\definecolor{ttttff}{rgb}{0.2,0.2,1.}
\definecolor{ttffcc}{rgb}{0.2,1.,0.8}
\definecolor{qqqqff}{rgb}{0.,0.,1.}
\definecolor{zzttqq}{rgb}{0.6,0.2,0.}
\definecolor{qqqqff}{rgb}{0.,0.,1.}
\def\rr{{\mathbb R}}
\def\rn{{\mathbb{R}^n}}
\def\zz{{\mathbb Z}}
\def\cc{{\mathbb C}}
\def\nn{{\mathbb N}}
\def\cj{{\mathcal J}}
\def\cl{{\mathcal L}}
\def\fz{\infty }
\def\az{\alpha}
\def\ez{\epsilon}
\def\sz{\sigma}
\def\ls{\lesssim}
\def\gs{\gtrsim}
\def\gfz{\genfrac{}{}{0pt}{}}
\def\loc{{\mathop\mathrm{\,loc\,}}}
\def\supp{\mathop\mathrm{\,supp\,}}
\def\az{\alpha}
\def\dint{\displaystyle\int}
\newtheorem{theorem}{Theorem}[section]
\newtheorem{lemma}[theorem]{Lemma}
\newtheorem{corollary}[theorem]{Corollary}
\newtheorem{proposition}[theorem]{Proposition}
\theoremstyle{definition}
\newtheorem{remark}[theorem]{Remark}
\newtheorem{definition}[theorem]{Definition}
\renewcommand{\appendix}{\par
   \setcounter{section}{0}%
   \setcounter{subsection}{0}%
   \setcounter{subsubsection}{0}%
   \gdef\thesection{\@Alph\c@section}%
   \gdef\thesubsection{\@Alph\c@section.\@arabic\c@subsection}%
   \gdef\theHsection{\@Alph\c@section.}%
   \gdef\theHsubsection{\@Alph\c@section.\@arabic\c@subsection}%
   \csname appendixmore\endcsname
 }
\numberwithin{equation}{section}
\definecolor{qqqqff}{rgb}{0,0,1}
\definecolor{ffqqqq}{rgb}{1,0,0}
\def\einf{\mathop{\mathrm{essinf}}}
\newcounter{rea}
\begin{document}
\title[Nontrivial nonnegative weak solutions to fractional $p$-Laplace
inequalities]
{Nontrivial nonnegative weak solutions \\
to fractional $p$-Laplace
inequalities}

\author{Liguang Liu}
\address{School of Mathematics,
	Renmin University of China,
	Beijing 100872, People's Republic of China}
\email{liuliguang@ruc.edu.cn}

\thanks{L. Liu is supported by the National Natural Science Foundation of China (\# 12371102).}

	\date{}
		
	\subjclass[2020]{Primary: 35B09;
Secondary: 35B53, 35A08, 35A23, 47G20.}
	
	\keywords{Liouville theorem, fractional $p$-Laplace operator, fractional Sobolev space, Riesz potential.}


\begin{abstract}
For the nonlocal quasilinear fractional
$p$-Laplace operator $(-\Delta)^s_p$ with $s\in (0,1)$ and $p\in(1,\infty)$,
we investigate the
nonexistence and existence of nontrivial nonnegative solutions $u$ in the local fractional Sobolev space $W_{\rm loc}^{s,p}(\mathbb R^n)$
that satisfies the inequality
$(-\Delta)^s_p u\ge u^q$
weakly in $\mathbb R^n$, where $q\in(0,\infty)$.
The approach taken in this paper is mainly based on some delicate analysis of the fundamental solutions to the fractional $p$-Laplace operator $(-\Delta)^s_p$.
\end{abstract}


\maketitle

\arraycolsep=1pt
\allowdisplaybreaks

\tableofcontents

\section{Introduction}

It is known that the classical Liouville
theorem says that bounded entire functions on the complex plane $\cc$ must be constant functions.
In the context of real analysis, Liouville theorem
 states that any real harmonic function $u$ on $\rn$ must be a constant if it is bounded either from above
or below, where harmonic means that $\Delta u=0$, with $\Delta=\sum_{j=1}^n\partial_{x_j}^2$ being the Laplace operator on $\rn$.
Suppose that $q\in(0,\infty)$.
Under $n>2$, Gidas and Spruck \cite{Gidas1980,GidasSpruck1981CPAM}
prove that
\begin{align}\label{eq-GS}
-\Delta u = u^q
\end{align}
has no nontrivial nonnegative solutions in $\rn$ provided that $q<\frac{n+2}{n-2}$.
For the differential inequality
\begin{align}\label{eq-NS}
-\Delta u \ge u^q,
\end{align}
Ni and Serrin \cite{NiSerrin1986CPAM} prove that under $n>2$ it has no nontrivial nonnegative solutions in $\rn$  if and only if $q\le \frac{n}{n-2}$.
Generalizations of these results to general differential inequalities or equations involved the
 $p$-Laplace operator $\Delta_p u:=-\mathrm{div}(|\nabla u|^{p-2}\nabla u)$ are due to Mitidieri and Pokhozhaev  \cite{MitidieriPokhozhaev1999, MitidieriPokhozhaev1998, MitidieriPokhozhaev2001}. Indeed,  the Lane-Emden $p$-Laplace equation
\begin{align}\label{eq-MP}
\Delta_p u = u^q
\end{align}
has no nontrivial nonnegative solutions in $\rn$ provided that either $p\ge n$ or $1<p<n$ and $q<\frac{n(p-1)+p}{n-p}$. Moreover, the differential inequality
\begin{align}\label{eq-SZ}
\Delta_p u 
\ge u^q
\end{align}
has no nontrivial nonnegative solutions in $\rn$ provided that either $p\ge n$ or $1<p<n$ and $q\le\frac{n(p-1)}{n-p}$. The above results can also be  found in the work of Serrin and Zou \cite{SerrinZou2002Acta}.
A key technique that is often used in these literature is to take a special test function of the form
$u^{-a}\psi^b$, with $a,b>0$ and $\psi$ being a smooth cutoff function.
In addition, there is another method of obtaining Liouville theorems involved the $p$-Laplace operator, which is mainly based on the comparison method and Harnack inequality; see, for example, \cite{CaristiDAmbrosioMitidieri2008,CaristiMitidieriPokhozhaev2009} or \cite{ArmstrongSirakov2011CPDE}.
For  related works on various Liouville type theorems for differential equations or inequalities of $p$-Laplace operator, on exterior domains, on the entire Euclidean space $\rn$ or on manifolds,
we refer the reader to  \cite{BidautPohozaev2001JAM, Grigoryan1985MS, GrigoryanSun2014CPAM, Sun2015PAMS, Sun2016PJM,
FilippucciSunZheng2024JAM, Filippucci2009NA} and the references therein.

Let us turn to the  nonlocal case. For any $s\in(0,1)$,  Wang and Xiao \cite{WangXiao2016CCM} (see also
\cite{WangXiao2016AIHP}) prove that the fractional order differential inequality
\begin{align*}
(-\Delta)^{s} u \ge u^q
\end{align*}
has no nontrivial nonnegative solutions in $\rn$ if and only if $q\le \frac{n}{n-2s}$, which is a
fractional  generalization of \cite{GidasSpruck1981CPAM, NiSerrin1986CPAM}.  The main approach taken in
\cite{WangXiao2016CCM} is to utilize
Caffarelli-Silvestre's extension technique (see \cite{CaffarelliSilvestre2007CPDE}) that reduces the {\it nonlocal} operator $(-\Delta)^{s}$ to a {\it local} differential operator on the upper half space.
For the pointwise fractional differential equation
\begin{align*}
(-\Delta)^{s} u = u^q
\end{align*}
on the upper half space $\mathbb R^n_+$ subject to the boundary condition $u=0$ on $\rn\setminus\mathbb R^n_+$,
Chen, Fang and Yang \cite{ChenFangYang2015AdvMath} prove that if $u$ is a nonnegative locally bounded solution then $u\equiv 0$ on $\rn$ provided that $q\le \frac{n+2s}{n-2s}$.

In accordance to the above discussions, the main aim of this paper is to study Liouville properties related to the nonlocal quasilinear {\it fractional $p$-Laplace operator}
\begin{equation}\label{eq-Delta-sp} (-\Delta)^s_pu(x)=\mathrm{p.v.}\int_{\rn}\frac{|u(x)-u(y)|^{p-2}(u(x)-u(y))}{|x-y|^{n+sp}}\,dy,
\end{equation}
where $s\in(0,1)$ and $p\in(1,\infty)$.
In this paper, we are interested in finding the ranges of $s,p$ and $q$ for which the fractional $p$-Laplace inequality
\begin{align}\label{eq-sp>=}
(-\Delta)^s_pu\ge u^q
\end{align}
has no nontrivial nonnegative weak solutions.
The suitable class of function spaces for the solutions $u$ are  the following fractional Sobolev spaces (see, for example, \cite{DiNezzaPalatucciValdinoci2012}).

\begin{definition} Let $\Omega\subseteq\rn$ be an open set. For any $s\in (0,1)$ and $p\in[1,\infty)$,  define the {\it fractional Sobolev space $W^{s,p}(\Omega)$} to be the space of all $f\in L^p(\Omega)$ such that
\begin{align*}
[f]_{W^{s,p}(\Omega)}=\left(\int_\Omega\int_\Omega \frac{|f(x)-f(y)|^p}{|x-y|^{n+sp}}\,dy\,dx\right)^\frac 1p<\infty.
\end{align*}
For any $f\in W^{s,p}(\Omega)$, define the norm
\begin{align*}
\|f\|_{W^{s,p}(\Omega)}:= \|f\|_{L^p(\Omega)}+[f]_{W^{s,p}(\Omega)}.
\end{align*}
Denote by $W_\loc^{s,p}(\Omega)$ the space of all functions $u$ such that $u\in W^{s,p}(D)$ for all open sets $D$ satisfying $D\Subset U$ (the closure $\overline D$ is a compact subset of $U$).
If $\Omega=\rn$, then we write $W^{s,p}(\rn)$
or $W_\loc^{s,p}(\rn)$.
\end{definition}

The fractional $p$-Laplace operator $(-\Delta)^s_p$  in \eqref{eq-Delta-sp}  has attracted lots of attention in analysis, partial differential equations and geometry; see, for example, \cite{DiNezzaPalatucciValdinoci2012, ShiXiao2017CVPDE,  KorvenpaaKuusiPalatucci2017MathAnn, KorvenpaaKuusiLindgren2019JMPA, DiCastroKuusiPalatucci2014JFA, KuusiMingioneSire2015APDE,KuusiMingioneSire2015CMP,
LindgrenLindqvist2014CVPDE,DiCastroKuusiPalatucci2016AIHP} and etc.
Indeed, this nonlocal nonlinear operator $(-\Delta)^s_p$  arises naturally when  computing the critical points of the fractional $W^{s,p}(\rn)$-energy
$$
\frac{d}{dt}\left( [u+t\phi]_{W^{s,p}(\rn)}^p\right)\bigg|_{t=0},
$$
where $\phi\in C_c^\infty(\rn)$, i.e., the space of infinitely differentiable functions on $\rn$ with compact support.
Obviously, $(-\Delta)^s_p$ extends the fractional Laplace operator $(-\Delta)^s$ since at the case  $p=2$ one has
$$(-\Delta)^s_2=(-\Delta)^s.$$
Moreover,  $(-\Delta)^s_p$
is  a natural generalization of the classical $p$-Laplace operator $\Delta_p$,
because $(-\Delta)^s_p$
connects the identity operator and  $\Delta_p$
in the following way:  for  functions $u\in C_c^\infty(\rn)$,
$$
\lim_{s\to 0_+} s \left\langle(-\Delta)^s_pu,\, u\right\rangle_{L^2(\rn)} = c_{p, n}\left\langle u,\, u\right\rangle_{L^2(\rn)}
$$
and
$$
\lim_{s\to 1_-}(1-s)\left\langle(-\Delta)^s_pu,\, u\right\rangle_{L^2(\rn)} = C_{p,n}\left\langle\Delta_p u,\, u\right\rangle_{L^2(\rn)},
$$
where $c_{p,n}$ and $C_{p,n}$ are
positive constants depending only on $p$ and $n$.
The latter two convergences are consequences of the following  well-known limiting behaviours
of fractional Sobolev functions (see \cite{MazyaShaposhnikova2002JFA} and \cite{BourgainBrezisMironescu2002JAM, BourgainBrezisMironescu2001}):
$$
\lim_{s\to 0_+} s[u]_{W^{s,p}(\rn)}^p =2c_{p,n}\|u\|_{L^p(\rn)}^p$$
and
$$
\lim_{s\to 1_-} (1-s)[u]_{W^{s,p}(\rn)}^p =2C_{p,n}\|\nabla u\|_{L^p(\rn)}^p.
$$
 Based on these discussions, we emphasize that \eqref{eq-sp>=} extends both \eqref{eq-NS} and \eqref{eq-SZ}.

Now, we state the  main result of this paper, which concerns nonexistence and existence  of weak local solutions to $(-\Delta)^s_pu\ge u^q$ in \eqref{eq-sp>=}.

\begin{theorem}
		\label{thm-Liouville}
Let $n\in\nn$, $s\in (0,1)$, $p\in (1,\infty)$ and $q\in(0,\infty)$. Assume that $0\le u\in W_\loc^{s,p}(\rn)$  satisfies the fractional $p$-Laplace inequality
		\begin{equation}
		\label{eq-p-Lapineq}
		(-\Delta)^s_pu\ge u^q\ \ \text{weakly in}\ \ \mathbb R^n,
		\end{equation}
that is, for all $0\le\phi\in C_c^\infty(\rn)$,
	\begin{equation}
		\label{eq-phi>} \frac 12\int_{\rn}\int_{\rn}\frac{|u(x)-u(y)|^{p-2}(u(x)-u(y))(\phi(x)-\phi(y))}{|x-y|^{n+sp}}\,dy\,dx\ge\int_{\mathbb R^n}u(x)^q\phi(x)\,dx.
\end{equation}
Then the following statements hold:
\begin{itemize}
\item[\rm (i)] If $sp\ge n$, then \eqref{eq-p-Lapineq} has only the trivial solution $u=0$ a.e. on $\mathbb R^n$.
					
\item[\rm (ii)] If $sp<n$ and $q\le\frac{n(p-1)}{n-sp}$, then \eqref{eq-p-Lapineq} has only the trivial solution $u=0$ a.e. on $\rn$.

\item[\rm (iii)] If $sp<n$ and $q>\frac{n(p-1)}{n-sp}$, then \eqref{eq-p-Lapineq} has a nontrivial positive pointwise solution on $\rn$.
\end{itemize}
	
	\end{theorem}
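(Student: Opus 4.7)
My plan is to organize the proof around the pointwise identity
$$(-\Delta)^s_p \bigl(|\cdot|^{-\alpha}\bigr)(x) = C(n,s,p,\alpha)\, |x|^{-\alpha(p-1)-sp}, \qquad x \in \rn \setminus \{0\},$$
which I would establish first by direct principal-value evaluation of \eqref{eq-Delta-sp} on radial homogeneous profiles. The constant $C(n,s,p,\alpha)$ should be strictly positive precisely when $0 < \alpha < (n-sp)/(p-1)$. This identity identifies the critical exponent $q^* = n(p-1)/(n-sp)$: the radial ``power solution'' $u(x) = c|x|^{-\alpha}$ satisfies $(-\Delta)^s_p u \ge u^q$ iff $\alpha(p-1)+sp \le \alpha q$ (equivalently $\alpha \ge sp/(q-p+1)$) together with $0<\alpha < (n-sp)/(p-1)$, and both can be simultaneously met iff $q > q^*$.

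For part (iii), I take such an $\alpha$, set $u(x) = c|x|^{-\alpha}$ for $|x| \ge 1$, and smoothly interpolate to a positive constant on a neighborhood of the origin to produce a representative in $W^{s,p}_{\loc}(\rn)$; the smoothing error is absorbed by choosing $c$ sufficiently small. The weak inequality \eqref{eq-phi>} then follows from the pointwise identity above together with Fubini applied to the symmetrized double integral.

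For parts (i) and (ii), I argue by contradiction. Suppose $u \not\equiv 0$ is a weak solution of \eqref{eq-p-Lapineq}. Let $\eta_R$ be a standard smooth cutoff with $\eta_R \equiv 1$ on $B_R$, $\supp \eta_R \subset B_{2R}$ and $\|\nabla \eta_R\|_\infty \lesssim 1/R$. Testing \eqref{eq-phi>} against $\phi = \eta_R^{\mu}$ for a large integer $\mu$, and applying H\"older's inequality with conjugate exponents $p$ and $p/(p-1)$ to the double integral, together with the pointwise bound $|\eta_R(x)-\eta_R(y)| \lesssim \min(|x-y|/R,1)$ and the scaling of $[\eta_R^\mu]_{W^{s,p}(\rn)}$, one obtains an estimate of the form
$$\int_{\rn} u^q\,\eta_R^\mu \, dx \;\le\; C\, R^{\gamma(n,s,p,q)},$$
with the exponent $\gamma(n,s,p,q)$ dictated by the fundamental-solution scaling encoded in the identity above and strictly negative exactly when either $sp \ge n$, or $sp < n$ and $q < q^*$. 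Sending $R \to \infty$ then forces $u \equiv 0$ in those regimes.

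The main obstacle is the borderline case $q = q^*$ in (ii), where $\gamma = 0$ and the cutoff estimate only delivers global $L^q$-boundedness of $u$ rather than vanishing. I would close this case via a bootstrap, using Caccioppoli-type self-improvement implicit in \eqref{eq-p-Lapineq} to upgrade $L^q$-integrability to $L^{q+\varepsilon}$ for some $\varepsilon>0$; an iteration then shifts the effective scaling into the strictly subcritical regime where the previous estimate already forces $u \equiv 0$. A secondary technical difficulty, very much tied to the nonlinearity $p \ne 2$, is the control of the global tail contribution $\int_{\rn \setminus B_{2R}} u(y)^{p-1}/|y|^{n+sp}\,dy$ to the double integral: since the kernel does not decouple into a Riesz potential plus a smooth remainder as in the case $p=2$, one needs carefully tuned H\"older estimates on the annuli $B_{2^{k+1}R}\setminus B_{2^k R}$ that respect the degree-$\alpha$ scaling of the fundamental solution.
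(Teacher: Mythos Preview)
Your scheme for parts (i)--(ii) has a genuine gap at the very first step.  Testing \eqref{eq-phi>} with $\phi=\eta_R^\mu$ and applying H\"older with exponents $p$ and $p'$ to the double integral produces a factor $[u]_{W^{s,p}}^{p-1}$ (or, if you localize, $[u]_{W^{s,p}(B_{2R})}^{p-1}$ plus the tail you mention), and nothing in \eqref{eq-phi>} gives you a priori control of that seminorm.  The Mitidieri--Pokhozhaev machinery you are invoking works in the local case because one can test with $u^{-a}\psi^b$, which turns $|\nabla u|^{p-1}$ into $|\nabla u|^p$ and yields a Caccioppoli inequality; for a nonlocal \emph{inequality} (as opposed to an equation) there is no obvious nonlocal analogue of that trick, and a plain cutoff does not close the loop.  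The paper takes a different route: it first proves (Theorem~\ref{thm-harnack}) the estimate
\[
\int_{\rn}\int_{\rn}\frac{|u(x)-u(y)|^{p-1}|\phi_r(x)-\phi_r(y)|}{|x-y|^{n+sp}}\,dy\,dx \;\le\; C\,r^{n-sp}\Bigl(\einf_{2B}u\Bigr)^{p-1},
\]
by combining the nonlocal weak Harnack inequality of Di~Castro--Kuusi--Palatucci with the fractional Caccioppoli estimate of Kuusi--Mingione--Sire (Lemmas~\ref{lem-Harnack} and~\ref{lem-hanack2}).  This replaces your uncontrolled seminorm by $(\einf u)^{p-1}$, and the scaling argument you sketch then goes through cleanly in the strict subcritical range; the tail you flag as a ``secondary difficulty'' is in fact absorbed by the same Harnack inequality applied on dyadic annuli.

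Your plan for the critical exponent $q=\frac{n(p-1)}{n-sp}$ is also off target.  A Caccioppoli bootstrap from $L^q$ to $L^{q+\varepsilon}$ is not available here: the inequality gives you information about $(-\Delta)^s_p u$ from below, not about $[u]_{W^{s,p}}$, so there is no self-improvement to iterate.  The paper instead proves (Theorem~\ref{thm-mr}) via comparison with the fundamental solution of Theorem~\ref{thm-solution} that any nontrivial supersolution satisfies $u(x)\ge |x|^{\frac{sp-n}{p-1}}$ for large $|x|$; since the cutoff argument already yields $u\in L^q(\rn)$ at criticality, this lower bound forces $\int_{|x|\ge R_0}|x|^{-n}\,dx<\infty$, a contradiction.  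For part~(iii) your idea is close in spirit, but ``absorbing the smoothing error by taking $c$ small'' only works once you know $(-\Delta)^s_p u>0$ near the origin after smoothing; scaling does not fix a sign.  The paper sidesteps this by working with the globally smooth perturbation $f(x)=(1+|x|^2)^{\frac{sp-n+\sigma}{2(p-1)}}$ and verifying $(-\Delta)^s_p f>0$ pointwise on all of $\rn$ (Lemma~\ref{lem-Deltaspf>0}).
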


\begin{remark} Let us give  three comments on Theorem \ref{thm-Liouville}.

\begin{enumerate}
\item[\rm (i)]
Theorem \ref{thm-Liouville} not only generalizes, but also recovers the classical Liouville theorems for both the classical Laplace operator $\Delta$  as studied in \cite{Gidas1980, GidasSpruck1981CPAM, NiSerrin1986CPAM} and
and  the classical $p$-Laplace operator $\Delta_p$ as explored in  \cite{MitidieriPokhozhaev1999, MitidieriPokhozhaev1998, SerrinZou2002Acta}. It is important to note that Theorem \ref{thm-Liouville},
with  $p=2$ therein,
also recovers the Liouville theorem for the fractional Laplace operator $(-\Delta)^s$,
as established by Wang and Xiao \cite{WangXiao2016AIHP, WangXiao2016CCM}.

\item[\rm (ii)]
As a consequence of Theorem \ref{thm-Liouville},
if $0\le u\in W_\loc^{s,p}(\rn)$  satisfies the fractional $p$-Laplace
equation
\begin{align}\label{eq-sp=}
(-\Delta)^s_pu= u^q \ \ \text{weakly in}\ \ \mathbb R^n,
\end{align}
 that is, for all $0\le\phi\in C_c^\infty(\rn)$,
		\begin{equation*}
	 \frac12\int_{\rn}\int_{\rn}\frac{|u(x)-u(y)|^{p-2}(u(x)-u(y))
(\phi(x)-\phi(y))}{|x-y|^{n+sp}}\,dy\,dx=\int_{\mathbb R^n}u(x)^q\phi(x)\,dx,
	\end{equation*}
then $u=0$ a.e. on $\mathbb R^n$ provided that  either $sp\ge n$ or $sp<n$ and $q\le \frac{n(p-1)}{n-sp}$.
Based on the relations in \eqref{eq-GS} and \eqref{eq-MP}, it is expected that when
$$
\frac{n(p-1)}{n-sp} <q< \frac{np}{n-sp}-1,
$$
any nonnegative function $u\in W_\loc^{s,p}(\rn)$  that weakly solves the equation \eqref{eq-sp=} must be identically zero almost everywhere on $\rn$.
It is both interesting and much more involved  (see, e.g., \cite{Zou1995DIE, SerrinZou2002Acta}) to investigate the behavior of weak solutions to  \eqref{eq-sp=} when the exponent exceeds the critical value, as this  may give rise to significant  phenomena.
However, these topics are beyond the scope of the current paper and will be explored in future work.


\item[\rm (iii)]
Another way to extend the
$p$-Laplace operator $\Delta_p$
  to the fractional setting is to define, for
$s\in(0,1)$, the following version of the fractional $p$-Laplace operator
\begin{align*}
 \cl_{s,p}u:=- \mathrm{div}^s (|\nabla^s u|^{p-2}\nabla^s u),
\end{align*}
where $\nabla^s:= \nabla (-\Delta)^\frac {s-1}2$ is the {\it fractional gradient operator}, and $\mathrm{div}^s:=\mathrm{div}(-\Delta)^{\frac{s-1}{2}}$ is  the
{\it fractional divergence operator}.
If $p=2$, then
$$\cl_{s,2}u=- \mathrm{div}^s (\nabla^s u)=(-\Delta)^{s}u.$$
If $s=1$, then  $\nabla^s=\nabla $ and $\mathrm{div}^s =\mathrm{div}$, which gives
$$\cl_{1,p}u=-\mathrm{div}(|\nabla u|^{p-2}\nabla u)=\Delta_p.$$
Thus $\cl_{s,p}$  serves as a generalization of both the local quasilinear
$p$-Laplace operator $\Delta_p$
and the nonlocal linear fractional operator $(-\Delta)^{s}$.
Related studies on $\cl_{s,p}$
  in the context of PDEs can be found in \cite{SchikorraShiehSpector2018CCM, ShiehSpector2015ACV, ShiehSpector2018ACV}. It is proved in \cite[Theorem 1.5]{LiuSunXiao2024MathAnn} that, under the condition
  $p\ge \frac{2n-s}{n}$ and some integrability assumptions on $\nabla^su$,  the fractional  differential inequality
\begin{align*}
\cl_{s,p} u \ge u^q
\end{align*}
has no nontrivial nonnegative weak solutions in $W^{s,p}(\rn)$ if and only if  either $sp\ge n$ or $sp<n$ and $q\le\frac{n(p-1)}{n-sp}$. The integrability assumption on $\nabla^su$ can not be
dropped  because comparison principle is not known for $\cl_{s,p}$.
In comparison, Theorem \ref{thm-Liouville}(ii) does not require any additional integrability assumptions on $u$.
\end{enumerate}
\end{remark}

The key ingredient in the proof of Theorem \ref{thm-Liouville} is the subsequently presented fundamental solution to  the fractional $p$-Laplace operator $(-\Delta)^s_p$.

\begin{theorem}\label{thm-solution}
		Let $n\in\nn$, $s\in(0,1)$, $p\in (1, \infty)$ such that $sp\le n$. For any $x\in\rn\setminus\{0\}$, define
		\begin{equation}
		\label{eq-u0}
		u_{0}(x)=\begin{cases}
		|x|^\frac{sp-n}{p-1}&\ \ \text{as}\ \ sp<n;\\
		\ln\frac 1{|x|}&\ \ \text{as}\ \ sp=n.
		\end{cases}
		\end{equation}
Denote by $\delta_0$  the Dirac measure at the origin of $\rn$.		Then there exists a constant  $c_{n,s,p}^\ast$  such that
\begin{equation}
		\label{eq-U}
u=\begin{cases}
(c_{n,s,p}^\ast)^{-\frac1{p-1}} u_{0}\quad&\text{as}\ \ c_{n,s,p}^\ast\ge0;\\
-(-c_{n,s,p}^\ast)^{-\frac1{p-1}} u_{0}\quad&\text{as}\ \ c_{n,s,p}^\ast<0,
\end{cases}
\end{equation}
is a solution to
		\begin{equation*}
		(-\Delta)^s_pu=\delta_0\quad \text{weakly in}\ \rn
\end{equation*}
in the following sense:\, for all $\phi\in C_c^\infty(\rn)$,
	\begin{equation*}		\frac12\int_{\rn}\int_{\rn}\frac{|u(x)-u(y)|^{p-2}(u(x)-u(y))
(\phi(x)-\phi(y))}{|x-y|^{n+sp}}\,dy\,dx=\phi(0).
		\end{equation*}
Indeed, the constant 	$c_{n,s,p}^\ast$ is given by
\begin{equation}\label{eq-cnsp}
c_{n,s,p}^\ast=  \frac12 \int_{\rn}\int_{\rn}\frac{|u_0(x)-u_0(y)|^{p-2}(u_0(x)-u_0(y))\left(|x-{\bf e}_1|^{s-n}-|y-{\bf e}_1|^{s-n}\right)}{|x-y|^{n+sp}} \,dy\,dx.
\end{equation}
\end{theorem}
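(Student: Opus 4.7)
The plan is to show that the distribution $(-\Delta)^s_p u_0$ is proportional to the Dirac mass $\delta_0$ with proportionality constant $c_{n,s,p}^\ast$, and then to solve an elementary scalar equation to normalize the multiplier so that $(-\Delta)^s_p u=\delta_0$.

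First, I would establish the pointwise identity $(-\Delta)^s_p u_0(x)=0$ for every $x\in\rn\setminus\{0\}$. Since $u_0$ is radial and $(-\Delta)^s_p$ commutes with rotations, it is enough to treat $x={\bf e}_1$. The natural device is the Kelvin-type inversion $y\mapsto \hat y:=y/|y|^2$, which fixes ${\bf e}_1$ and satisfies
\begin{align*}
|{\bf e}_1-\hat y|=\frac{|{\bf e}_1-y|}{|y|}\quad\text{and}\quad d\hat y=\frac{dy}{|y|^{2n}}.
\end{align*}
Combined with $|\hat y|^\alpha=|y|^{-\alpha}$ in the power case $sp<n$ (with $\alpha=(sp-n)/(p-1)$), or with $\ln|\hat y|=-\ln|y|$ in the logarithmic case $sp=n$, performing the change of variables $y=\hat z/|\hat z|^2$ in the principal-value integral defining $(-\Delta)^s_p u_0({\bf e}_1)$ produces an extra factor $|\hat z|^{-\alpha(p-1)+sp-n}$, which collapses to $1$ precisely because the exponent in \eqref{eq-u0} is calibrated so that $\alpha(p-1)=sp-n$. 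A careful sign tally then shows that the involution carries the integrand to its negative, forcing the integral to vanish.

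Next, the scaling identity
\begin{align*}
(-\Delta)^s_p[u(\lambda\,\cdot)](x)=\lambda^{sp}\,(-\Delta)^s_p u(\lambda x),\qquad \lambda>0,
\end{align*}
combined with the homogeneity $u_0(\lambda x)=\lambda^\alpha u_0(x)$ (or the additive identity $u_0(\lambda x)=u_0(x)-\ln\lambda$ together with the invariance of $(-\Delta)^s_p$ under additive constants), yields $(-\Delta)^s_p u_0(\lambda x)=\lambda^{-n}(-\Delta)^s_p u_0(x)$. Combined with the pointwise vanishing off the origin and rotation invariance, the distribution $(-\Delta)^s_p u_0$ is supported at $\{0\}$, rotationally invariant, and homogeneous of degree $-n$, so it must equal $c_{n,s,p}^\ast\,\delta_0$ for a unique real constant $c_{n,s,p}^\ast$. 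To identify this constant, I would pair formally with the Riesz-type function $\phi(x)=|x-{\bf e}_1|^{s-n}$, for which $\phi(0)=1$: the right-hand side of the weak formulation then produces $c_{n,s,p}^\ast$, while the left-hand side is exactly the double integral appearing in \eqref{eq-cnsp}. Since $\phi\notin C_c^\infty(\rn)$, the pairing must be justified by inserting a radial cutoff $\chi(\cdot/R)$ with $\chi\in C_c^\infty(\rn)$ and $\chi\equiv 1$ on $B_1$, writing $\langle(-\Delta)^s_p u_0,\,\phi\,\chi(\cdot/R)\rangle=c_{n,s,p}^\ast$ for each $R$, and sending $R\to\infty$.

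The last step is algebraic: from $(-\Delta)^s_p(\mu u_0)=|\mu|^{p-2}\mu\,c_{n,s,p}^\ast\,\delta_0$ for every real $\mu$, the equation $(-\Delta)^s_p u=\delta_0$ reduces to the scalar identity $|\mu|^{p-2}\mu\,c_{n,s,p}^\ast=1$, whose real solution is a positive root when $c_{n,s,p}^\ast\geq 0$ and a negative root when $c_{n,s,p}^\ast<0$, matching \eqref{eq-U} exactly. The main technical obstacle will be the absolute convergence of the double integral in \eqref{eq-cnsp} and the passage to the limit in the cutoff procedure. This requires delicate estimates on the integrand near the diagonal $\{x=y\}$ (using the Lipschitz behavior of $\phi$ and $u_0$ away from their singularities), near each of the isolated singularities in $\{0,{\bf e}_1\}$ (using the explicit power or logarithmic asymptotics), and at infinity (using the joint decay of $u_0(x)$ and $|x-{\bf e}_1|^{s-n}$ against the kernel $|x-y|^{-(n+sp)}$).
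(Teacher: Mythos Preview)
Your overall strategy is sound and genuinely different from the paper's. The paper never computes $(-\Delta)^s_p u_0$ pointwise; instead it writes, for any $\phi\in C_c^\infty(\rn)$,
\[
\phi(x)-\phi(y)=c_{n,s}\int_{\rn}\bigl(|x-z|^{s-n}-|y-z|^{s-n}\bigr)\,(-\Delta)^{s/2}\phi(z)\,dz
\]
via the Riesz potential identity $\phi=\mathcal I_s(-\Delta)^{s/2}\phi$, inserts this into the weak formulation, and uses Fubini (justified by the absolute-convergence estimate of Proposition~\ref{prop1}) together with the scaling/rotation symmetry of $u_0$ to identify the inner double integral as $2c_{n,s,p}^\ast\,|z|^{s-n}$; the remaining $z$-integral collapses back to $\phi(0)$. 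Your route---Kelvin inversion for the pointwise vanishing on $\rn\setminus\{0\}$, then support-and-homogeneity to pin down $c\,\delta_0$---is more structural, and the inversion calculation is a nice alternative; both approaches ultimately rest on the same delicate convergence estimate (the paper's Proposition~\ref{prop1}).

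There is, however, a concrete gap in your identification step. The test function $\phi(x)=|x-{\bf e}_1|^{s-n}$ is singular at ${\bf e}_1$, not only at infinity, so $\phi\,\chi(\cdot/R)$ is still not in $C_c^\infty(\rn)$ and you cannot directly evaluate $\langle(-\Delta)^s_p u_0,\,\phi\,\chi(\cdot/R)\rangle=c\,\phi(0)$ from the distributional identity you have established. You must simultaneously regularize near ${\bf e}_1$ (e.g.\ replace $\phi$ by $\phi_\epsilon(x)=(|x-{\bf e}_1|^2+\epsilon^2)^{(s-n)/2}$ or mollify) and then justify the double limit $\epsilon\to0$, $R\to\infty$ in the weak formulation. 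That passage requires a dominated-convergence argument controlling $|\phi_\epsilon(x)-\phi_\epsilon(y)|$ uniformly in $\epsilon$ by an integrable majorant---precisely the kind of splitting into diagonal, near-$\{0,{\bf e}_1\}$, and far regions that you list at the end, and precisely what Proposition~\ref{prop1} supplies for the limiting integrand. The paper's Riesz-potential trick sidesteps this approximation entirely by embedding the singular kernel $|x-z|^{s-n}$ inside a representation of a smooth $\phi$ from the outset.
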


The paper is organized as follows. In Section \ref{ss-solution}, we prove Theorem \ref{thm-solution} through extensive technical computations and delicate estimates. Section \ref{sec3} focuses on the proofs of parts (i) and (ii) of Theorem \ref{thm-Liouville}. The main strategies used are Theorem \ref{thm-harnack}
(to treat the cases $sp\ge n$ or $sp<n$ and $q<\frac{n(p-1)}{n-sp}$) and  Theorem \ref{thm-mr} (to treat the case $sp<n$ and $q=\frac{n(p-1)}{n-sp}$),
both of which are established in Section \ref{ss3.3}. Then (i) and (ii) of Theorem \ref{thm-Liouville} are  proved in Section \ref{ss3.4}. In Section \ref{sec5}, we establish Theorem \ref{thm-Liouville}(iii) by applying perturbation and smoothness techniques to the fundamental solution from Theorem \ref{thm-solution}.


\medskip

\noindent{\bf Notation.\,}
Throughout the paper, we adopt the following notation and notions:

\begin{itemize}

\item Let  $\mathbb{N}=\{1,2,\cdots\}$, $\zz=\{0,\pm1,\pm2,\dots\}$ and $\mathbb{Z}_{+}=\mathbb{N}\cup\{0\}$.

\item  For any multi-index $\nu=(\nu_{1},\dots,\nu_{n})\in\mathbb{Z}^{n}_{+}$, denote by $|\nu|=\nu_1+\cdots+\nu_n$ and write $D^\nu=\partial^{\nu_{1}}_{1}\cdots\partial^{\nu_{n}}_{n}$, where $\partial_{j}=\partial_{x_{j}}=\frac{\partial}{\partial x_{j}}$ for any $j\in\{1,\dots,n\}$.

\item For any $j\in\{1,2,\dots,n\}$, denote by ${\bf e}_j$ the unit vector in $\rn$ whose $j$-th entry is $1$ but all other entries are zero.

  \item Denote by $\nu_n$ the Lebesgue measure of the unit ball in $\rn$. Let $\omega_{n-1}=n\nu_n$ be the  surface measure of the unit ball in $\rn$.

\item $C^\infty(\Omega)$ denotes the  space of all infinitely differentiable functions on $\Omega$;
$C_c^\infty(\Omega)$ denotes the  space of functions in $C^\infty(\Omega)$
with compact support contained in $\Omega$.

\item For any subset $E$ of $\rn$, denote by $\overline E$ and $\mathring E$ the closure and interior of $E$, respectively. Meanwhile, set $E^c:=\rn\setminus E$.

\item For any subset $E$ of $\rn$ with finite Lebesgue measure and for any locally integrable function $f$ on $\rn$, we use the notation
    $$\fint_E f(x)\,dx:=\frac1{|E|}\int_E  f(x)\,dx.$$

\item For any $r\in (0,\infty)$, let  $\mathbb B_r:=\{x\in\rn:\, |x|< r\}$ and $\overline{\mathbb B}_r:=\{x\in\rn:\, |x|\le r\}$.

\item For any two open sets $U$ and $V$ in $\rn$, the symbol $U\Subset V$ means that
$\overline U$ is compact and $\overline U\subseteq V$.

\item Set $u_+:=\max\{u,\,0\}$ and $u_-:=\max\{-u,\,0\}$.

  \item The letters $C$ and $c$ are used to denote positive constants that are independent of the
variables in question, but may vary at each occurrence.
The symbol $u\ls v$ (resp., $u \gs v$) means that $u \le Cv$
(resp., $u \ge Cv$) for a positive constant $C$ independent of the main parameters involved. We
write $u \simeq     v$ if $u \ls  v \ls  u$.

\end{itemize}

\section{Fundamental solution}\label{ss-solution}

This section is devoted to the proof of Theorem \ref{thm-solution}.
In Section \ref{ss2.1}, we demonstrate that the action of $(-\Delta)^s_p$ on
functions $u$ with sufficiently good regularity and integrability (for instance,
the fundamental solution  away from the origin) can be continuous. Then, in
Section \ref{ss2.2}, we establish a very delicate estimate for $u_0$ in
\eqref{eq-u0} (see Proposition \ref{prop1} below). 
Finally, in Section \ref{ss2.3}, 
we provide the proof of Theorem \ref{thm-solution}. 

\subsection{Behaviour of the fractional $p$-Laplace operator}\label{ss2.1}

Let us start with the definitions of tail spaces and H\"older spaces.

\begin{definition}
 Let $s\in (0,1)$ and $p\in(1,\infty)$. Define the   {\it tail space}
\begin{align}\label{eq-Lsp}
L_{sp}^{p-1}(\rn):=\left\{
u\in L_\loc^{p-1}(\rn):\ \||u\||_{L_{sp}^{p-1}(\rn)}:=\int_\rn \frac{|u(x)|^{p-1}}{(1+|x|)^{n+sp}}\,dx<\infty
\right\}.
\end{align}
In particular,  $L^\infty(\rn)\subseteq L_{sp}^{p-1}(\rn)$ and $L^{p}(\rn)\subseteq L_{sp}^{p-1}(\rn)$.
\end{definition}

\begin{definition}
  Let $\Omega$ be an open subset of $\rn$. For any $k\in\zz_+$ and $\sz\in[0,1]$,
denote by $C^{k,\,\sz}(\Omega)$ the \emph{H\"{o}lder
space} consisting of all functions $f:\ \Omega\to\rr$ such that $f$ has  continuous partial derivatives up to order $k$ with
$$
\|f\|_{C^{k,\,\sz}(\Omega)}:=\sum_{\{\nu\in\zz_+^n:\,|\nu|\le k\}}\|D^\nu f\|_{L^\fz(\Omega)}+\sum_{\{\nu\in\zz_+^n:\,|\nu|= k\}}\|D^\nu f\|_{C^\sz(\Omega)}<\infty,
$$
where for a continuous function $g:\ \Omega\to\rr$ define
$$\|g\|_{C^\sz(\Omega)}:=\sup_{x,\,y\in \Omega,\,x\neq y}\frac {|g(x)-g(y)|}{|x-y|^{\sz}}.$$
We write $C^{k,\,0}(\Omega)$ simply as $C^{k}(\Omega)$.
Also, we  write $C^{0,\,\sigma}(\Omega)$ simply as $C^{\sigma}(\Omega)$.
\end{definition}

The following estimate follows easily from an application of the mean value theorem to the function $h(t):=|t|^{p-2}t$ on $\rr$ (see, for instance, \cite[(57)]{ChenLi2018AdvMath}).

\begin{lemma}
  Let $p\in[1,\infty)$. Then there exists a positive constant $C$ such that for any $A,B\in\rn$,
  \begin{align}\label{eq-ChenLi57}
  \left||A+B|^{p-2}(A+B)-|A|^{p-2}A\right|\le C (|A|+|B|)^{p-2} |B|.
  \end{align}
\end{lemma}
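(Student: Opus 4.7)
\emph{Plan of proof.} The plan is to apply the fundamental theorem of calculus to the vector-valued function $F(t):=|A+tB|^{p-2}(A+tB)$ on $[0,1]$, which is the natural vector analogue of applying the mean value theorem to $h(t)=|t|^{p-2}t$ on $\rr$. A direct differentiation gives
\[F'(t)=(p-2)|A+tB|^{p-4}\bigl((A+tB)\cdot B\bigr)(A+tB)+|A+tB|^{p-2}B,\]
from which the Cauchy--Schwarz inequality yields the pointwise bound $|F'(t)|\le C(p)\,|A+tB|^{p-2}|B|$ whenever $A+tB\ne 0$. Writing
\[|A+B|^{p-2}(A+B)-|A|^{p-2}A=F(1)-F(0)=\int_0^1 F'(t)\,dt,\]
the problem is reduced to controlling $\int_0^1|A+tB|^{p-2}\,dt$ by a multiple of $(|A|+|B|)^{p-2}$.

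When $p\ge 2$, the exponent $p-2$ is nonnegative, so the elementary bound $|A+tB|\le |A|+|B|$ together with the monotonicity of $r\mapsto r^{p-2}$ immediately gives $|A+tB|^{p-2}\le (|A|+|B|)^{p-2}$, and \eqref{eq-ChenLi57} follows at once.

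The delicate case is $1\le p<2$, for then $|A+tB|^{p-2}$ blows up when $A+tB$ is close to the origin, and the scalar mean value theorem applied to $h$ produces an intermediate point with the ``wrong sign'' of the exponent. I would remedy this by splitting into subcases according to the relative sizes of $|A|$ and $|B|$. If $2|B|\le|A|$, then for every $t\in[0,1]$ the reverse triangle inequality yields $|A+tB|\ge |A|/2$, and since $|A|+|B|\simeq |A|$ in this regime, one obtains $|A+tB|^{p-2}\le C(|A|+|B|)^{p-2}$ and the integral argument above concludes. If instead $|A|<2|B|$, then $|A|+|B|\simeq |B|$, and one estimates the left-hand side directly by the triangle inequality
\[\bigl||A+B|^{p-2}(A+B)-|A|^{p-2}A\bigr|\le |A+B|^{p-1}+|A|^{p-1}\le 2(|A|+|B|)^{p-1},\]
which in turn is bounded by $C(|A|+|B|)^{p-2}|B|$ because $|A|+|B|\simeq |B|$.

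The main obstacle is precisely the singularity of $X\mapsto|X|^{p-2}$ at the origin in the range $p<2$: no global pointwise estimate on $F'$ is available, since the segment from $A$ to $A+B$ may pass arbitrarily close to $0$, and the naive integral estimate can therefore fail. The case distinction above circumvents this by either keeping the line segment bounded away from $0$ (when $|B|$ is small relative to $|A|$) or abandoning the integral representation in favor of separate triangle-inequality bounds on the two terms (when $|A|$ is at most comparable to $|B|$). The boundary cases $A=0$ or $A+B=0$ are then handled by continuity, yielding the same quantitative constant.
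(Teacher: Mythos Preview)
Your argument is correct and is precisely the natural elaboration of what the paper indicates: the paper does not give a detailed proof but simply notes that the estimate follows from the mean value theorem applied to $h(t)=|t|^{p-2}t$ (citing \cite[(57)]{ChenLi2018AdvMath}), and your use of the fundamental theorem of calculus on $F(t)=|A+tB|^{p-2}(A+tB)$ is the vector version of that idea. Your additional case split for $1\le p<2$ (segment bounded away from the origin when $2|B|\le|A|$, and direct triangle-inequality estimate when $|A|<2|B|$) is exactly the care needed to make the hinted mean-value argument rigorous in that range, so the two approaches coincide.
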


Now, we consider the behaviour  of $(-\Delta)^s_p u$ when the function $u$ has some regularity
and integrability properties.

\begin{proposition}\label{prop-ChenLi}
 Let $s\in(0,1)$, $p\in(1,\infty)$  and  $\sigma\in (0,1]$  such that
 $$\sigma>1-(1-s)p.$$
If $u\in L_{sp}^{p-1}(\rn)\cap C^{1,\,\sigma}(B(x_0,\delta_0))$ for some $\delta_0\in(0,\infty)$, then
	\begin{align}\label{eq-finite}
(-\Delta)^s_pu(x_0)
=\mathrm{p.v.}\int_{\rn}\frac{|u(x_0)-u(y)|^{p-2}(u(x_0)-u(y))}{|x_0-y|^{n+sp}}\,dy<\infty,
\end{align}
and the function $(-\Delta)^s_pu$ is continuous at the  point $x_0\in\rn$.
 \end{proposition}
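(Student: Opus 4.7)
The plan is to split the defining principal-value integral at a small radius $\delta\in(0,\delta_0/2)$ into a \emph{local} piece over $B(x_0,\delta)$ (interpreted in the principal-value sense) and a \emph{tail} piece over $\{y\in\rn:|y-x_0|\ge\delta\}$, which I expect to converge absolutely. For the tail piece, use the elementary bound $|u(x_0)-u(y)|^{p-1}\ls|u(x_0)|^{p-1}+|u(y)|^{p-1}$, observe that $|u(x_0)|$ is finite by the $C^{1,\sigma}(B(x_0,\delta_0))$ hypothesis, and note that $1+|y|\le C_{x_0,\delta}\,|x_0-y|$ whenever $|x_0-y|\ge\delta$. This reduces the tail piece to a finite multiple of $\||u\||_{L_{sp}^{p-1}(\rn)}$ plus a similarly finite constant.

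For the local piece, I would use the expansion $u(y)=u(x_0)+\nabla u(x_0)\cdot(y-x_0)+\rho(x_0,y)$ with $|\rho(x_0,y)|\le C\|u\|_{C^{1,\sigma}(B(x_0,\delta_0))}|y-x_0|^{1+\sigma}$, together with the symmetrization $y\leftrightarrow 2x_0-y$ (equivalently $v\mapsto -v$ after setting $v=y-x_0$). Writing $\varphi(t)=|t|^{p-2}t$, $a(v)=\nabla u(x_0)\cdot v$, $b(v)=\rho(x_0,x_0+v)$ and $b'(v)=\rho(x_0,x_0-v)$, the symmetrized integrand equals $\tfrac{1}{2}\bigl(\varphi(a(v)-b'(v))-\varphi(a(v)+b(v))\bigr)|v|^{-n-sp}$, in which the antisymmetric singular contribution $\varphi(a(v))|v|^{-n-sp}$ has cancelled exactly. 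The estimate \eqref{eq-ChenLi57} then bounds the remaining difference by $C(|a|+|b|+|b'|)^{p-2}(|b|+|b'|)\ls|v|^{p-1+\sigma}$ on $\{|v|<\delta\}$, and division by $|v|^{n+sp}$ produces an integrable majorant precisely when $p-1+\sigma>sp$, equivalently when $\sigma>1-(1-s)p$. This yields \eqref{eq-finite}.

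Continuity of $(-\Delta)^s_pu$ at $x_0$ then follows by replicating the above estimates with $x_0$ replaced by any $x\in B(x_0,\delta_0/4)$; the constants in both pieces depend only on $\|u\|_{C^{1,\sigma}(B(x_0,\delta_0))}$, $\||u\||_{L_{sp}^{p-1}(\rn)}$, $|x|$ and $\delta$, hence are uniformly bounded for $x$ close to $x_0$. Dominated convergence applied to the tail piece, combined with pointwise continuity in $x$ of the symmetrized local integrand under a uniform $|v|^{p-1+\sigma-n-sp}$ majorant, then gives $\lim_{x\to x_0}(-\Delta)^s_pu(x)=(-\Delta)^s_pu(x_0)$. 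The main obstacle I foresee is the local piece: the crude bound $|u(x_0)-u(y)|^{p-1}\ls|y-x_0|^{p-1}$ is integrable against $|y-x_0|^{-n-sp}$ only when $s<1-1/p$, so the symmetrization is essential to extract the cancellation that is encoded in the principal value and to reach the full range $s\in(0,1)$ under the sharp condition $\sigma>1-(1-s)p$.
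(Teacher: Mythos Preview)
Your proposal is correct and follows essentially the same approach as the paper. The only cosmetic difference is in how you extract the cancellation: you symmetrize via $v\leftrightarrow -v$ to obtain $\tfrac{1}{2}\bigl(\varphi(a(v)-b'(v))-\varphi(a(v)+b(v))\bigr)$, whereas the paper subtracts the odd term $h\bigl((x-y)\cdot\nabla u(x)\bigr)$ directly (noting that its principal-value integral vanishes by symmetry) and then bounds $h(A+B)-h(A)$ via \eqref{eq-ChenLi57}; both routes produce the same majorant $|v|^{p-1+\sigma-n-sp}$ and the same integrability threshold $\sigma>1-(1-s)p$, and both finish continuity by making the local piece uniformly small in $x$ near $x_0$ and applying dominated convergence on the tail.
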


\begin{proof}
Note that \eqref{eq-finite} has essentially been proven in \cite[Lemma~5.2]{ChenLi2018AdvMath}.  To show that $(-\Delta)^s_pu$ is continuous at $x_0$, we fix $x\in\rn$ such that $|x-x_0|<\delta_0/4$.   To simplify the notation, upon setting
\begin{align}\label{eq-h}
h(t):=|t|^{p-2}t\quad \text{for all}\ \, t\in\rr,
\end{align}
we write
\begin{align*}
 &\mathrm{p.v.}\int_{\rn}\frac{|u(x)-u(y)|^{p-2}(u(x)-u(y))}{|x-y|^{n+sp}}\,dy= \mathrm{p.v.}\int_{\rn}\frac{h\big(u(x)-u(y)\big)}{|x-y|^{n+sp}}\,dy.
 \end{align*}

For any $\eta\in(0,\,\delta_0/4)$, by symmetry, we have
\begin{align*}
&\mathrm{p.v.}\int_{|y-x|<\eta}\frac{h\big((x-y)\cdot\nabla u(x)\big)}{|x-y|^{n+sp}}\,dy\\
&\quad=\mathrm{p.v.}\int_{|y-x|<\eta}\frac{|(x-y)\cdot\nabla u(x)|^{p-2}((x-y)\cdot\nabla u(x))}{|x-y|^{n+sp}}\,dy=0,
\end{align*}
which induces
\begin{align*}
 &\mathrm{p.v.}\int_{|y-x|<\eta}\frac{h\big(u(x)-u(y)\big)}{|x-y|^{n+sp}}\,dy\\
 &\quad = \mathrm{p.v.}\int_{|y-x|<\eta}\frac{h\big(u(x)-u(y)\big)-h\big((x-y)\cdot\nabla u(x)\big)}{|x-y|^{n+sp}}\,dy.
\end{align*}
Since $u\in C^{1,\,\sigma}(B(x_0, \delta_0))$, it follows from the Taylor expansion that
for all $x,y\in B(x_0, \delta_0)$,
\begin{align*}
  u(y)-u(x)=(y-x)\cdot\nabla u(x)+O(|x-y|^{1+\sigma}).
\end{align*}
So, if $|y-x|<\eta<\delta_0/4$ and $|x-x_0|<\delta_0/4$ (which implies $|y-x|<\delta_0/2$),  then we apply \eqref{eq-ChenLi57} with
$$A:=(x-y)\cdot\nabla u(x)$$
and
$$B:=u(x)-u(y)-(x-y)\cdot\nabla u(x)=O(|x-y|^{1+\sigma}),$$
which leads to
\begin{align*}
&\left|h\big(u(x)-u(y)\big)-h\big((x-y)\cdot\nabla u(x)\big)\right|\\
&\quad=|h(B+A)-h(A)|\\
&\quad\le C (|A|+|B|)^{p-2} |B|\\
  &\quad\le C'\left||(x-y)\cdot\nabla u(x)|+|x-y|^{1+\sigma}\right|^{p-2}|x-y|^{1+\sigma}\\
  &\quad\le C'' |x-y|^{p+\sigma-1},
\end{align*}
where the constants $C,C',C''$ are independent of $x,y$, but may  depend on the harmless constants $\|u\|_{C^{1,\sigma}(B(x_0,\,\delta_0/2))}$ and $\|\nabla u\|_{L^\infty(B(x_0,\,\delta_0/2))}$.
This, combined with the assumption $(1-s)p+\sigma-1>0$, further derives
\begin{align*}
 \left|\mathrm{p.v.}\int_{|y-x|<\eta}\frac{h\big(u(x)-u(y)\big)}{|x-y|^{n+sp}}\,dy\right|
 &\le C''\int_{|y-x|<\eta}|x-y|^{(1-s)p+\sigma-1-n}\,dy\\
 & = C''\omega_{n-1}\int_0^\eta \rho^{(1-s)p+\sigma-2}\,d\rho\\
 &=C''\omega_{n-1}((1-s)p+\sigma-1)^{-1}\eta^{(1-s)p+\sigma-1}.
 \end{align*}
 So, for any given $\epsilon>0$, we may choose some $\eta\in (0, \delta_0/4)$ sufficiently small to achieve that
\begin{align}\label{eq-eta1}
 \left|\mathrm{p.v.}\int_{|y-x|<\eta}\frac{h\big(u(x)-u(y)\big)}{|x-y|^{n+sp}}\,dy\right|<\epsilon
\end{align}
uniformly in $x\in B(x_0,\,\delta_0/4)$.

Fix such a number $\eta$ that ensures \eqref{eq-eta1}. Suppose now $|x-x_0|<\eta/2$.
By  the continuity of $u$ on $B(x_0, \delta_0)$, we  have
\begin{align*}
\left|h\big(u(x)-u(y)\big)\right| \le (|u(x)|+|u(y)|)^{p-1} \le (\|u\|_{L^\infty(B(x_0,\,\delta_0/2))}+|u(y)|)^{p-1}.
\end{align*}
If $|y-x|\ge\eta$, then it follows from $|x-x_0|<\eta/2<1$ that
\begin{align*}
\frac{1+|y|}{1+|x_0|}\le 1+|y-x_0|\le 1+|x_0-x|+|x-y|<2+|x-y| \le \left(\frac 4{\eta}+1\right)|x-y|
\end{align*}
and, hence,
$$
\frac{\left|h\big(u(x)-u(y)\big)\right|}{|x-y|^{n+sp}} \mathbf 1_{B(x,\,\eta)^c}(y)\le C \frac{1+|u(y)|^{p-1}}{1+|y|^{n+sp}},
$$
with $C$ being a positive constant depending only on $\eta, s, p, n$ and $\|u\|_{L^\infty(B(x_0,\,\delta_0/2))}$. This, together with \eqref{eq-Lsp}, the continuity of $u$, and the Lebesgue dominated convergence theorem, further induces
\begin{align*}
&\lim_{x\to x_0}\int_{|y-x|\ge\eta}\frac{h\big(u(x)-u(y)\big)}{|x-y|^{n+sp}}\,dy
= \int_{|y-x_0|\ge \eta}\frac{h\big(u(x_0)-u(y)\big)}{|x_0-y|^{n+sp}}\,dy.
\end{align*}
Due to this last convergence, for any $\epsilon>0$ we can find some $\delta\in (0, \eta/2)$ such that when $|x-x_0|<\delta$,
\begin{align}\label{eq-Iconvg}
\left|\int_{|y-x|\ge\eta}\frac{h\big(u(x)-u(y)\big)}{|x-y|^{n+sp}}\,dy
- \int_{|y-x_0|\ge \eta}\frac{h\big(u(x_0)-u(y)\big)}{|x_0-y|^{n+sp}}\,dy\right|<\epsilon.
\end{align}

Finally, for any $\epsilon>0$, combining \eqref{eq-eta1} and \eqref{eq-Iconvg} yields that there exists $\delta\in (0,1)$ such that for all $x\in\rn$ satisfying $|x-x_0|<\delta$,
\begin{align*}
  &|(-\Delta)^s_pu(x)-(-\Delta)^s_pu(x_0)|\\
  &\quad =\left|\mathrm{p.v.}\int_{\rn}\frac{h\big(u(x)-u(y)\big)}{|x-y|^{n+sp}}\,dy
  -\mathrm{p.v.}\int_{\rn}\frac{h\big(u(x_0)-u(y)\big)}{|x_0-y|^{n+sp}}\,dy\right|\\
  &\quad\le \left|\mathrm{p.v.}\int_{|y-x|<\eta}\frac{h\big(u(x)-u(y)\big)}{|x-y|^{n+sp}}\,dy\right|\\
  &\qquad +\left|\mathrm{p.v.}\int_{|y-x_0|<\eta}\frac{h\big(u(x_0)-u(y)\big)}{|x_0-y|^{n+sp}}\,dy\right|\\
  &\qquad
  +\left|\int_{|y-x|\ge\eta}\frac{h\big(u(x)-u(y)\big)}{|x-y|^{n+sp}}\,dy
- \int_{|y-x_0|\ge \eta}\frac{h\big(u(x_0)-u(y)\big)}{|x_0-y|^{n+sp}}\,dy\right|\\
  &\quad<3\epsilon.
\end{align*}
This proves the continuity of $(-\Delta)^s_pu$ at the point $x_0\in\rn$.
\end{proof}

\begin{remark}
As a consequence of Proposition \ref{prop-ChenLi}, we see that for the function $u_0$ in \eqref{eq-u0}, the function $(-\Delta)^s_p u_0$ is finite and continuous on $\rn\setminus\{0\}$.
\end{remark}

\subsection{A technical proposition}\label{ss2.2}

The following two simple estimates will be used throughout the paper.

\begin{lemma}\label{lem1}
Let $n\in\nn$  and $\alpha\in (0,n)$. Then for any compact set $K\subseteq\rn$, $R\in(0,\infty)$ and $x\in\rn$,
\begin{align}\label{eq-basic}
  \int_K  |x-y|^{\alpha-n}\,dy\le \alpha^{-1} \omega_{n-1} \left(\frac{|K|}{\nu_n}\right)^{\alpha/n}.
\end{align}
and
\begin{align}\label{eq-basic2}
  \int_{|x-y|\ge R}  |x-y|^{-\alpha-n}\,dy\le \alpha^{-1} \omega_{n-1} R^{-\alpha},
\end{align}
where $\nu_n$ and $\omega_{n-1}$ denote the Lebesgue measure and the surface measure of the unit ball in $\rn$, respectively.
\end{lemma}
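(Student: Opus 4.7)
The plan is to handle the two inequalities separately, with the second being a direct polar-coordinate computation and the first being a rearrangement argument reducing the problem to the same kind of polar integral.

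For \eqref{eq-basic2}, I would simply pass to polar coordinates centered at $x$. Since the integrand is radial,
\begin{align*}
\int_{|x-y|\ge R} |x-y|^{-\alpha-n}\,dy
=\omega_{n-1}\int_R^{\infty}\rho^{-\alpha-n}\rho^{n-1}\,d\rho
=\omega_{n-1}\int_R^{\infty}\rho^{-\alpha-1}\,d\rho
=\alpha^{-1}\omega_{n-1}R^{-\alpha},
\end{align*}
which gives the stated bound (with equality, in fact). The hypothesis $\alpha\in(0,n)$ ensures that the exponent $-\alpha-1$ is less than $-1$, so the radial integral converges.

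For \eqref{eq-basic}, the idea is a symmetrization (bathtub/rearrangement) step: since $\alpha-n<0$, the function $y\mapsto |x-y|^{\alpha-n}$ is radially decreasing about $x$, so concentrating the domain of integration as close to $x$ as possible only increases the integral. Concretely, I would let $r:=(|K|/\nu_n)^{1/n}$, so that the ball $\mathbb B_r(x):=\{y:|y-x|<r\}$ has the same Lebesgue measure as $K$. Writing $K_1:=K\cap\mathbb B_r(x)$, $K_2:=K\setminus\mathbb B_r(x)$, and $E:=\mathbb B_r(x)\setminus K$, the equality $|K_1|+|K_2|=|K|=|\mathbb B_r(x)|=|K_1|+|E|$ yields $|K_2|=|E|$. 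On $K_2$ we have $|x-y|^{\alpha-n}\le r^{\alpha-n}$, while on $E$ we have $|x-y|^{\alpha-n}\ge r^{\alpha-n}$, so
\begin{align*}
\int_{K_2}|x-y|^{\alpha-n}\,dy
\le r^{\alpha-n}|K_2|
= r^{\alpha-n}|E|
\le \int_E |x-y|^{\alpha-n}\,dy.
\end{align*}
Adding $\int_{K_1}|x-y|^{\alpha-n}\,dy$ to both sides gives
$\int_K|x-y|^{\alpha-n}\,dy\le \int_{\mathbb B_r(x)}|x-y|^{\alpha-n}\,dy$, and the latter integral evaluates by polar coordinates to $\alpha^{-1}\omega_{n-1}r^\alpha=\alpha^{-1}\omega_{n-1}(|K|/\nu_n)^{\alpha/n}$, as required.

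Neither step is truly an obstacle; the only mild subtlety is checking integrability in the ball $\mathbb B_r(x)$ for \eqref{eq-basic}, which again uses $\alpha-n>-n$, i.e.\ $\alpha>0$, so that $\int_0^r\rho^{\alpha-1}\,d\rho$ converges. I would write out both computations explicitly so that the constants $\alpha^{-1}\omega_{n-1}$ appearing in the statement are transparent.
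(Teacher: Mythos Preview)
Your proof is correct. For \eqref{eq-basic2} you do exactly what the paper does: a direct polar-coordinate computation giving equality. For \eqref{eq-basic}, however, your route is genuinely different. The paper writes $|x-y|^{\alpha-n}=(n-\alpha)\int_{|x-y|}^{\infty}t^{\alpha-n-1}\,dt$, applies Fubini, bounds the resulting level-set measure by $\min\{\nu_n t^n,\,|K|\}$, and splits the $t$-integral at $(|K|/\nu_n)^{1/n}$; this is the layer-cake approach. You instead use a bathtub/rearrangement argument: replace $K$ by the ball of equal volume centered at $x$ (which can only increase the integral of a radially decreasing function), then integrate over that ball in polar coordinates. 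Both methods yield the same sharp constant $\alpha^{-1}\omega_{n-1}(|K|/\nu_n)^{\alpha/n}$; your argument is arguably more geometric and transparent, while the paper's layer-cake computation generalizes more readily to settings where one wants to track level sets. One tiny remark: for \eqref{eq-basic2} convergence only needs $\alpha>0$, not the full range $\alpha\in(0,n)$.
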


\begin{proof}
To obtain \eqref{eq-basic}, for any $x\in\rn$, applying  the Fubini theorem yields
\begin{align*}
\int_{K}  |x-y|^{\alpha-n}\,dy
&= (n-\alpha)\int_K \int_{|x-y|}^\infty   t^{\alpha-n-1}\,dt\,dy\\
&=(n-\alpha) \int_0^\infty \left(\int_{\{x\in K:\, |x-y|\le t\}} \,dy\right) t^{\alpha-n-1}\,dt\notag\\
&\le (n-\alpha) \int_0^\infty  \min\left\{\nu_nt^n,\, |K|\right\}   t^{\alpha-n-1}\,dt\notag\\
&= (n-\alpha)\left(\nu_n
\int_0^{(|K|/\nu_n)^{1/n}}  t^{\alpha-1}\,dt
+ |K|\int_{(|K|/\nu_n)^{1/n}}^\infty  t^{\alpha-n-1}\,dt\right)\\
&= n\alpha^{-1} \nu_n \left(\frac{|K|}{\nu_n}\right)^{\alpha/n}.\notag
\end{align*}
Moreover, observe that \eqref{eq-basic2} follows simply from
\begin{align*}
\int_{|x-y|\ge R}  |x-y|^{-\alpha-n}\,dy
&=\omega_{n-1}\int_R^\infty \rho^{-\alpha-1}\,d\rho =\omega_{n-1}\alpha^{-1}R^{-\alpha}.
\end{align*}
\end{proof}

The main goal of this section is to establish the following technical estimate.

\begin{proposition}\label{prop1}
  Let $s\in(0,1)$, $p\in (1, n/s]$ and $u_0$ be as in \eqref{eq-u0}. Then
  $$
\cj:= \int_{\rn} \int_{\rn}\frac{|u_0(x)-u_0(y)|^{p-1}}{|x-y|^{n+sp}}\left||x-{\bf e}_1|^{s-n}-|y-{\bf e}_1|^{s-n}\right|\,dy\,dx<\infty.
$$
Consequently, the value $c_{n,s,p}^\ast$ in \eqref{eq-cnsp} is a finite number.
\end{proposition}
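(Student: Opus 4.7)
The goal is to show $\cj<\infty$; finiteness of $c_{n,s,p}^\ast$ then follows from $|c_{n,s,p}^\ast|\le\tfrac12\cj$. I focus on the case $sp<n$ and write $\alpha:=(sp-n)/(p-1)<0$, so that $u_0(x)=|x|^{\alpha}$; the borderline case $sp=n$ will be handled by the same scheme, replacing the mean value bound $||x|^{\alpha}-|y|^{\alpha}|\lesssim |x-y|\,(\max\{|x|,|y|\})^{\alpha-1}$ with $|\log(|x|/|y|)|\lesssim |x-y|/\min\{|x|,|y|\}$. Set $\varphi(z):=|z-{\bf e}_1|^{s-n}$. Then $u_0\in C^{\infty}(\rn\setminus\{0\})$, $\varphi\in C^{\infty}(\rn\setminus\{{\bf e}_1\})$, both decay at infinity, and the integrand $F(x,y):=|u_0(x)-u_0(y)|^{p-1}|\varphi(x)-\varphi(y)||x-y|^{-n-sp}$ is symmetric in $x\leftrightarrow y$.

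The approach is a geometric partition with cutoff $\rho(x):=\tfrac12\min\{|x|,|x-{\bf e}_1|\}$, splitting $\cj=\cj_{\mathrm{near}}+\cj_{\mathrm{far}}$ according to $|x-y|<\rho(x)$ versus $|x-y|\ge\rho(x)$. On $\cj_{\mathrm{near}}$ the ball $B(x,\rho(x))$ avoids $\{0,{\bf e}_1\}$, so the mean value theorem yields
\begin{equation*}
|u_0(x)-u_0(y)|^{p-1}\lesssim |x|^{(\alpha-1)(p-1)}|x-y|^{p-1}\quad\text{and}\quad |\varphi(x)-\varphi(y)|\lesssim |x-{\bf e}_1|^{s-n-1}|x-y|.
\end{equation*}
Integrating $|x-y|^{(1-s)p-n}$ in $y$ over $B(x,\rho(x))$ via Lemma \ref{lem1} (legitimate since $(1-s)p>0$) produces a factor $\rho(x)^{(1-s)p}$, reducing to
\begin{equation*}
\cj_{\mathrm{near}}\lesssim \int_{\rn}|x|^{sp-n-p+1}|x-{\bf e}_1|^{s-n-1}\rho(x)^{(1-s)p}\,dx.
\end{equation*}
This is finite by elementary power counting on the subregions $\{|x|\le\tfrac14\}$ (integrand $\simeq |x|^{1-n}$), $\{|x-{\bf e}_1|\le\tfrac14\}$ (exponent $s-1+(1-s)p=(1-s)(p-1)>0$ since $p>1$), at infinity (integrand $\simeq |x|^{s-2n}$, integrable since $s<n$), and the bounded intermediate region.

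For $\cj_{\mathrm{far}}$ the mean value approach is unavailable. I use the crude bounds
\begin{equation*}
|u_0(x)-u_0(y)|^{p-1}\lesssim u_0(x)^{p-1}+u_0(y)^{p-1}\quad\text{and}\quad|\varphi(x)-\varphi(y)|\le\varphi(x)+\varphi(y),
\end{equation*}
expanding into four bilinear terms. A naive application fails: for $|x|$ small, the term with $u_0(x)^{p-1}\varphi(x)$ produces an $|x|^{-n}$-type behavior after $y$-integration, which is not locally integrable at $0$. The saving observation is that $\varphi$ is smooth on a neighborhood of $0$ while $u_0$ is smooth on a neighborhood of ${\bf e}_1$. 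For each of the four bilinear terms, I further subdivide the $y$-domain into $\{|y|\le\tfrac12\}$, $\{|y-{\bf e}_1|\le\tfrac12\}$, and the remainder, and in the critical subregions upgrade the crude bound to a mean value estimate on whichever of $u_0$ or $\varphi$ is smooth there. Each resulting integral is then controlled by \eqref{eq-basic} and \eqref{eq-basic2}, and a case-by-case check of exponents gives finiteness.

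The main obstacle is precisely the far-from-diagonal regime: the triangle-inequality bounds destroy cancellation at the singularities, and recovering integrability near $0$ and ${\bf e}_1$ requires carefully exploiting the smoothness of the complementary function in each subregion. Once the subdivision is in place, every individual piece reduces to routine power counting via Lemma \ref{lem1}.
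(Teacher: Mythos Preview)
Your near-diagonal piece $\cj_{\mathrm{near}}$ is correct and coincides, up to harmless constants in the cutoff, with the paper's region $\Omega_1$ (Step~1). The reduction to $\int_{\rn}|x|^{sp-n-p+1}|x-{\bf e}_1|^{s-n-1}\rho(x)^{(1-s)p}\,dx$ and the power counting near $0$, near ${\bf e}_1$, and at infinity are exactly what the paper does.

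The far-diagonal sketch, however, is where the real work lies, and your description is both imprecise and organized differently from the paper in a way that hides genuine difficulty. First, the phrasing ``expand into four bilinear terms \ldots\ and in the critical subregions upgrade the crude bound to a mean value estimate'' is logically backwards: once you have replaced $|u_0(x)-u_0(y)|^{p-1}$ by $u_0(x)^{p-1}+u_0(y)^{p-1}$ you cannot recover a difference. What you must do---and what the paper does---is subdivide \emph{first} and then decide, per subregion, which factor gets the mean value theorem and which gets the crude bound. Second, your proposed subdivision by the \emph{location of $y$} alone is not the natural one: the availability of the mean value estimate on, say, $\varphi$ depends on whether $|x-y|$ is small relative to $|x-{\bf e}_1|$ (so that the segment from $x$ to $y$ stays away from ${\bf e}_1$), not merely on where $y$ sits. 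The paper accordingly splits the far region by the \emph{relative size} of $|x-y|$ against $|x|$ and $|x-{\bf e}_1|$ separately, producing three further regions $\Omega_2,\Omega_3,\Omega_4$, and each of these already requires nontrivial case analysis (e.g.\ the dichotomy $(1-s)p\gtrless 1$ in $\Omega_2$, and a further three-way split of $\Omega_4$ in Step~4). Your statement that ``each resulting integral is then controlled by \eqref{eq-basic} and \eqref{eq-basic2}, and a case-by-case check of exponents gives finiteness'' is therefore hiding precisely the substance of the argument.

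Finally, the borderline case $sp=n$ is not disposed of by ``the same scheme'': the paper treats it separately in Steps~3 and~5, where the logarithmic growth of $u_0$ forces genuinely different integrals (e.g.\ $\int(\ln(|y|/|x|))^{p-1}\cdots$) and a distinct case analysis. Your one-line claim that replacing the power bound by $|\log(|x|/|y|)|\lesssim |x-y|/\min\{|x|,|y|\}$ suffices is unsubstantiated.
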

\begin{proof}
Due to symmetry, we consider only the integrand of $\cj$ over   $$\left\{(x,y)\in\rn\times\rn:\,|x|\le |y|\right\}.$$
Set
$$
\begin{cases}
  \Omega_1:=   \{(x,y)\in\rn\times\rn:\,|x|\le |y|,\, |x-y|\le |x|/4,\, |x-y|\le |x-{\bf e}_1|/2\};\\
  \Omega_2:=   \{(x,y)\in\rn\times\rn:\,|x|\le |y|,\, |x-y|\le |x|/4,\, |x-y|> |x-{\bf e}_1|/2\};\\
  \Omega_3:=   \{(x,y)\in\rn\times\rn:\,|x|\le |y|,\, |x-y|> |x|/4,\, |x-y|\le |x-{\bf e}_1|/9\};\\
  \Omega_4:=   \{(x,y)\in\rn\times\rn:\,|x|\le |y|,\, |x-y|> |x|/4,\, |x-y|> |x-{\bf e}_1|/9\}.
\end{cases}
$$
For $i\in\{1,2,3,4\}$, denote by $\cj_i$ the corresponding integral
of the integrand of $\cj$ over  $\Omega_i$.
It suffices to show that each $\cj_i$ is finite.

\medskip

{\bf  Step 1:\, verification of $\cj_1<\infty$.\,}
If $sp<n$, then  the mean value theorem implies that there exists some $\theta\in (0,1)$ such that for any $t_1,t_2\in (0,\infty)$,
$$
\left|t_1^{\frac{sp-n}{p-1}}-t_2^{\frac{sp-n}{p-1}}\right|
=\frac{sp-n}{p-1}|t_1-t_2| \left|t_1+\theta(t_2-t_1)\right|^{\frac{sp-n}{p-1}-1}
\le \frac{sp-n}{p-1}|t_1-t_2| \min\{t_1,\,t_2\}^{\frac{sp-n}{p-1}-1}.
$$
Under the case $sp=n$, in a similar manner we
obtain that for any $t_1,t_2\in (0,\infty)$,
$$
\left|\ln t_1-\ln t_2\right|
=|t_1-t_2| \left|t_1+\theta(t_2-t_1)\right|^{-1}
\le |t_1-t_2| \min\{t_1,\,t_2\}^{-1}.
$$
Note that $|x-y|\le |x|/4$ implies
 $|x|/2\le |y|\le 2|x|.$
Thus, when  $|x-y|\le |x|/4$, no matter $sp<n$ or $sp=n$, we always have
\begin{align}\label{eq-u0xy}
 |u_0(x)-u_0(y)|\ls \big||x|-|y|\big|\, \min\{|x|,\, |y|\}^{\frac{sp-n}{p-1}-1}
 \ls |x-y|\, |x|^{\frac{sp-n}{p-1}-1}.
\end{align}
Similarly,   the mean value theorem also implies that when $|x-y|\le |x-{\bf e}_1|/2$,
\begin{align}\label{eq-Isxy}
 \left||x-{\bf e}_1|^{s-n}-|y-{\bf e}_1|^{s-n}\right|
 \ls |x-y|\, |x-{\bf e}_1|^{s-n-1}.
\end{align}
An application of \eqref{eq-u0xy} and \eqref{eq-Isxy} gives us that
\begin{align*}
  \cj_1 &\ls \iint_{\Omega_1} \frac{|x-y|^{p-1} |x|^{sp-n-(p-1)}}{|x-y|^{n+sp}} |x-y||x-{\bf e}_1|^{s-n-1}\,dy\,dx \\
  & \ls  \iint_{|x-y|\le\min\{|x|/4,\,|x-{\bf e}_1|/2\}} |x-y|^{(1-s)p-n} |x|^{sp-n-p+1} |x-{\bf e}_1|^{s-n-1}\,dy\,dx.
\end{align*}
Integrating in the variable $y$ and using \eqref{eq-basic}, we obtain
$$
\int_{|x-y|\le\min\{|x|/4,\,|x-{\bf e}_1|/2\}} |x-y|^{(1-s)p-n} \,dy\ls \left( \min\left\{\frac{|x|}4,\,\frac{|x-{\bf e}_1|}2\right\}\right)^{(1-s)p}.
$$
Consequently, we have
\begin{align*}
  \cj_1 &\ls  \int_{\rn}\left( \min\left\{\frac{|x|}4,\,\frac{|x-{\bf e}_1|}2\right\}\right)^{(1-s)p}|x|^{sp-n-p+1} |x-{\bf e}_1|^{s-n-1}\,dx \\
   &\simeq  \left(\int_{|x|\le 1/2}
   +
   \int_{1/2<|x|<2}
   +\int_{|x|\ge 2}\right) \cdots\\
   & \ls \int_{|x|\le 1/2} |x|^{1-n}\,dx
   + \int_{1/2<|x|<2} |x-{\bf e}_1|^{(1-s)(p-1)-n}\,dx
   + \int_{|x|\ge 2} |x|^{s-2n}\,dx\\
   &\ls 1,
\end{align*}
as desired.

\medskip

{\bf  Step 2:\, verification of $\cj_2<\infty$.\,}
For any $(x,y)\in \Omega_2$, we still have $|x-y|\le |x|/4$, so that
\eqref{eq-u0xy} remains valid, which implies
\begin{align*}
  \cj_2 & \ls  \iint_{\Omega_2} |x-y|^{(1-s)p-n-1} |x|^{sp-n-(p-1)}
  \left(|x-{\bf e}_1|^{s-n}+|y-{\bf e}_1|^{s-n} \right)\,dy\,dx.
\end{align*}

If $(x,y)\in\Omega_2$, then
$|x-{\bf e}_1|/2< |x-y|\le |x|/4$, which implies
$$
|x|\le |x-{\bf e}_1|+|{\bf e}_1|<|x|/4+1
$$
and, hence,
$|x|\le 4/3.$
Meanwhile, from $|x-{\bf e}_1|/2<  |x|/4$, it also follows that
$$
1=|{\bf e}_1|\le |{\bf e}_1-x|+|x|<3|x|/2.
$$
Combining these facts yields  $1/2< |x|<2$.  As was mentioned in {\bf Step 1}, the condition $|x-y|\le |x|/4$ implies $|x|/2\le |y|\le 2|x|$, so that $1/4<|y|< 4$. By $|x-y|>|x-{\bf e}_1|/2$, there is
$$
|y-{\bf e}_1|\le |y-x|+|x-{\bf e}_1|< 3|x-y|.
$$
Altogether, we see that any $(x,y)\in\Omega_2$ satisfies
$$
1/4<|x|<4,\quad 1/4<  |y|< 4\quad \text{and}\quad 3|x-y|> \max\{|x-{\bf e}_1|,\, |y-{\bf e}_1| \}.
$$
So we can continue with the estimate of $\cj_2$ with
\begin{align*}
  \cj_2 & \ls   \iint_{\gfz{1/4< |x|<4,\ 1/4<|y|< 4}{3|x-y|>\max\{|x-{\bf e}_1|,\ |y-{\bf e}_1| \}}} |x-y|^{(1-s)p-n-1}
  \left(|x-{\bf e}_1|^{s-n}+|y-{\bf e}_1|^{s-n} \right)\,dy\,dx.
\end{align*}
Due to symmetry in $x$ and $y$, the estimate of $\cj_2$ falls into the following:
$$
\cj_2\ls \iint_{\gfz{1/4< |x|<4,\ 1/4<|y|< 4}{|x-y|>|x-{\bf e}_1|/3}} |x-y|^{(1-s)p-n-1}
 |x-{\bf e}_1|^{s-n}\,dy\,dx
$$
Starting from here, we will show that $\cj_2\ls 1$ by considering the cases $(1-s)p>1$ and $(1-s)p\le 1$, respectively.

\begin{itemize}
  \item[--] {\it The case $(1-s)p>1$.} Since $(1-s)p-1>0$, it follows directly from \eqref{eq-basic} that
  $$
 \int_{1/4<|y|< 4} |x-y|^{(1-s)p-n-1}\,dy \ls 1
  $$
  holds uniformly in $1/4< |x|<4$,
  which further implies
  $$
  \cj_2\ls \int_{|x|<4  } |x-{\bf e}_1|^{s-n}\,dx\ls 1
  $$
  by using \eqref{eq-basic} again.

  \item[--] {\it The case $(1-s)p\le 1$.}  In this case,  we take a small value $\sigma$ such that $0<\sigma< (1-s)(p-1).$
Then,
$$s+[(1-s)p-\sigma-1]= (1-s)(p-1)-\sigma>0.$$
Note that $(1-s)p-\sigma-1\le -\sigma <0$ and $|x-y|>|x-{\bf e}_1|/3$, we have
$$
|x-y|^{(1-s)p-n-1}= |x-y|^{\sigma-n}|x-y|^{(1-s)p-\sigma-1}\ls |x-y|^{\sigma-n}|x-{\bf e}_1|^{(1-s)p-\sigma-1}.
$$
So, applying \eqref{eq-basic}  twice yields
\begin{align*}
  \cj_2 &\ls \int_{1/4< |x|<4}\int_{1/4<|y|< 4}  |x-y|^{\sigma-n} |x-{\bf e}_1|^{s+[(1-s)p-\sigma-1]-n}\,dy\,dx\\
  &\ls  \int_{1/4< |x|<4}|x-{\bf e}_1|^{(1-s)(p-1)-\sigma-n}\,dx\\
  &\ls 1.
\end{align*}
\end{itemize}

\medskip

{\bf  Step 3:\, verification of $\cj_3<\infty$.\,}
For any $(x,y)\in\Omega_3$,  we claim that
\begin{align}\label{eq-xy-range3}
|x|< \frac 45, \quad  |y|< 1\quad \text{and}\quad |x-y|<\frac 15< |x-{\bf e}_1|< 2.
\end{align}
Indeed, by the fact $|x|/4<|x-y|\le |x-{\bf e}_1|/9$, we have
$$
|x|<\frac {4|{\bf e}_1-x|} 9\le \frac {4(|{\bf e}_1|+|x|)}9 =\frac {4(1+|x|)}9 ,
$$
which implies
$$
\frac 54 |x|< \frac 49 \ \ \Leftrightarrow\ \ |x|< \frac 45.
$$
This last fact further implies
$$
\frac 15\le |{\bf e}_1|-|x|\le  |{\bf e}_1-x|\le |{\bf e}_1|+|x|< 1+\frac45<2
$$
and
$$|y-x|\le \frac{|x-{\bf e}_1|}9 \le \frac{|x|+1} 9< \frac 15$$
and, hence,
$$|y|\le |y-x|+|x|<\frac 1 5+\frac 45= 1.$$
Altogether, we obtain \eqref{eq-xy-range3}.

Note that \eqref{eq-Isxy} remains to be true whenever $|x-y|\le |x-{\bf e}_1|/9$.
Thus,
\begin{align*}
  \cj_3 & \ls \iint_{\Omega_3} \frac{|u_0(x)-u_0(y)|^{p-1}}{|x-y|^{n+sp}} |x-y||x-{\bf e}_1|^{s-n-1}\,dy\,dx\\
  &\simeq  \iint_{\Omega_3}|u_0(x)-u_0(y)|^{p-1} |x-y|^{1-sp-n}\,dy\,dx,
\end{align*}
where in the second step we used $|x-{\bf e}_1|\simeq  1$ (see \eqref{eq-xy-range3}).
Next, instead of \eqref{eq-u0xy}, we will apply
\begin{align}\label{eq-u0x+y}
|u_0(x)-u_0(y)|
\le \begin{cases}
      |x|^{\frac{sp-n}{p-1}}\quad & \textup{when}\ sp<n \ \textup{and}\ |x|\le |y|;\\
     \ln \frac{|y|}{|x|}\quad & \textup{when}\ sp=n \ \textup{and}\ |x|\le |y|.
    \end{cases}
\end{align}
With \eqref{eq-xy-range3} and \eqref{eq-u0x+y}, let us show  $\cj_3\ls 1$ by considering the cases $sp<n$ and $sp=n$, respectively.

\begin{itemize}
  \item[--] {\it The case $sp<n$.\,}
  In this case, from \eqref{eq-xy-range3} and the first estimate of \eqref{eq-u0x+y}, it follows that
  \begin{align*}
  \cj_3
  &\ls \int_{|x|<4/5} \int_{|x|/4\le |x-y|<1/5} |x|^{sp-n}|x-y|^{1-sp-n}\,dy\,dx\\
  &\ls
  \begin{cases}
    \dint_{|x|<4/5} |x|^{sp-n}\,dx \quad &\text{as}\ sp<1;\\
    \dint_{|x|<4/5} |x|^{sp-n} \ln \frac4{5|x|} \,dx \quad &\text{as}\ sp=1;\\
    \dint_{|x|<4/5} |x|^{sp-n} |x|^{1-sp}\,dx\quad &\text{as}\ sp>1;
  \end{cases}\\
  &\ls 1,
  \end{align*}
as desired.

  \item[--] {\it The case $sp=n$.\,} If $|x|\le |y|\le 2|x|$, then
  $$|u_0(x)-u_0(y)|=\ln \frac{|y|}{|x|}\le \ln 2,$$
  so that
  \begin{align}\label{eq1-j3}
   &\iint_{\{(x,y)\in\Omega_3:\ |y|\le 2|x|\}}|u_0(x)-u_0(y)|^{p-1} |x-y|^{1-sp-n}\,dy\,dx \\
    &\quad\ls \int_{|x|<4/5} \int_{|x|/4\le |x-y|<1/5} |x-y|^{1-2n}\,dy\,dx \notag\\
    &\quad\ls
  \begin{cases}
       \dint_{|x|<4/5}  \ln \frac4{5|x|} \,dx \quad &\text{as}\ n=1;\\
    \dint_{|x|<4/5}  |x|^{1- n}\,dx\quad &\text{as}\ n>1;
  \end{cases} \notag\\
  &\quad\ls 1.  \notag
  \end{align}

 If $|y|> 2|x|$, then $|y|/2< |x-y|< 2|y|$.
 Invoking  \eqref{eq-xy-range3}, we then write
 \begin{align}\label{eq2-j3}
   &\iint_{\{(x,y)\in\Omega_3:\ |y|> 2|x|\}}|u_0(x)-u_0(y)|^{p-1} |x-y|^{1-sp-n}\,dy\,dx \\
    &\quad\ls \int_{|x|<4/5} \int_{2|x|<|y|<1} \left(\ln\frac{|y|}{|x|}\right)^{p-1}|y|^{1-2n}\,dy\,dx \notag\\
    &\quad\simeq  \int_0^{4/5}  \left(\int_{2t}^1 \left(\ln\frac{\rho}{t}\right)^{p-1}\rho^{-n}t^{n-1}\,d\rho\right)\,dt \notag\\
    &\quad\simeq  \int_0^{4/5}  \left( \int_{2}^{\frac 1t} \left(\ln\tau\right)^{p-1}\tau^{-n}\,d\tau\right)\,dt \notag\\
    &\quad \ls
  \begin{cases}
      \dint_0^{4/5}  \left(\ln\frac 1t\right)^{p} \,dt \quad &\text{as}\ n=1;\\
     \dint_0^{4/5} \,dt\quad &\text{as}\ n>1;
  \end{cases} \notag\\
  &\quad\ls 1.  \notag
  \end{align}
  Combining \eqref{eq1-j3} and \eqref{eq2-j3} yields that $\cj_3\ls 1$ under the case $sp=n$.
\end{itemize}

\medskip

{\bf  Step 4:\, verification of $\cj_4<\infty$ under $sp<n$.\,}
By  \eqref{eq-u0x+y}, we write
\begin{align*}
  \cj_4 &\le \iint_{\Omega_4} \frac{|x|^{sp-n}}{|x-y|^{n+sp}} \left(|x-{\bf e}_1|^{s-n}+|y-{\bf e}_1|^{s-n}\right)\,dy\,dx\\
  &= \iint_{\Omega_4} \frac{|x|^{sp-n}}{|x-y|^{n+sp}} |x-{\bf e}_1|^{s-n}\,dy\,dx
  +\iint_{\Omega_4} \frac{|x|^{sp-n}}{|x-y|^{n+sp}} |y-{\bf e}_1|^{s-n}\,dy\,dx\\
  &=:\cj_{41}+\cj_{42}.
\end{align*}

\begin{itemize}
  \item[--] {\it Estimation of $\cj_{41}$.} Observe that \eqref{eq-basic2} implies
  $$
  \int_{|x-y|>\max\{|x|/4,\, |x-{\bf e}_1|/9\}} \frac1{|x-y|^{n+sp}}\,dy\ls \left(\max\left\{|x|,\, |x-{\bf e}_1|\right\}\right)^{-sp},
  $$
which further induces
  \begin{align*}
    \cj_{41} &\ls \int_\rn  \left(\max\left\{|x|,\, |x-{\bf e}_1|\right\}\right)^{-sp}  |x|^{sp-n} |x-{\bf e}_1|^{s-n}\,dx\\
    &\ls \int_{|x|<1/2} |x|^{sp-n} \,dx+\int_{1/2\le |x|\le 2} |x-{\bf e}_1|^{s-n}\,dx
    +\int_{|x|\ge 2} |x|^{s-2n}\,dx  \\
    &\ls 1.
  \end{align*}

  \item[--] {\it Estimation of $\cj_{42}$.}
 On the one hand, it is obvious that
 \begin{align}\label{eq1-j42}\iint_{\{(x,y)\in\Omega_4:\ |y-{\bf e}_1|>|x-{\bf e}_1|/2\}} \frac{|x|^{sp-n}}{|x-y|^{n+sp}} |y-{\bf e}_1|^{s-n}\,dy\,dx\le 2^{n-s}\cj_{41}\ls 1.
 \end{align}
On the other hand, for any $(x,y)\in\Omega_4$, we have
$$
|y|\le |y-x|+|x|< 5|x-y|
$$
and
$$
|y-{\bf e}_1|\le |y-x|+|x-{\bf e}_1|< 10|y-x|,
$$
which implies
\begin{align}\label{eq2-j42}
 &\iint_{\{(x,y)\in\Omega_4:\ |y|\le 4|x|\}} \frac{|x|^{sp-n}}{|x-y|^{n+sp}} |y-{\bf e}_1|^{s-n}\,dy\,dx\\
 &\quad\le 4^{n-sp}\int_{y\in\rn}\left(\int_{|x-y|>\max\{|y|/5,\ |y-{\bf e}_1|/10\}} \frac{1}{|x-y|^{n+sp}} \,dx \right)|y|^{sp-n}|y-{\bf e}_1|^{s-n}\,dy\notag\\
 &\quad\ls 1\notag
\end{align}
by terms of a similar estimate as that of $\cj_{41}$.

Under the case   $|y-{\bf e}_1|<|x-{\bf e}_1|/2$ and $|y|>4|x|$, we find that
$$|y|/2<|x-y|<2|y|$$
and
$$
|y|\le |y-{\bf e}_1|+|{\bf e}_1| <\frac{|x-{\bf e}_1|}2+1\le \frac{|x|+1}2+1\le \frac{|y|+1}2+1,
$$
where the latter implies  $|y|<3$ and, hence, $|x|<3/4$. In particular, if $|y|<1/2$, then
$$|y|\ge |{\bf e}_1|-|{\bf e}_1-y|>1-\frac{|x-{\bf e}_1|}2> 1- \frac{|x|+1}2\ge \frac12-\frac{|x|}2>\frac 12-\frac 38=\frac 18.$$
So, if  $|y-{\bf e}_1|<|x-{\bf e}_1|/2$ and $|y|>4|x|$, then we always have $1/8<|y|<3$.
Consequently,
\begin{align}\label{eq3-j42}
 &\iint_{\{(x,y)\in\Omega_4:\ |y-{\bf e}_1|<|x-{\bf e}_1|/2,\ |y|> 4|x|\}} \frac{|x|^{sp-n}}{|x-y|^{n+sp}} |y-{\bf e}_1|^{s-n}\,dy\,dx\\
 &\quad\simeq \iint_{\{(x,y)\in\Omega_4:\ |y-{\bf e}_1|<|x-{\bf e}_1|/2,\ |y|> 4|x|\}} \frac{|x|^{sp-n}}{|y|^{n+sp}} |y-{\bf e}_1|^{s-n}\,dy\,dx\notag\\
 &\quad\ls\int_{1/8<|y|<3} \left(\int_{|x|<|y|/4}\frac{|x|^{sp-n}}{|y|^{n+sp}} |y-{\bf e}_1|^{s-n}\,dx\right)\,dy\notag\\
&\quad\ls\int_{1/8<|y|<3}|y|^{-n}|y-{\bf e}_1|^{s-n}\,dy\notag\\
 &\quad\ls 1.\notag
\end{align}
A combination of \eqref{eq1-j42}-\eqref{eq2-j42}-\eqref{eq3-j42} yields $\cj_{42}\ls 1$.
  \end{itemize}

From the facts $\cj_{41}\ls 1$ and $\cj_{42}\ls 1$, it follows that $\cj_{4}\ls 1$ under the case $sp<n$.

\medskip

{\bf  Step 5:\, verification of $\cj_4<\infty$ under $sp=n$.\,}
In this case, applying   \eqref{eq-u0x+y} gives
\begin{align*}
  \cj_4 &\le \iint_{\Omega_4} \frac{(\ln \frac{|y|}{|x|})^{p-1}}{|x-y|^{n+sp}} \left(|x-{\bf e}_1|^{s-n}+|y-{\bf e}_1|^{s-n}\right)\,dy\,dx.
\end{align*}
If $|y|< 4|x|$, then $0< \ln \frac{|y|}{|x|}<\ln 4=(\ln 4) |x|^{\frac{sp-n}{p-1}}$, so the arguments in {\bf Step 4} also implies
\begin{align*}
\cj_{43}&:=\iint_{\{(x,y)\in\Omega_4:\, |y|< 4|x|\}} \frac{(\ln \frac{|y|}{|x|})^{p-1}}{|x-y|^{n+sp}} \left(|x-{\bf e}_1|^{s-n}+|y-{\bf e}_1|^{s-n}\right)\,dy\,dx\\
&
\le (\ln 4)^{p-1}
\iint_{\Omega_4} \frac{|x|^{sp-n}}{|x-y|^{n+sp}} \left(|x-{\bf e}_1|^{s-n}+|y-{\bf e}_1|^{s-n}\right)\,dy\,dx\\
& \ls 1.
\end{align*}
If $|y|\ge 4|x|$, then $|y|/2\le  |y-x|\le2|y|$. Moreover, from $|y|\ge 4|x|$ and the condition $|x-y|> |x-{\bf e}_1|/9$ that was given in the definition of $\Omega_4$, it follows that
    $$
  1=|{\bf e}_1|\le |{\bf e}_1 -x|+|x|<9|x-y|+|x|<9(|x|+|y|)+|x|<12|y|.
  $$
Thus,
\begin{align*}
\cj_{44}&:=\iint_{\{(x,y)\in\Omega_4:\, |y|\ge 4|x|\}} \frac{(\ln \frac{|y|}{|x|})^{p-1}}{|x-y|^{n+sp}} \left(|x-{\bf e}_1|^{s-n}+|y-{\bf e}_1|^{s-n}\right)\,dy\,dx\\
&\simeq
\iint_{\{(x,y)\in\Omega_4:\, |y|\ge \max\{4|x|, \frac1{12}\}\}} \frac{(\ln \frac{|y|}{|x|})^{p-1}}{|y|^{n+sp}} \left(|x-{\bf e}_1|^{s-n}+|y-{\bf e}_1|^{s-n}\right)\,dy\,dx.
\end{align*}
Next, we will  estimate  $\cj_{44}$  by splitting the integral domain into the following three parts.

\begin{itemize}
  \item[--] {\it Part 1:\, $|x|\le \frac 1{48}$.\,}
If $|x|\le \frac 1{48}$, then $|x-{\bf e}_1|\simeq  1$, which implies
\begin{align}\label{eq1-J44}
&
\iint_{\{(x,y)\in\Omega_4:\, |y|\ge \max\{4|x|, \frac1{12}\},\,|x|\le \frac 1{48}\,\}} \frac{(\ln \frac{|y|}{|x|})^{p-1}}{|y|^{n+sp}} \left(|x-{\bf e}_1|^{s-n}+|y-{\bf e}_1|^{s-n}\right)\,dy\,dx\\
&\quad \ls \int_{|y|\ge  \frac1{12}} \left(\int_{|x|\le \frac 1{48}}\left(\ln (24|y|)+\ln \frac{1}{24|x|}\right)^{p-1}\,dx \right)
|y|^{-(n+sp)} \left(1+|y-{\bf e}_1|^{s-n}\right)\,dy\notag\\
&\quad \ls  \int_{|y|\ge  \frac1{12}} \left[ \ln (24|y|)\right]^{p-1}|y|^{-(n+sp)} \left(1+|y-{\bf e}_1|^{s-n}\right)\,dy\notag\\
&\quad \ls \int_{ \frac1{12}\le |y|<2} \left(1+|y-{\bf e}_1|^{s-n}\right)\,dy
+ \int_{|y|\ge 2} \frac{( \ln |y|)^{p-1} }{
|y|^{n+sp}}   \,dy\notag\\
&\quad \ls 1.\notag
\end{align}

\item[--] {\it Part 2:\, $\frac 1{48}<|x|<4$.\,}  If $(x,y)\in\Omega_4$ such that  $|y|\ge \max\{4|x|, \frac1{12}\}$ and $\frac 1{48}<|x|<4$, then $$\ln \frac{|y|}{|x|}\simeq  \ln(24|y|),$$
so we have by Lemma \ref{lem1} that
\begin{align}\label{eq2-J44}
&
\iint_{\{(x,y)\in\Omega_4:\, |y|\ge \max\{4|x|, \frac1{12}\},\,\frac 1{48}<|x|<4\,\}} \frac{(\ln \frac{|y|}{|x|})^{p-1}}{|y|^{n+sp}} \left(|x-{\bf e}_1|^{s-n}+|y-{\bf e}_1|^{s-n}\right)\,dy\,dx\\
&\quad \ls\int_{|y|\ge   \frac1{12}} \frac{\left[ \ln (24|y|)\right]^{p-1} }{
|y|^{n+sp}} \left(\int_{\frac 1{48}<|x|<4}|x-{\bf e}_1|^{s-n}\,dx+|y-{\bf e}_1|^{s-n}\right)\,dy\notag\\
&\quad \ls  \int_{|y|\ge   \frac1{12}} \frac{\left[ \ln (24|y|)\right]^{p-1} }{
|y|^{n+sp}} \left(1+|y-{\bf e}_1|^{s-n}\right)\,dy\notag\\
&\quad \ls \int_{ \frac1{12}\le |y|<2} \left(1+|y-{\bf e}_1|^{s-n}\right)\,dy
+ \int_{|y|\ge 2} \frac{( \ln |y|)^{p-1} }{
|y|^{n+sp}}   \,dy\notag\\
&\quad \ls 1.\notag
\end{align}

\item[--]  {\it Part 3:\, $|x|\ge 4$.\,}  If $(x,y)\in\Omega_4$ such that  $|y|\ge  \max\{4|x|, \frac1{12}\}$ and $|x|\ge 4$, then $|y|\ge 4|x|\ge 16$ and, hence,
\begin{align*}
\left(\ln \frac{|y|}{|x|}\right)^{p-1} \left(|x-{\bf e}_1|^{s-n}+|y-{\bf e}_1|^{s-n}\right)
&\simeq  \left(\ln \frac{|y|}{|x|}\right)^{p-1} \left(|x|^{s-n}+|y|^{s-n}\right)\\
&\ls \left(\ln \frac{|y|}{|x|}\right)^{p-1} |x|^{s-n},
\end{align*}
which further gives
\begin{align}\label{eq3-J44}
&
\iint_{\{(x,y)\in\Omega_4:\, |y|\ge  \max\{4|x|, \frac1{12}\},\,|x|\ge 4\,\}} \frac{(\ln \frac{|y|}{|x|})^{p-1}}{|y|^{n+sp}} \left(|x-{\bf e}_1|^{s-n}+|y-{\bf e}_1|^{s-n}\right)\,dy\,dx\\
&\quad \ls\int_{|y|\ge  16} \int_{|x|\le |y|/4} \left(\ln \frac{|y|}{|x|}\right)^{p-1} |y|^{-(n+sp)}|x|^{s-n}\,dy\,dx\notag\\
&\quad \simeq
\int_{16}^\infty\left( \int_{0}^{t/4}  \left(\ln \frac{t}{\rho}\right)^{p-1} \rho^{s-1}\,d\rho\right)t^{-sp-1}\,dt
\notag\\
&\quad \simeq
\int_{16}^\infty\left( \int_{0}^{1/4} \left(\ln \frac 1\tau \right)^{p-1} \tau^{s-1}\,dt\right)t^{s(1-p)-1}\,dt
\notag\\
&\quad \ls
\int_{16}^\infty t^{s(1-p)-1}\,dt
\notag\\
&\quad \ls 1.\notag
\end{align}
\end{itemize}
Based on the estimates in \eqref{eq1-J44}-\eqref{eq2-J44}-\eqref{eq3-J44}, we directly obtain $\cj_{44}\ls 1$.
This, combined with  $\cj_{43}\le 1$, further induces that $\cj_4\ls 1$ under the case $sp=n$.
\end{proof}

\subsection{Proof of Theorem \ref{thm-solution}}\label{ss2.3}

\begin{proof}[Proof of Theorem \ref{thm-solution}]
To simplify the notation, we set
$$F(u_0)(x,y):=|u_0(x)-u_0(y)|^{p-2}(u_0(x)-u_0(y)).$$
Recall that for $\az\in(0,n)$ the $\az$-th order  {\it Riesz potential} $\mathcal I_\alpha:=(-\Delta)^{-\alpha/2}$ has the integral expression
\begin{equation*}
\mathcal I_\alpha f(x)=c_{n,\alpha}\int_{\mathbb R^n} \frac{f(z)}{|x-z|^{n-\alpha}}\,dz
\end{equation*}
provided that $f$ has sufficiently nice decay at infinity,
where
$$
c_{n,\alpha}:=\frac{\Gamma(\frac{n-\alpha}{2})}{\pi^{\frac n2}2^\alpha \Gamma(\frac\alpha 2)}
$$
and $\Gamma$ is the Gamma function. In particular, for any $\phi\in C_c^\infty(\rn)$,
it is known that as a consequence of the Fourier transform the identity
 \begin{align}\label{eq-phi}
  \phi(x)={\mathcal I}_s\big((-\Delta)^\frac s2 \phi\big)(x)
\end{align}
holds  for all $x\in\rn$ (see \cite[Lemma~2.3]{LiuXiao2021ACHA}). Thus,  for any $x,y\in\rn$,  it is reasonable to write
  \begin{align*}
    \phi(x)-\phi(y)=c_{n,s}\int_{\rn} \left(|x-z|^{s-n}-|y-z|^{s-n}\right) \big((-\Delta)^\frac s2 \phi\big)(z)\,dz.
  \end{align*}
  Consequently, we have
  \begin{align}\label{eq-Fu0}
    &\int_{\rn}\int_{\rn}\frac{|u_0(x)-u_0(y)|^{p-2}(u_0(x)-u_0(y))(\phi(x)-\phi(y))}{|x-y|^{n+sp}}\,dy\,dx\\
    &\quad=\int_{\rn}\int_{\rn}\frac{F(u_0)(x,y)(\phi(x)-\phi(y))}{|x-y|^{n+sp}}\,dy\,dx\notag\\
    &\quad=c_{n,s}\int_{\rn}\int_{\rn}\int_{\rn}\frac{F(u_0)(x,y)\left(|x-z|^{s-n}-|y-z|^{s-n}\right)}{|x-y|^{n+sp}} \big((-\Delta)^\frac s2 \phi\big)(z)\,dz\,dy\,dx.\notag
  \end{align}
 Next, we claim  that
\begin{align}\label{eq-claim}
\int_{\rn}\int_{\rn}\int_{\rn}\left|\frac{F(u_0)(x,y)\left(|x-z|^{s-n}-|y-z|^{s-n}\right)}{|x-y|^{n+sp}}  \big((-\Delta)^\frac s2 \phi\big)(z)\right|\,dz\,dy\,dx<\infty.
\end{align}
Before going further, we will apply Proposition \ref{prop1} to show \eqref{eq-claim}.

Fix $z\neq 0$ and let $r=|z|$. Let $A$ be a rotation matrix in $\rn$ such that $z= rA{\bf e}_1$.
Since $u_0$ is a radial function and of homogeneity order $\frac{sp-n}{p-1}$, it follows that
$$u_0(rAx)-u_0(rAy)=r^{\frac{sp-n}{p-1}}(u_0(x)-u_0(y))$$
and, hence,
\begin{align*}
 F(u_0)(rAx,rAy)
  &=r^{sp-n} F(u_0)(x,y).
\end{align*}
With these facts in hand, we perform a change of variables by replacing
$x$ and
$y$ with $rA x$ and $rA y$, respectively,  thereby leading to
\begin{align}\label{eq-Fu01}
 & \int_{\rn} \int_{\rn}\frac{|F(u_0)(x,y)|\left||x-z|^{s-n}-|y-z|^{s-n}\right|}{|x-y|^{n+sp}}  \,dy\,dx \\
  &\quad =r^{2n}\int_{\rn} \int_{\rn}\frac{|F(u_0)(rA x,rA y)|\left||rA x-rA{\bf e}_1|^{s-n}-|rA y-rA{\bf e}_1|^{s-n}\right|}{|rA x-rA y|^{n+sp}}  \,dy\,dx\notag\\
  &\quad =r^{s-n} \int_{\rn} \int_{\rn}\frac{|F(u_0)(x,y)|\left||x-{\bf e}_1|^{s-n}-|y-{\bf e}_1|^{s-n}\right|}{|x-y|^{n+sp}}\,dy\,dx\notag\\
  &\quad =C(n,s,p)|z|^{s-n},\notag
\end{align}
where in the last step we used Proposition \ref{prop1} to obtain that
$$
\int_{\rn}\int_{\rn}\frac{|F(u_0)(x,y)|\left||x-{\bf e}_1|^{s-n}-|y-{\bf e}_1|^{s-n}\right|}{|x-y|^{n+sp}}\,dy\,dx=C(n,s,p)<\infty.
$$
 Moreover, we recall that it has been proven in \cite[p.\,73]{Silvestre2007CPAM} that if $\phi\in C_c^\infty(\rn)$ then
$$
|\big((-\Delta)^\frac s2 \phi\big)(z)|\le C(1+|z|)^{-n-s}
$$
holds for all $z\in\rn$ and for some constant $C\in(0,\infty)$. From this last fact, it follows that
\begin{align}\label{eq-Fu02}
  \int_\rn |z|^{s-n} |\big((-\Delta)^\frac s2 \phi\big)(z)|\,dz
  &\ls \int_\rn |z|^{s-n}(1+|z|)^{-n-s}\,dz\\
  &\ls \int_{|z|<1} |z|^{s-n}\,dz+\int_{|z|\ge 1} |z|^{-2n}\,dz\notag\\
  &\ls 1.\notag
\end{align}
Combining \eqref{eq-Fu01} and \eqref{eq-Fu02} shows that \eqref{eq-claim} is true.

Due to \eqref{eq-claim}, we can apply the Fubini theorem  to exchange the order of integrals in \eqref{eq-Fu0}, which gives
 \begin{align}\label{eq-Fu00}
    &\int_{\rn}\int_{\rn}\frac{|u_0(x)-u_0(y)|^{p-2}(u_0(x)-u_0(y))(\phi(x)-\phi(y))}{|x-y|^{n+sp}}\,dy\,dx\\
     &\quad=c_{n,s}\int_{\rn}\left(\int_{\rn}\int_{\rn}\frac{F(u_0)(x,y)\left(|x-z|^{s-n}-|y-z|^{s-n}\right)}{|x-y|^{n+sp}} \,dy\,dx\right)\big((-\Delta)^\frac s2 \phi\big)(z)\,dz. \notag
  \end{align}
For any $z\neq 0$, we proceed the same argument as in \eqref{eq-Fu01} and obtain
\begin{align*}
 & \int_{\rn}\int_{\rn}\frac{F(u_0)(x,y)\left(|x-z|^{s-n}-|y-z|^{s-n}\right)}{|x-y|^{n+sp}} \,dy\,dx\\
  &\quad =|z|^{s-n} \int_{\rn}\int_{\rn}\frac{F(u_0)(x,y)\left(|x-{\bf e}_1|^{s-n}-|y-{\bf e}_1|^{s-n}\right)}{|x-y|^{n+sp}} \,dy\,dx. \notag
\end{align*}
Upon recalling that
$$c_{n,s,p}^\ast= \frac 12\int_{\rn}\int_{\rn}\frac{F(u_0)(x,y)\left(|x-{\bf e}_1|^{s-n}-|y-{\bf e}_1|^{s-n}\right)}{|x-y|^{n+sp}} \,dy\,dx$$
and applying \eqref{eq-phi},
we now continue \eqref{eq-Fu00} with
\begin{align*}
     &\frac 12\int_{\rn}\int_{\rn}\frac{|u_0(x)-u_0(y)|^{p-2}(u_0(x)-u_0(y))(\phi(x)-\phi(y))}{|x-y|^{n+sp}}\,dy\,dx\\
    &\quad=c_{n,s,p}^\ast c_{n,s}\int_{\rn}|z|^{s-n}\big((-\Delta)^\frac s2 \phi\big)(z)\,dz\\
    &\quad = c_{n,s,p}^\ast {\mathcal I}_s\big((-\Delta)^\frac s2 \phi\big)(0)\\
    &\quad =c_{n,s,p}^\ast\phi(0).
  \end{align*}
  This ends the  proof of Theorem \ref{thm-solution}.
\end{proof}

\section{Nonexistence of nontrivial nonnegative solutions to the fractional $p$-Laplace inequality}\label{sec3}

The main goal of this section is to prove Theorem \ref{thm-Liouville}(i)-(ii).
Let us be more precise. In Sections \ref{ss3.1} and  \ref{ss3.2} below, we respectively recall
the weak Harnack inequality for  weak supersolutions, and also comparison theorems for both weak supersolutions and for super/sub-harmonic functions that are from
\cite{KorvenpaaKuusiPalatucci2017MathAnn, KuusiMingioneSire2015CMP}. Additionally, we show that the fundamental solution in Theorem \ref{thm-solution} is a harmonic function (see Proposition \ref{prop-harmonic} below). Consequently,
in  Section \ref{ss3.3} below, we not only establish a basic integral estimate involving  the weak supersolutions (see Theorem \ref{thm-harnack} below),
but also compare the weak supersolution with the fundamental solution via the comparison theorems, demonstrating that any weak supersolution is bounded from below by the fundamental solution at infinity (see Theorem \ref{thm-mr} below).
Using these two technical results (Theorems \ref{thm-harnack} and \ref{thm-mr}), we prove parts (i) and (ii) of Theorem \ref{thm-Liouville} in Section \ref{ss3.4} below.


\subsection{Weak supersolutions}\label{ss3.1}

\begin{definition}\label{def-weaksolu}
  Let $s\in (0,1)$, $p\in(1,\infty)$ and $\Omega$ be an open set in $\rn$.
  \begin{enumerate}
    \item[\rm (i)] A function $u\in W_\loc^{s,p}(\Omega)$ such that $u_-\in L_{sp}^{p-1}(\rn)$
    is called a
  {\it weak $(s,p)$-supersolution} in $\Omega$ if
    \begin{align*}
  (-\Delta)^s_pu\ge 0 \quad\text{weakly in}\ \ \Omega
  \end{align*}
that is, for all nonnegative functions $\phi\in C_c^\infty(\Omega)$,
\begin{align}\label{eq-spuphi}
  \langle (-\Delta)^s_pu,\ \phi\rangle :=  \frac 12 \int_{\rn}\int_{\rn}\frac{|u(x)-u(y)|^{p-2}(u(x)-u(y))(\phi(x)-\phi(y))}{|x-y|^{n+sp}}\,dy\,dx\ge 0.
\end{align}

  \item[\rm (ii)] Similarly,
   $u\in W_\loc^{s,p}(\Omega)$ such that $u_+\in L_{sp}^{p-1}(\rn)$
    is called a
  {\it weak $(s,p)$-subsolution}  in $\Omega$
   if
   \begin{align*}
  (-\Delta)^s_pu\le 0 \quad\text{weakly in}\ \ \Omega,
  \end{align*}
that is, for all $0\le\phi\in C_c^\infty(\Omega)$,
   $$\langle (-\Delta)^s_pu,\ \phi\rangle\le 0.$$

   \item[\rm (iii)]  If $u\in W_\loc^{s,p}(\Omega)\cap L_{sp}^{p-1}(\rn)$ is said to be a {\it weak $(s,p)$-solution} in $\Omega$
       if it is both a weak $(s,p)$-supersolution and a weak $(s,p)$-subsolution in $\Omega$. In this case,  $u$ satisfies
  \begin{align*}
  (-\Delta)^s_pu= 0 \quad\text{weakly in}\ \ \Omega,
  \end{align*}
  that is, for all $0\le\phi\in C_c^\infty(\Omega)$,
   $$\langle (-\Delta)^s_pu,\ \phi\rangle= 0.$$
  \end{enumerate}
\end{definition}

\begin{remark}\label{rem-Deltaspu}
For a sufficiently nice function $u$ (for instance, $u\in C_c^\infty(\rn)$),  it follows from Proposition \ref{prop-ChenLi} that $(-\Delta)^s_pu$ is pointwisely well-defined and, for any $0\le\phi\in C_c^\infty(\rn)$,
\begin{align*}
\int_\rn (-\Delta)^s_pu(x)\phi(x)\,dx
&= \mathrm{p.\,v.}\int_{\rn}\int_{\rn}\frac{|u(x)-u(y)|^{p-2}(u(x)-u(y))\phi(x)}{|x-y|^{n+sp}}\,dy\,dx.
\end{align*}
Interchanging the roles of $x$ and $y$, we  also have
\begin{align*}
\int_\rn (-\Delta)^s_pu(x)\phi(x)\,dx
&= \mathrm{p.\,v.}\int_{\rn}\int_{\rn}\frac{|u(y)-u(x)|^{p-2}(u(y)-u(x))\phi(y)}{|y-x|^{n+sp}}\,dx\,dy.
\end{align*}
Adding these two expressions and dividing both sides by $2$ yields the symmetric weak formulation
\begin{align*}
\int_\rn (-\Delta)^s_pu(x)\phi(x)\,dx
=\frac 12\int_{\rn}\int_{\rn}\frac{|u(x)-u(y)|^{p-2}(u(x)-u(y))(\phi(x)-\phi(y))}{|x-y|^{n+sp}}\,dy\,dx.
\end{align*}
This justifies the definition of the bracket in \eqref{eq-spuphi}.
\end{remark}

We will need the following version of nonlocal weak Harnack inequality from
 \cite[Theorem~6]{KorvenpaaKuusiPalatucci2017MathAnn}
 (see also \cite[Theorem~1.2]{DiCastroKuusiPalatucci2014JFA}).

\begin{lemma}\label{lem-Harnack}
Let $s\in (0,1)$, $p\in (1,\infty)$  and $\Omega$ be an open set in $\rn$. Assume that
 $0\le u\in W_\loc^{s,p}(\Omega)$  such that
		\begin{equation*}
		(-\Delta)^s_pu\ge 0\ \ \text{weakly in}\ \ \Omega.
		\end{equation*}
Then,  for any ball $B$ satisfying $2B\Subset \Omega$ and for any $t\in (0, \infty)$ satisfying
\begin{align}\label{eq-bar-t}
t<\bar t:= \begin{cases}
             \frac{n(p-1)}{n-sp} & \text{as}\ sp<n;\\
             +\infty  & \text{as}\ sp\ge n,
           \end{cases}
\end{align}
it holds that
\begin{equation}\label{eq-WH}
\left(\fint_{B} u(x)^t\, dx\right)^\frac 1t
\le C \mathop{\mathrm{essinf}}_{2B}u,
\end{equation}
where $C$ is a positive constant  depending only on $s,p$ and $n$.
\end{lemma}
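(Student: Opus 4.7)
The plan is to follow the nonlocal Moser iteration strategy of Di Castro--Kuusi--Palatucci adapted to fractional $p$-Laplace supersolutions, combining a logarithmic estimate with an iteration on negative powers. Without loss of generality, by replacing $u$ with $u+d$ for $d>0$ and sending $d\to 0^+$ at the end, I may assume $u$ is strictly positive and bounded below on the relevant balls.

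First, I would derive a Caccioppoli-type estimate by testing the weak inequality \eqref{eq-spuphi} with
\[
\phi(x)=\eta(x)^{p}\, u(x)^{1-p},
\]
where $\eta\in C_c^\infty(B)$ is a standard cutoff. Using the elementary algebraic inequalities for $|a-b|^{p-2}(a-b)$ (the same types that produce \eqref{eq-ChenLi57}), I can split the resulting double integral into a ``good'' term controlling
\[
\int_B\!\int_B \frac{\eta(x)^{p}\eta(y)^{p}\,|\log u(x)-\log u(y)|^{p}}{|x-y|^{n+sp}}\,dx\,dy
\]
and ``error'' terms that are absorbed by the smoothness of $\eta$ plus a nonlocal tail contribution of the form $\||u_-\||_{L^{p-1}_{sp}(\rn)}^{p-1}$, which is finite by hypothesis. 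This is the fractional logarithmic estimate, and it implies that $\log u$ lies in a fractional BMO-type class on $B$. Applying a John--Nirenberg-style inequality (in the form used by Di Castro--Kuusi--Palatucci) then gives the self-improvement
\[
\left(\fint_{B}u^{t_0}\,dx\right)^{1/t_0}\left(\fint_{B}u^{-t_0}\,dx\right)^{1/t_0}\le C
\]
for some small $t_0>0$.

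Second, I would run a Moser iteration on \emph{negative} powers. Testing with $\phi=\eta^p u^{-\alpha}$ for $\alpha>p-1$, the Caccioppoli inequality combined with the fractional Sobolev embedding $W^{s,p}\hookrightarrow L^{np/(n-sp)}$ (when $sp<n$; otherwise any exponent works) yields
\[
\left(\fint_{\sigma'B}u^{-\kappa\alpha}\right)^{\frac{1}{\kappa\alpha}}\le \frac{C}{(\sigma-\sigma')^\theta}\left(\fint_{\sigma B}u^{-\alpha}\right)^{\frac{1}{\alpha}},\qquad \kappa=\frac{n}{n-sp},
\]
plus a controlled tail. Iterating this on a sequence of shrinking concentric balls between $2B$ and $B$ produces
\[
\einf_{2B}u\ge C\left(\fint_{B}u^{-t_0}\right)^{-1/t_0}.
\]
Chaining the last two displays yields \eqref{eq-WH} for $t=t_0$, and a further interpolation/reverse-H\"older argument lifts $t_0$ up to any exponent strictly below the critical threshold $\bar t$ in \eqref{eq-bar-t}; the value $\bar t=n(p-1)/(n-sp)$ appears precisely because it is the largest exponent for which the Sobolev iteration on $u^{-(p-1)/\kappa^j}$ can be started.

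The main technical obstacle is the Caccioppoli step: the nonlocal pairing $|u(x)-u(y)|^{p-2}(u(x)-u(y))(u(x)^{1-p}-u(y)^{1-p})$ must be shown to be pointwise bounded below by $c|\log u(x)-\log u(y)|^p$ modulo a remainder absorbable by $\eta$, and the tail term generated by the long-range interaction with $u_-$ needs a careful splitting between the near field (handled by the cutoff) and the far field (handled by $\||u_-\||_{L^{p-1}_{sp}}$). Once these two nonlocal nuisances are tamed, the rest is essentially Moser's scheme applied twice: once for positive and once for negative powers of $u$.
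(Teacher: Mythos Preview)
The paper does not prove this lemma at all: it is simply quoted from the literature, with the one-line attribution ``the following version of nonlocal weak Harnack inequality from \cite[Theorem~6]{KorvenpaaKuusiPalatucci2017MathAnn} (see also \cite[Theorem~1.2]{DiCastroKuusiPalatucci2014JFA}).'' So there is nothing in the paper to compare your argument against.

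That said, your outline is exactly the Di Castro--Kuusi--Palatucci scheme underlying the cited result: logarithmic Caccioppoli estimate from the test function $\eta^p u^{1-p}$, a John--Nirenberg step to obtain the crossover bound $\bigl(\fint u^{t_0}\bigr)^{1/t_0}\bigl(\fint u^{-t_0}\bigr)^{1/t_0}\le C$, and Moser iteration on negative powers to control the essential infimum. Two small remarks. First, since the hypothesis is $u\ge 0$, the tail of $u_-$ is identically zero, not merely finite; the nonlocal error terms that actually survive involve the positive part of $u$ outside the ball, and these are handled by enlarging the ball within $\Omega$ rather than by any global tail-space assumption. Second, the passage from the small exponent $t_0$ to an arbitrary $t<\bar t$ is not a reverse-H\"older or interpolation argument but a further Moser iteration on small \emph{positive} powers (testing with $\eta^p u^{1-\beta}$ for $0<\beta<p-1$); the threshold $\bar t$ arises because this forward iteration jams precisely at $\beta=p-1$. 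Neither point is a gap in your strategy, just a sharpening of the description.
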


Next, we recall the following lemma from \cite[Lemma~7.3]{KuusiMingioneSire2015CMP} (see also
\cite[Lemma~11]{KorvenpaaKuusiPalatucci2017MathAnn}).

\begin{lemma}\label{lem-hanack2}
Let $s\in (0,1)$, $p\in (1,\infty)$  and $\Omega$ be an open set in $\rn$. Assume that
 $0\le u\in W_\loc^{s,p}(\Omega)$  such that
\begin{equation*}
		(-\Delta)^s_pu\ge 0\ \ \text{weakly in}\ \ \Omega.
		\end{equation*}
Suppose that $h\in (0, s)$ and $\nu\in (0, \bar \nu)$, where
\begin{align}\label{eq-bar-nu}
\bar \nu =\min\left\{\frac{n(p-1)}{n-s},\ p\right\}= \begin{cases}
             \frac{n(p-1)}{n-s} & \text{as}\ sp<n;\\
             p  & \text{as}\ sp\ge n.
           \end{cases}
\end{align}
Then, there exists a positive constant $C$, depending on $n,s,p, s-h$ and $\bar\nu-\nu$, such that for any ball $B$ of radius $r\in(0,\infty)$ and $4B\Subset\Omega$,
\begin{align}\label{eq-Holder}
\left(\fint_{2B}\int_{2B}\frac{|u(x)-u(y)|^\nu}{|x-y|^{n+h\nu}}\,dy\,dx\right)^{1/\nu}\le C r^{-h}\einf_{B} u.
\end{align}
Let us emphasize that the constant $C$ in \eqref{eq-Holder} is independent of $u$, $B$ and $\Omega$.
\end{lemma}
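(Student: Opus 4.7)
Since the statement is explicitly quoted from \cite{KuusiMingioneSire2015CMP} and \cite{KorvenpaaKuusiPalatucci2017MathAnn}, my plan would be simply to cite those references for the proof. Nevertheless, the standard route for deriving such a nonlocal Morrey-type seminorm bound from scratch is a three-step Caccioppoli-plus-Harnack argument, which I would reproduce as follows.

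\textbf{Step 1 (logarithmic Caccioppoli).} For the nonnegative weak $(s,p)$-supersolution $u$ on $\Omega \supset 4B$, I would test the weak formulation \eqref{eq-spuphi} against $\phi = \eta^p (u+\varepsilon)^{1-p}$, where $\varepsilon>0$ and $\eta \in C_c^\infty(3B)$ is a cutoff satisfying $\eta \equiv 1$ on $2B$ and $|\nabla \eta| \lesssim 1/r$. Using the standard pointwise algebraic inequality for $|a-b|^{p-2}(a-b)$ combined with $\varepsilon$-Young, and splitting the outer integral into $3B \times 3B$ and the complementary tail piece, one obtains after $\varepsilon \downarrow 0$ a bound of the form
\[
\iint_{2B \times 2B} \frac{|\log u(x) - \log u(y)|^p}{|x-y|^{n+sp}}\,dy\,dx \;\le\; C\, r^{n-sp}\Bigl(1 + \||u_-\||_{L_{sp}^{p-1}(\rn)}^{p-1}\Bigr),
\]
which is the $p$-fractional analog of the John--Nirenberg-type estimate.

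\textbf{Step 2 (passage from $\log u$ to $u$).} Next I would convert the logarithmic seminorm bound into the $W^{h,\nu}$ bound on $u$ itself. The key pointwise device is
\[
|u(x)-u(y)|^\nu \;\le\; \bigl(\max\{u(x),u(y)\}\bigr)^\nu \bigl|\log u(x) - \log u(y)\bigr|^{\nu \wedge p},
\]
followed by Hölder with exponents chosen so the $\log$-factor integrates against $|x-y|^{-n-h\nu}$ via the bound from Step~1, while the $u^\nu$-factor integrates to an $L^t$-average of $u$ on $2B$ with $t < \bar t$. This interpolation works precisely when $\nu < \bar\nu = \min\{n(p-1)/(n-s),\, p\}$ (so that Sobolev embedding on the $\log$-side and the admissible Harnack exponent on the $u$-side match up) and when $h < s$ (so that $h$ many derivatives can be traded against $s-h$ in Hölder).

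\textbf{Step 3 (weak Harnack).} Finally I would apply Lemma~\ref{lem-Harnack} to the resulting $L^t$-average $(\fint_{2B} u^t)^{1/t}$, replacing it by $C \einf_{B} u$, which absorbs into the claimed bound after multiplying by $r^{-h}$. I expect the main obstacle to be Step~2: namely tracking the exact quantitative interplay between the two exponent ranges $\nu < \bar\nu$ and $h < s$, since the $\log$-to-$u$ comparison, the fractional Hölder inequality, and the admissibility window for weak Harnack must all be balanced so that the constant $C$ depends only on $n,s,p,s-h,\bar\nu - \nu$. The tail term arising in Step~1 is genuinely nonlocal but, under nonnegativity of $u$, it is controlled by the $L_{sp}^{p-1}$-quasinorm of $u_-$ which vanishes, leaving only the local contribution that yields the stated right-hand side.
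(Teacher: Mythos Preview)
Your proposal is correct and matches the paper's treatment: the paper does not prove this lemma at all but simply quotes it from \cite[Lemma~7.3]{KuusiMingioneSire2015CMP} and \cite[Lemma~11]{KorvenpaaKuusiPalatucci2017MathAnn}, exactly as you state in your first sentence. Your three-step sketch (logarithmic Caccioppoli, $\log u$-to-$u$ conversion via H\"older, then weak Harnack) goes beyond what the paper does and is a reasonable outline of the argument in those references, but for the purposes of this paper the bare citation suffices.
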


\subsection{Comparison principles}\label{ss3.2}

Let us begin with the following comparison principle  for weak solutions, which is  proved
in
\cite[Lemma~6]{KorvenpaaKuusiPalatucci2017MathAnn} (see also \cite[Lemma~9]{LindgrenLindqvist2014CVPDE}).

\begin{lemma}\label{lem-KKP-CP2}
 Let $s\in(0,1)$ and $p\in(1,\infty)$. Suppose that $D$  and $\Omega$ are open subsets of $\rn$ such that $D\Subset \Omega$.
For any $u, v\in W_{\loc}^{s,p}(\Omega)$ such that $u_-, v_-\in L_{sp}^{p-1}(\rn)$, if
 $u\ge v$ almost everywhere on $\rn\setminus D$
  and
   $$\langle (-\Delta)^s_pu,\ \phi\rangle\ge \langle (-\Delta)^s_pv,\ \phi\rangle\quad\text{for all}\ \ 0\le\phi\in C_c^\infty(\Omega),$$
 then $u\ge v$ almost everywhere on $D$ (and, hence, on $\rn$).
\end{lemma}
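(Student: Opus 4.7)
My plan is to test the hypothesized inequality with the nonnegative function $\phi:=(v-u)_+$ and then exploit the strict monotonicity of $h(t):=|t|^{p-2}t$ on $\rr$. Since $u\ge v$ almost everywhere on $\rn\setminus D$, the function $\phi$ vanishes off $\overline D$ and therefore has compact support in $\Omega$; because the positive-part operator preserves $W^{s,p}$-regularity and $v-u\in W^{s,p}_\loc(\Omega)$, we have $\phi\in W^{s,p}(\rn)$ with $\supp\phi\Subset\Omega$. The tail hypotheses $u_-,v_-\in L^{p-1}_{sp}(\rn)$ ensure that the nonlocal energy pairing
\begin{equation*}
\mathcal{E}(w,\phi):=\frac12\iint_{\rn\times\rn}\frac{|w(x)-w(y)|^{p-2}(w(x)-w(y))(\phi(x)-\phi(y))}{|x-y|^{n+sp}}\,dy\,dx
\end{equation*}
is absolutely convergent for $w\in\{u,v\}$, and a standard mollification/truncation argument produces nonnegative $\phi_k\in C_c^\infty(\Omega)$ with $\phi_k\to\phi$ in the relevant seminorm, so that the hypothesized inequality $\mathcal{E}(u,\phi_k)\ge\mathcal{E}(v,\phi_k)$ passes to the limit and yields $\mathcal{E}(u,\phi)-\mathcal{E}(v,\phi)\ge 0$.

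The core step is to show that the integrand of $\mathcal{E}(u,\phi)-\mathcal{E}(v,\phi)$ is pointwise nonpositive almost everywhere on $\rn\times\rn$. Setting $a=u(x)$, $b=u(y)$, $c=v(x)$, $d=v(y)$, I would partition $\rn\times\rn$ according to the signs of $c-a$ and $d-b$. On $\{c>a,\,d>b\}$ we have $\phi(x)-\phi(y)=(c-d)-(a-b)$, so the integrand is $-[h(a-b)-h(c-d)][(a-b)-(c-d)]\le 0$ by monotonicity of $h$. On the mixed set $\{c>a,\,d\le b\}$ we have $\phi(x)-\phi(y)=c-a>0$, while $(c-d)-(a-b)=(c-a)+(b-d)>0$ forces $h(a-b)<h(c-d)$, so the integrand is strictly negative there; the set $\{c\le a,\,d>b\}$ is handled symmetrically. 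On $\{c\le a,\,d\le b\}$ both values of $\phi$ vanish, contributing zero.

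Combining this pointwise bound with the integral inequality $\mathcal{E}(u,\phi)-\mathcal{E}(v,\phi)\ge 0$ forces the integrand to vanish almost everywhere. In particular the two mixed sets, which are the products $\{v>u\}\times\{v\le u\}$ and $\{v\le u\}\times\{v>u\}$, must have zero measure in $\rn\times\rn$. Since $\{y\in\rn:v(y)\le u(y)\}$ contains $\rn\setminus D$ and hence has positive measure, the Fubini theorem forces $|\{x\in\rn:v(x)>u(x)\}|=0$, yielding $u\ge v$ almost everywhere on $\rn$ as required.

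The main technical obstacle I anticipate is the admissibility of the non-smooth $\phi=(v-u)_+$ as a test function: one must extend the bilinear form $(w,\phi)\mapsto\mathcal{E}(w,\phi)$ continuously from pairs with $\phi\in C_c^\infty(\Omega)$ to pairs with $\phi\in W^{s,p}(\rn)$ and $\supp\phi\Subset\Omega$, and verify that the hypothesized inequality survives the limit. This requires combining the global tail integrability of $u_-,v_-$ with the local Sobolev regularity of $u,v$ via careful cutoff and mollification estimates, since the kernel $|x-y|^{-n-sp}$ is singular on the diagonal and the pairing involves values of $u,v$ over the whole of $\rn$. Once this admissibility is secured, the remainder reduces to the elementary four-case monotonicity analysis sketched above.
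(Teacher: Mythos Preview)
The paper does not give its own proof of this lemma; it quotes the result from \cite[Lemma~6]{KorvenpaaKuusiPalatucci2017MathAnn} (and \cite[Lemma~9]{LindgrenLindqvist2014CVPDE}). Your sketch is precisely the argument used in those references: test with $\phi=(v-u)_+$, use the strict monotonicity of $t\mapsto|t|^{p-2}t$ via the four-case split you describe, and deduce $|\{v>u\}|=0$ from the vanishing of the integrand on the mixed product sets $\{v>u\}\times\{v\le u\}$ together with Fubini's theorem. The admissibility issue you flag---extending the class of test functions from $C_c^\infty(\Omega)$ to compactly supported $W^{s,p}$ functions so that $\phi=(v-u)_+$ may be inserted---is indeed the only real work, and it is handled in the cited references by exactly the cutoff-and-mollify route you outline.
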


The problem is that the fundamental solution  in \eqref{eq-U}, which behaves like $|x|^{\frac{sp-n}{p-1}}$,  may fail to belong to $L^p(\Omega)$ and $W^{s,p}(\Omega)$. In order to  have  a comparison theorem applicable to the fundamental solution, we recall the notion of
$(s, p)$-harmonic functions (see \cite{KorvenpaaKuusiPalatucci2017MathAnn, KorvenpaaKuusiLindgren2019JMPA}).

\begin{definition}\label{def-harmonic}
 A function $u :\ \rn\to [-\infty,+\infty]$ is called an {\it $(s, p)$-superharmonic
function} in an open set $\Omega\subseteq\rn$ if it satisfies the following conditions:
\begin{enumerate}
\item[\rm (i)] $u < +\infty$ a.e. on $\rn$ and $u > -\infty$ everywhere in $\Omega$;
\item[\rm (ii)] $u$ is lower semicontinuous in $\Omega$;
\item[\rm (iii)] $u$ satisfies the comparison principle in $\Omega$ against solutions, that is,
if $D$ is an open set of $\rn$, $D\Subset\Omega$ and $v \in C(\overline D)$ is a weak $(s,p)$-solution in $D$  and $u\ge  v$ on $\partial D$ and almost everywhere on $\rn\setminus D$, then
$u\ge  v$ in $D$;
\item[\rm (iv)]  $u_-=-\min\{u,0\}$ belongs to $L^{p-1}_{sp}(\rn)$.
\end{enumerate}
A function $u$ is said to be {\it $(s, p)$-subharmonic} in $\Omega$ if $-u$ is $(s, p)$-superharmonic in $\Omega$. If both $u$ and $-u$ are $(s, p)$-superharmonic  in $\Omega$, then $u$ is said to be {\it $(s, p)$-harmonic}  in $\Omega$.
\end{definition}

Note that constant functions are always $(s,p)$-harmonic  in $\rn$.
As a consequence of Lemma \ref{lem-KKP-CP2}, we can show that fundamental solutions given  in \eqref{eq-U}
are
$(s,p)$-harmonic  in $\rn\setminus\{0\}$.

\begin{proposition}\label{prop-harmonic}
 Let $s\in (0,1)$ and $p\in (1,\infty)$ such that $sp\le n$. Then the function $u_0$ in \eqref{eq-u0} (and, hence, the fundamental solution $u$  in \eqref{eq-U})
 is $(s,p)$-harmonic  in $\rn\setminus\{0\}$.
\end{proposition}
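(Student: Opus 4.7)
The strategy is to verify directly the four defining conditions of Definition \ref{def-harmonic} for both $u_0$ and $-u_0$ on the open set $\Omega := \rn \setminus \{0\}$; recall that $u_0$ is $(s,p)$-harmonic on $\Omega$ precisely when both $u_0$ and $-u_0$ are $(s,p)$-superharmonic there. Conditions (i) (finiteness) and (ii) (lower semicontinuity) are immediate since $u_0 \in C^\infty(\Omega)$ is smooth and real-valued on $\Omega$. Condition (iv), the tail-space inclusion $(u_0)_\pm \in L^{p-1}_{sp}(\rn)$, is an elementary estimate from \eqref{eq-u0}: in the subcritical case $sp<n$ one has $u_0 \ge 0$ and hence $(u_0)_- \equiv 0$, while the integral $\int_\rn |x|^{sp-n}(1+|x|)^{-n-sp}\,dx$ converges because $sp-n>-n$ controls the singularity at the origin and the integrand decays like $|x|^{-2n}$ at infinity; the critical case $sp=n$ is analogous with logarithmic corrections.

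The substantive step is the comparison property (iii). The first observation is that $u_0$ itself is a weak $(s,p)$-solution on $\Omega$ in the sense of Definition \ref{def-weaksolu}(iii): the identity
\begin{equation*}
\frac{1}{2}\int_\rn\int_\rn \frac{|u_0(x)-u_0(y)|^{p-2}(u_0(x)-u_0(y))(\phi(x)-\phi(y))}{|x-y|^{n+sp}}\,dy\,dx = c^\ast_{n,s,p}\phi(0)
\end{equation*}
furnished by Theorem \ref{thm-solution} annihilates the right-hand side whenever $\phi \in C_c^\infty(\Omega)$, since then $\phi(0)=0$. The $(p-1)$-homogeneity of $(-\Delta)^s_p$ gives $(-\Delta)^s_p(-u_0) = -(-\Delta)^s_p u_0$, so $-u_0$ is likewise a weak $(s,p)$-solution on $\Omega$.

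To deduce (iii) for $u_0$, fix any open set $D\Subset\Omega$ and $v\in C(\overline D)$ a weak $(s,p)$-solution in $D$ satisfying $u_0 \ge v$ on $\partial D$ and almost everywhere on $\rn\setminus D$. The nonnegative function $\phi := (v-u_0)_+$ is continuous on $\overline D$, vanishes on $\partial D$ (by the boundary hypothesis together with continuity) and almost everywhere on $\rn\setminus D$, and so has essential support contained in a compact subset of $D$. Testing the weak equations for $u_0$ (valid on $\Omega\supset D$) and for $v$ (valid on $D$) both against $\phi$ yields
\begin{equation*}
\langle (-\Delta)^s_p v - (-\Delta)^s_p u_0,\, (v-u_0)_+ \rangle = 0,
\end{equation*}
and the strict monotonicity of the fractional $p$-Laplacian (the same ellipticity inequality of the form $(|a|^{p-2}a-|b|^{p-2}b)(a-b) \gtrsim (|a|+|b|)^{p-2}|a-b|^2$ for $p\ge 2$, and its analogue for $1<p<2$, that drives the proof of Lemma \ref{lem-KKP-CP2}) forces $(v-u_0)_+ \equiv 0$ a.e. on $\rn$, hence $u_0 \ge v$ a.e. on $D$ and, by continuity, pointwise on $D$. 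The parallel argument with $(u_0,v)$ replaced by $(-u_0,-v)$ establishes (iii) for $-u_0$.

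The anticipated main obstacle is the rigorous handling of the domain mismatch, namely that $v$ is a weak solution only on $D$ rather than on all of $\Omega$. This turns out to be harmless because the test function $(v-u_0)_+$ has compact support inside $D$, so the bracket $\langle (-\Delta)^s_p v, (v-u_0)_+\rangle$ only invokes the weak equation for $v$ on $D$, and the standard monotonicity calculation of Lemma \ref{lem-KKP-CP2} then applies verbatim. Once conditions (i)--(iv) are verified for both $u_0$ and $-u_0$, we conclude that $u_0$ (and hence the fundamental solution $u$ in \eqref{eq-U}) is $(s,p)$-harmonic in $\Omega = \rn\setminus\{0\}$.
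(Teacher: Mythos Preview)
Your overall strategy matches the paper's: verify (i), (ii), (iv) of Definition~\ref{def-harmonic} directly, use Theorem~\ref{thm-solution} to see that $u_0$ is a weak $(s,p)$-solution on $\rn\setminus\{0\}$, and then deduce the comparison property (iii) from the monotonicity underlying Lemma~\ref{lem-KKP-CP2}. The gap is in your handling of (iii). You assert that $\phi=(v-u_0)_+$ ``has essential support contained in a compact subset of $D$'' because it is continuous on $\overline D$ and vanishes on $\partial D$. This implication is false: a continuous function on $\overline D$ vanishing on $\partial D$ can be strictly positive on all of $D$ (take $D=(0,1)$ and $x\mapsto x(1-x)$), so its support is $\overline D$, which is compact but not a subset of $D$. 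Consequently $(v-u_0)_+$ need not be an admissible test function for the weak equation for $v$, which is only assumed to hold against $C_c^\infty(D)$ (equivalently, against $W^{s,p}$ functions compactly supported in $D$). Your ``anticipated main obstacle'' paragraph dismisses exactly this point on the basis of the false compact-support claim.

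The paper closes this gap with an $\epsilon$-trick: since $u_0$ and $v$ are both continuous on $\overline D$ and $u_0\ge v$ on $\partial D$, for each $\epsilon>0$ there is an open $D_\epsilon\Subset D$ with $u_0\ge v-\epsilon$ on $D\setminus D_\epsilon$ (and hence a.e.\ on $\rn\setminus D_\epsilon$). Now $D_\epsilon\Subset D$ gives the compact containment needed to invoke Lemma~\ref{lem-KKP-CP2} with $\Omega=D$ and the lemma's inner set equal to $D_\epsilon$, yielding $u_0\ge v-\epsilon$ a.e.\ on $\rn$; letting $\epsilon\to 0$ finishes. Your argument is repaired by inserting exactly this perturbation before testing, so that $(v-\epsilon-u_0)_+$ genuinely has support in $\overline{D_\epsilon}\subset D$. (You should also record, as the paper does, that $u_0\in W^{s,p}(D)$ for $D\Subset\rn\setminus\{0\}$, so that the difference $v-u_0$ lies in $W^{s,p}_\loc(D)$ and the bracket makes sense.)
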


\begin{proof}
Let $u_0$ be as in \eqref{eq-u0}.
It suffices to validate that  $u_0$ satisfies Definition \ref{def-harmonic}(iii); the other items of Definition \ref{def-harmonic} are obvious for $u_0$.

Let $D$ be an open set of $\rn$ such that $D\Subset \rn\setminus\{0\}$, which implies that $\overline D$ is compact and $0\notin \overline D$.
From this and the continuity of $u_0$  on $\rn\setminus\{0\}$,
it follows that
$u_0\in L^{p}(D)$.  Further, applying \eqref{eq-u0xy} and Lemma \ref{lem1} gives
\begin{align*}
[u_0]_{W^{s,p}(D)}^p
&=\int_D\int_D \frac{|u_0(x)-u_0(y)|^p}{|x-y|^{n+sp}}\,dy\,dx\\
&\ls \int_D \int_{|y-x|<1} \frac{|x|^{(\frac{sp-n}{p-1}-1)p} |x-y|^p}{|x-y|^{n+sp}}\,dy\,dx\\
&\quad
+\int_D\int_{|y-x|\ge 1}\frac{u_0(x)^p+u_0(y)^p}{|x-y|^{n+sp}}\,dy\,dx\\
&\ls \int_D |x|^{(\frac{sp-n}{p-1}-1)p}\,dx
+ \int_D u_0(x)^p\,dx+\int_D u_0(y)^p\,dy\\
&<\infty.
\end{align*}
This proves $u_0\in W^{s,p}(D)$.
Also, it is easy to see that $u_0\in L_{sp}^{p-1}(\rn)$.
According to
  Theorem \ref{thm-solution}, for any $0\le \phi\in C_c^\infty(D)$, it holds that
  $$\langle (-\Delta)^s_pu_0,\ \phi\rangle=c_{n,s,p}^\ast\phi(0)=0,$$
     that is,
\begin{equation*}
		(-\Delta)^s_pu_0=0\quad \text{weakly in}\ \, D.
\end{equation*}
By these facts, we derive from Definition \ref{def-weaksolu} that $u_0$ is a weak $(s,p)$-solution    in $D$.

Suppose that $v \in C(\overline D)$ is a weak $(s,p)$-solution   in $D$ such that
$u_0\ge  v$ on $\partial D$ and almost everywhere on $\rn\setminus D$.
Since $v$ is a  weak $(s,p)$-solution   in $D$,
it follows from  Definition \ref{def-weaksolu} that $v\in W_\loc^{s,p}(D)$ and $v_-\in L_{sp}^{p-1}(\rn)$.
For any $\epsilon>0$, by the continuity of $u_0$ and $v$ on the compact set $\overline D$, there exists an open set $D_\epsilon\Subset D$ such that
$$
u_0\ge v-\epsilon \quad \text{pointwisely on}\ D\setminus D_\epsilon.
$$
This allows us to apply Lemma \ref{lem-KKP-CP2} to compare $u_0$ and $v-\epsilon$, which implies
$$
u_0\ge v-\epsilon \quad \text{almost everywhere on}\ \rn.
$$
Letting $\epsilon\to0$ gives that $
u_0\ge v$ almost everywhere on $\rn$. This proves that $u_0$ satisfies Definition \ref{def-harmonic}(iii).  Thus,
$u_0$
 is $(s,p)$-superharmonic  in $\rn\setminus\{0\}$.

 In a similar manner, we can show that  $u_0$
 is $(s,p)$-lowerharmonic  in $\rn\setminus\{0\}$. Altogether, we obtain that
 $u_0$
 is $(s,p)$-harmonic  in $\rn\setminus\{0\}$.
\end{proof}

In addition to Lemma \ref{lem-KKP-CP2}, we will also need the following general comparison principle for super/sub-harmonic functions (see \cite[Theorem~16]{KorvenpaaKuusiPalatucci2017MathAnn}).

\begin{lemma}\label{lem-KKP-CP-general}
Let $s\in(0,1)$, $p\in(1,\infty)$ and $\Omega$ be an  open subset of $\rn$.
Assume that $u$ is $(s,p)$-superharmonic in $\Omega$ and $v$ is $(s,p)$-subharmonic in $\Omega$.
If $u\ge v$ a.e. on  $\rn\setminus \Omega$ and
\begin{align}\label{eq1-CP}
\liminf_{\gfz{y\in\Omega}{y\to x}} u(y)\ge \limsup_{\gfz{y\in\Omega}{y\to x}} v(y) \quad \text{for all}\  x\in\partial\Omega
\end{align}
such that both sides are not simultaneously $+\infty$ or $-\infty$,  then $u\ge v$ a.e. on  $\rn$.
\end{lemma}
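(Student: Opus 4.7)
The plan is to deduce the general comparison inequality by combining the two one-sided comparison principles built into the very definition of $(s,p)$-super/sub-harmonicity (Definition \ref{def-harmonic}(iii)) with a boundary-strict-inequality trick and an exhaustion by smooth subdomains on which the nonlocal Dirichlet problem is solvable.

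First I would make some reductions. By replacing $u$ with $\min\{u,M\}$ and $v$ with $\max\{v,-M\}$ (which preserve super- and sub-harmonicity, the hypothesis $u\ge v$ off $\Omega$, and the boundary condition \eqref{eq1-CP}), we may assume both functions are bounded; letting $M\to\infty$ at the end recovers the original statement. If $\Omega$ is unbounded, I would exhaust it by $\Omega\cap B_R$, using the global a.e.\ inequality $u\ge v$ on $\rn\setminus\Omega$ to extend the external data as $R\to\infty$. Then, fixing $\epsilon>0$ and setting $v_\epsilon:=v-\epsilon$, the lower semicontinuity of $u$, the upper semicontinuity of $v$, and the boundary hypothesis \eqref{eq1-CP} produce an open neighborhood $U$ of $\partial\Omega$ with $u\ge v_\epsilon$ pointwise on $\Omega\cap U$, so that the potential violation set is contained in a relatively compact subset of $\Omega$.

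Next, I would pick a smooth open set $D$ with $(\Omega\setminus U)\Subset D\Subset\Omega$ and solve the nonlocal Dirichlet problem
\begin{equation*}
(-\Delta)^s_p h = 0\ \text{weakly in } D,\qquad h = v_\epsilon \ \text{a.e.\ on } \rn\setminus D,
\end{equation*}
obtaining a continuous weak $(s,p)$-solution $h\in C(\overline D)\cap W^{s,p}(D)$ (existence follows from the direct method in the fractional $W^{s,p}$-energy and from the boundary-regularity theory for $(-\Delta)^s_p$ developed in the references cited in the excerpt). With $h$ in hand, I would apply Definition \ref{def-harmonic}(iii) twice: since $v_\epsilon$ remains $(s,p)$-subharmonic and agrees with $h$ off $D$, one obtains $v_\epsilon\le h$ on $D$; since $u$ is $(s,p)$-superharmonic with $u\ge v_\epsilon=h$ a.e.\ on $\rn\setminus D$ and $\liminf_{D\ni y\to x}u(y)\ge h(x)$ on $\partial D$ (by semicontinuity and continuity of $h$), one obtains $u\ge h$ on $D$. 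Chaining these gives $u\ge h\ge v_\epsilon$ on $D$, and combining with Step 2 yields $u\ge v_\epsilon$ on all of $\Omega$. Letting $\epsilon\to 0^+$ finishes the proof.

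The main obstacle is the construction of the continuous intermediate solution $h$ in Step 3: the nonlocal, quasilinear nature of $(-\Delta)^s_p$ means that solvability of the Dirichlet problem with boundary regularity up to $\partial D$ is not immediate and relies on the existence/regularity machinery for the fractional $p$-Laplacian (variational solvability plus the H\"older continuity estimates of Di Castro--Kuusi--Palatucci and Kuusi--Mingione--Sire). Once that intermediate object is available, the two-sided comparison is a clean application of the one-sided principle already encoded in the definition of $(s,p)$-harmonicity, and the boundary-strict-inequality shift by $\epsilon$ is the standard device that converts the mixed liminf/limsup hypothesis into a pointwise comparison on a compactly contained subdomain.
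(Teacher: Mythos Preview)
The paper does not give its own proof of Lemma~\ref{lem-KKP-CP-general}; it is quoted from \cite[Theorem~16]{KorvenpaaKuusiPalatucci2017MathAnn}, so there is no in-paper argument to compare against. Your outline via an intermediate harmonic function is the right strategy, but Step~3 has a real gap: to invoke Definition~\ref{def-harmonic}(iii) twice you need $h\in C(\overline D)$, and the boundary-regularity machinery you cite (cf.\ Proposition~\ref{prop-DP} here) requires the external datum to be continuous. Your datum $v_\epsilon$ is only upper semicontinuous, so continuity of $h$ up to $\partial D$ is not justified as written, and without it neither comparison $v_\epsilon\le h$ nor $u\ge h$ follows from Definition~\ref{def-harmonic}(iii). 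A secondary problem: the exhaustion by $\Omega\cap B_R$ for unbounded $\Omega$ is circular, since on $\Omega\setminus B_R$ you do not yet know $u\ge v$, and \eqref{eq1-CP} gives you nothing on $\partial B_R\cap\Omega$.

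The cleanest repair bypasses $h$. After truncation, $u$ and $-v$ are locally bounded $(s,p)$-superharmonic in $\Omega$; by Lemma~\ref{lem-SHSL}(i) they are weak super/subsolutions and hence lie in $W^{s,p}_{\loc}(\Omega)$. Your $\epsilon$-shift still yields $D\Subset\Omega$ with $u\ge v_\epsilon$ a.e.\ on $\rn\setminus D$, and now Lemma~\ref{lem-KKP-CP2} applies directly. If you prefer to keep the intermediate-solution scheme, you must first approximate the u.s.c.\ datum $v_\epsilon$ from above by continuous functions, solve for each, and pass to the limit.
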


At the end of this section, we state the following results that are from \cite[Theorems~1, 9, 12, 13]{KorvenpaaKuusiPalatucci2017MathAnn}, which
clarify the relations between $(s,p)$-harmonic functions in Definition \ref{def-harmonic} and weak solutions in Definition \ref{def-weaksolu}.

\begin{lemma}\label{lem-SHSL}
Let $s\in(0,1)$, $p\in(1,\infty)$ and $\Omega$ be an  open subset of $\rn$.
\begin{enumerate}
  \item[\rm (i)]  If $u$ is an $(s,p)$-superharmonic function in $\Omega$ such that either $u$ is locally bounded or $u\in W^{s,p}_\loc(\Omega)$, then $u$ is a weak $(s,p)$-supersolution in $\Omega$, that is,
      \begin{equation*}
		(-\Delta)^s_pu\ge 0\ \ \text{weakly in}\ \ \Omega.
		\end{equation*}

      \item[\rm (ii)] If $u\in W^{s,p}(\Omega)$ is a weak $(s,p)$-supersolution in $\Omega$,
then
 \begin{align*}
 u(x)=\liminf_{\gfz{y\in\Omega}{y\to x}} u(y) \quad \text{for a.e.}\  x\in\Omega.
\end{align*}
Moreover,  the lower semicontinuous representative of $u$, denote by
$$\tilde u(x)=\liminf_{\gfz{y\in\Omega}{y\to x}} u(y)  \quad \text{for all}\  x\in\Omega,$$
is an $(s,p)$-superharmonic function in $\Omega$. In particular, for any open set $D\Subset\Omega$,
$$\inf_D \tilde u=\einf_D \tilde u.$$

  \item[\rm (iii)] A function $u$ is $(s,p)$-harmonic  in $\Omega$ if and only if $u$ is a continuous weak solution  in $\Omega$.

\end{enumerate}

\end{lemma}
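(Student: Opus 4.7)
The plan is to establish the three parts separately, using the weak Harnack inequality (Lemma \ref{lem-Harnack}), the H\"older-type estimate (Lemma \ref{lem-hanack2}), and the comparison principles (Lemmas \ref{lem-KKP-CP2} and \ref{lem-KKP-CP-general}). These assertions form the backbone of the nonlocal nonlinear potential theory developed in \cite{KorvenpaaKuusiPalatucci2017MathAnn}, and my proof mirrors that approach.

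For part (i), I would test $(s,p)$-superharmonicity against weak solutions produced by an obstacle problem. Fix an open set $D \Subset \Omega$ and $0 \le \phi \in C_c^\infty(D)$. For small $\epsilon > 0$, consider the minimizer $v_\epsilon$ of the Gagliardo energy over all $w \in u + W_0^{s,p}(D)$ with $w \ge u - \epsilon\phi$ in $D$; when $u$ is only locally bounded rather than in $W^{s,p}_\loc(\Omega)$, one first truncates by $\min\{u,k\}$ and passes to the limit $k \to \infty$, invoking the tail control $u_- \in L^{p-1}_{sp}(\rn)$ to keep estimates uniform. The minimizer $v_\epsilon$ is a weak supersolution on $D$, agrees with $u$ outside $D$, and is a weak solution on the strict non-contact set $\{v_\epsilon > u - \epsilon\phi\}$. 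The comparison property built into Definition \ref{def-harmonic}(iii) forces $u \ge v_\epsilon$ on $D$, while minimality combined with the weak Euler--Lagrange equation for $v_\epsilon$ yields, after dividing by $\epsilon$ and sending $\epsilon \to 0^+$, the inequality $\langle (-\Delta)^s_p u,\phi\rangle \ge 0$. This is the most delicate step, because it requires the full existence-and-regularity theory for the nonlocal obstacle problem.

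For part (ii), I would start from \eqref{eq-WH}: applied to $u - \einf_{2B} u$ on balls $B = B(x,r)$ with $2B \Subset \Omega$, the weak Harnack inequality shows that $r \mapsto \einf_{B(x,r)} u$ admits a limit as $r \to 0^+$ for every $x \in \Omega$ and that this limit coincides with $u(x)$ at almost every $x$, via Lebesgue differentiation against the $L^t$ bound in \eqref{eq-WH}. This identifies $\tilde u(x) := \liminf_{y \to x,\, y \in \Omega} u(y)$ as the canonical lower semicontinuous representative of $u$, satisfying $\tilde u = u$ a.e.\ and $\inf_D \tilde u = \einf_D \tilde u$ on open $D \Subset \Omega$. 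To verify Definition \ref{def-harmonic}(iii) for $\tilde u$, I would mimic the approximation used in the proof of Proposition \ref{prop-harmonic}: given a weak solution $v \in C(\overline D)$ on $D$ with $\tilde u \ge v$ on $\partial D$ and a.e.\ on $\rn \setminus D$, lower semicontinuity of $\tilde u$ and continuity of $v$ supply, for any $\epsilon > 0$, an open set $D_\epsilon \Subset D$ with $\tilde u + \epsilon \ge v$ pointwise on $D \setminus D_\epsilon$; Lemma \ref{lem-KKP-CP2} applied on $D_\epsilon$ then gives $\tilde u + \epsilon \ge v$ a.e.\ on $\rn$, and sending $\epsilon \to 0^+$ yields the conclusion.

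For part (iii), the ``only if'' direction proceeds as follows: an $(s,p)$-harmonic function is simultaneously lower and upper semicontinuous on $\Omega$, hence continuous and in particular locally bounded, so part (i) applies to both $u$ and $-u$ (using the oddness identity $(-\Delta)^s_p(-u) = -(-\Delta)^s_p u$) to yield that $u$ is a continuous weak $(s,p)$-solution. Conversely, a continuous weak solution $u$ already lies in $W^{s,p}_\loc(\Omega)$, so part (ii) applied to $u$ forces $u = \tilde u$ (by continuity) and delivers $(s,p)$-superharmonicity of $u$; the same argument applied to $-u$ gives $(s,p)$-subharmonicity, and hence $u$ is $(s,p)$-harmonic. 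Throughout, the tail integrability $u_\pm \in L^{p-1}_{sp}(\rn)$ is indispensable to keep the weak formulations and pointwise comparisons well-defined.
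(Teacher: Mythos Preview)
The paper does not prove this lemma at all; it is stated without proof as a direct citation of Theorems~1, 9, 12, and 13 from \cite{KorvenpaaKuusiPalatucci2017MathAnn}. Your sketch follows the strategy of that reference (obstacle-problem approximation for (i), weak Harnack plus Lebesgue differentiation for the lower semicontinuous representative in (ii), combining (i) and (ii) for (iii)), so in spirit it matches what the cited source does.

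One caveat on part (ii): the weak Harnack inequality as recorded in Lemma~\ref{lem-Harnack} is stated only for supersolutions that are nonnegative on all of $\Omega$, whereas $u-\einf_{2B}u$ need not be nonnegative outside $2B$; in the nonlocal setting the operator sees the entire function, so this is not a harmless shift. The actual argument in \cite{DiCastroKuusiPalatucci2014JFA, KorvenpaaKuusiPalatucci2017MathAnn} uses the full weak Harnack inequality with a nonlocal tail term, and you would need to invoke that stronger version (or handle the tail explicitly) rather than the simplified form quoted in Lemma~\ref{lem-Harnack}.
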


\begin{remark}
In the proof of Theorem \ref{thm-mr} below, we will apply Lemma \ref{lem-KKP-CP-general} to compare $\tilde u$ and the fundamental solution in certain domain $\Omega$, with $\tilde u$ being the lower semicontinuous representative of a weak $(s,p)$-supersolution $u$ in $\Omega$.
\end{remark}

\subsection{Behaviour of weak supersolutions}\label{ss3.3}

As a consequence of the previous Lemmas \ref{lem-Harnack} and \ref{lem-hanack2}, we show the following theorem for weak supersolutions, which will be used to show the nonexistence of nontrivial nonnegative solutions to the fractional $p$-Laplace inequality \eqref{eq-p-Lapineq} under the case $sp\ge n$ or $sp<n$ and $q<\frac{n(p-1)}{n-sp}$.

\begin{theorem}\label{thm-harnack}
Let $s\in (0,1)$, $p\in (1,\infty)$  and $\Omega$ be an open set in $\rn$. Assume that
 $0\le u\in W_\loc^{s,p}(\Omega)$  such that
\begin{equation*}
		(-\Delta)^s_pu\ge 0\ \ \text{weakly in}\ \ \Omega.
		\end{equation*}
  Suppose that $B=B(x_0, r)$ such that $8B\Subset\Omega$, where $x_0\in\rn$ and $r\in (0,\infty)$.
For any $x\in\rn$, define
$$
\phi_r(x):=\eta\left(\frac{x-x_0}{ r}\right),
$$
where $\eta\in C_c^\infty(\rn)$ such that
$\supp\eta\subseteq B(0,2)$, $\eta\equiv 1$ on $B(0,1)$ and $0\le\eta\le 1$.
Then
\begin{align}\label{eq-mr110}
&\int_\rn\int_\rn \frac{|u(x)-u(y)|^{p-1}|\phi_r(x)-\phi_r(y)|}{|x-y|^{n+sp}}\,dy\,dx \le C r^{n-sp}\left(\einf_{2B} u\right)^{p-1},
\end{align}
where $C$ is  a positive constant depending only on $s,p$, $n$  and  $\eta$.
\end{theorem}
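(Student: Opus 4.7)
The plan is to exploit the weak supersolution inequality $\langle(-\Delta)^s_pu,\phi_r\rangle\ge 0$ itself as the mechanism for controlling the ``bad'' tail contributions, and to reduce everything else to Lemma \ref{lem-hanack2} and the weak Harnack inequality (Lemma \ref{lem-Harnack}). Since $\supp\phi_r\subseteq 2B$, the integrand in \eqref{eq-mr110} vanishes on $(2B)^c\times(2B)^c$, so by the symmetry in $(x,y)$ I decompose
\[
I:=\int_\rn\int_\rn\frac{|u(x)-u(y)|^{p-1}|\phi_r(x)-\phi_r(y)|}{|x-y|^{n+sp}}\,dy\,dx = I_1+2I_2,
\]
where $I_1$ is the piece restricted to $2B\times 2B$ and $I_2$ the piece over $2B\times(2B)^c$. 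I then introduce the corresponding signed quantities $J_1,J_2$, obtained by replacing the absolute value by the signed product $|u(x)-u(y)|^{p-2}(u(x)-u(y))(\phi_r(x)-\phi_r(y))$, and write $J_i=J_i^+-J_i^-$ as a difference of positive and negative parts, so that $I_i=J_i^++J_i^-$. Since $\phi_r\in C_c^\infty(\Omega)$ with $\phi_r\ge0$, the weak supersolution condition becomes $J_1+2J_2\ge 0$, i.e.\ $J_1^-+2J_2^-\le J_1^++2J_2^+$, whence
\[
I=(J_1^++2J_2^+)+(J_1^-+2J_2^-)\le 2(J_1^++2J_2^+)\le 2I_1+4J_2^+.
\]
It therefore suffices to bound $I_1$ and $J_2^+$ separately by $Cr^{n-sp}(\einf_{2B}u)^{p-1}$.

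For $I_1$, I exploit the H\"older-type cutoff estimate $|\phi_r(x)-\phi_r(y)|\le C\min\{1,r^{-1}|x-y|\}\le Cr^{-\theta}|x-y|^{\theta}$, valid for any $\theta\in[0,1]$, with $\theta\in(s,\min\{1,sp\})$; this interval is nonempty because $s<1$ and $s<sp$ (as $p>1$). With this choice, $h:=(sp-\theta)/(p-1)\in(0,s)$ and $\nu:=p-1\in(0,\bar\nu)$, so Lemma \ref{lem-hanack2} applied to the ball $2B$ (permissible since $4\cdot 2B=8B\Subset\Omega$) yields
\[
\int_{4B}\int_{4B}\frac{|u(x)-u(y)|^{p-1}}{|x-y|^{n+sp-\theta}}\,dy\,dx\le Cr^{n-sp+\theta}(\einf_{2B}u)^{p-1},
\]
and multiplying by $Cr^{-\theta}$, together with $2B\times 2B\subseteq 4B\times 4B$, gives $I_1\le Cr^{n-sp}(\einf_{2B}u)^{p-1}$.

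For $J_2^+$, whose integration region is $\{(x,y):x\in\supp\phi_r,\ y\in(2B)^c,\ u(x)>u(y)\}$, I use that $\phi_r(x)-\phi_r(y)=\phi_r(x)\ge0$ there and that, since $u(x)>u(y)\ge0$, $(u(x)-u(y))^{p-1}\le u(x)^{p-1}$. Because $\eta\in C_c^\infty(\rn)$ has compact support inside the \emph{open} ball $B(0,2)$, there exists $\delta_\eta\in(0,1)$, depending only on $\eta$, with $\supp\phi_r\subseteq\overline{B(x_0,(2-\delta_\eta)r)}$; hence every $x\in\supp\phi_r$ lies at distance at least $\delta_\eta r$ from $(2B)^c$, and by \eqref{eq-basic2}
\[
\int_{(2B)^c}\frac{dy}{|x-y|^{n+sp}}\le\int_{|x-y|\ge\delta_\eta r}\frac{dy}{|x-y|^{n+sp}}\le Cr^{-sp}.
\]
Invoking Lemma \ref{lem-Harnack} for $u$ on $2B$ (with $4B\Subset\Omega$) at exponent $t=p-1<\bar t$ yields $\int_{2B}u^{p-1}\,dx\le Cr^n(\einf_{2B}u)^{p-1}$, whence $J_2^+\le Cr^{n-sp}(\einf_{2B}u)^{p-1}$.

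The main obstacle is the tail piece $J_2^-$: it contains $u(y)^{p-1}$ for $y$ arbitrarily far from $x_0$ and cannot be controlled a priori, since we do not assume $u\in L_{sp}^{p-1}(\rn)$. The crucial point is that the weak supersolution inequality itself supplies precisely the domination $J_2^-\le J_1^+/2+J_2^+$ needed to absorb it into the already-estimated quantities $I_1$ and $J_2^+$. Concatenating the estimates above yields \eqref{eq-mr110}.
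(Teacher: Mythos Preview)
Your proof is correct and takes a genuinely different route from the paper's. The paper splits the double integral at the scale $4B$ rather than $2B$, but more importantly it never invokes the supersolution inequality $\langle(-\Delta)^s_pu,\phi_r\rangle\ge 0$ at all. Instead, for the cross term (their $\mathrm{I}_2$ over $4B\times(4B)^c$) the paper uses that $u\ge0$ implies $|u(x)-u(y)|^{p-1}\le u(x)^{p-1}+u(y)^{p-1}$; the $u(x)^{p-1}$-piece is handled exactly as your $J_2^+$, while the troublesome $u(y)^{p-1}$-tail is estimated directly by decomposing $(4B)^c$ into dyadic annuli $\{4^jr\le|y-x_0|<4^{j+1}r\}$ and applying the weak Harnack inequality (Lemma~\ref{lem-Harnack}) on each annulus, then summing the geometric series $\sum_j 4^{-jsp}$. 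For the local piece $\mathrm{I}_1$ the paper uses H\"older's inequality with an auxiliary exponent $\bar p\in(p-1,\bar\nu)$ before calling Lemma~\ref{lem-hanack2}, whereas your direct use of the interpolation bound $|\phi_r(x)-\phi_r(y)|\le Cr^{-\theta}|x-y|^\theta$ with $\theta\in(s,\min\{1,sp\})$ lands immediately on Lemma~\ref{lem-hanack2} at $\nu=p-1$ and is cleaner.

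Your absorption trick---using $J_1+2J_2\ge0$ to dominate $J_2^-$ by $J_1^++2J_2^+$---is the more elegant way to dispose of the far-field $u(y)^{p-1}$ contribution, and it sidesteps the infinite family of Harnack applications. The paper's dyadic-annulus argument, by contrast, yields a standalone quantitative bound on the nonlocal tail $\int_{(4B)^c}u(y)^{p-1}|y-x_0|^{-n-sp}\,dy$ in terms of $\einf_{2B}u$, which is of some independent interest but not needed for the theorem as stated.
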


\begin{proof}

For the double-integral in the  left hand side of \eqref{eq-mr110}, we split it as $$
\left(\int_{4B}\int_{4B}
+\int_{4B}\int_{(4B)^c}
+ \int_{(4B)^c}\int_{4B}
+\int_{(4B)^c}\int_{(4B)^c}
 \right) \cdots.
$$
Denote these four double-integrals by ${\rm I}_1$, ${\rm I}_2$, ${\rm I}_3$
and ${\rm I}_4$, respectively.
Observe that  ${\rm I}_2={\rm I}_3$ by symmetry.
Moreover, since $\supp\phi_r\subseteq 2B$, it follows that
$$
{\rm I}_4=
\int_{(4B)^c}\int_{(4B)^c}
 \frac{|u(x)-u(y)|^{p-1}|\phi_r(x)-\phi_r(y)|}{|x-y|^{n+sp}}\,dy\,dx
=0.
$$
Thus, to obtain \eqref{eq-mr110}, it suffices to estimate ${\rm I}_1$ and ${\rm I}_2$.

In order to estimate ${\rm I}_1$, we will apply  Lemma \ref{lem-hanack2}.
Observing that $\bar \nu$ defined in \eqref{eq-bar-nu} is strictly larger than $p-1$, we take $\bar p$ such that
$$
p-1<\bar p< \bar \nu.
$$
Since $s\in (0,1)$, we can take $h\in (0,s)$ sufficiently close to $s$ to satisfy
$$
p(s-h)+h<1.
$$
Applying the H\"older inequality with $\frac{1}{\bar p/(p-1)}+ \frac 1{\bar p/(\bar p-p+1)}=1$, we obtain
\begin{align*}
  {\rm I}_1 &= \int_{4B}\int_{4B}
 |u(x)-u(y)|^{p-1} |\phi_r(x)-\phi_r(y)||x-y|^{h\bar p-sp} \,
 \frac{dy\,dx}{|x-y|^{n+h \bar p}}\\
 &\le \left(
 \int_{4B}\int_{4B}
 \frac{|u(x)-u(y)|^{\bar p}}{|x-y|^{n+h \bar p}}\,dy\,dx
 \right)^{\frac{p-1}{\bar p}}\\
 &\quad\times
 \left(
 \int_{4B}\int_{4B}
 \frac{(|\phi_r(x)-\phi_r(y)||x-y|^{h\bar p-sp})^{\frac{\bar p}{\bar p-p+1}}}{|x-y|^{n+h \bar p}}\,dy\,dx
 \right)^{\frac{\bar p-p+1}{\bar p}}
  \\
  &\ls \left( r^{\frac{n}{\bar p}-h}\einf_{2B} u\right)^{p-1}
  \left(
 \int_{4B}\int_{4B}|\phi_r(x)-\phi_r(y)|^{\frac{\bar p}{\bar p-p+1}}
 |x-y|^{- \frac{\bar p[p(s-h)+h]}{\bar p-p+1}-n}\,dy\,dx
 \right)^{\frac{\bar p-p+1}{\bar p}},
\end{align*}
where the last step is due to \eqref{eq-Holder}. By the mean value theorem and the definition of $\phi_r$,
one has
\begin{align*}
  |\phi_r(x)-\phi_r(y)|\le \sup_{\theta\in(0,1)} |\nabla \phi_r(x+\theta(y-x))| |x-y|
  \le \|\nabla\phi\|_{L^\infty(\rn)} r^{-1}|x-y|
\end{align*}
and, hence, by $p(s-h)+h<1$,
\begin{align*}
&  \left(
 \int_{4B}\int_{4B}|\phi_r(x)-\phi_r(y)|^{\frac{\bar p}{\bar p-p+1}}
 |x-y|^{- \frac{\bar p[p(s-h)+h]}{\bar p-p+1}-n}\,dy\,dx
 \right)^{\frac{\bar p-p+1}{\bar p}}\\
  &\quad \ls  r^{-1}
 \left(\int_{4B}\int_{|x-y|< 8 r}
 |x-y|^{(1-p(s-h)-h)\frac{\bar p}{\bar p-p+1}-n}\,dy\,dx\right)^{\frac{\bar p-p+1}{\bar p}}\\
 &\quad \ls  r^{-1}
 \left(\int_{4B}\int_0^{8 r}
\rho^{(1-p(s-h)-h)\frac{\bar p}{\bar p-p+1}-1}\,d\rho\,dx\right)^{\frac{\bar p-p+1}{\bar p}}\\
 &\quad \ls  r^{-1}
 \left(\int_{4B}r^{(1-p(s-h)-h)\frac{\bar p}{\bar p-p+1}}\,dx\right)^{\frac{\bar p-p+1}{\bar p}}\\
 &\quad \simeq  r^{-p(s-h)-h+n\frac{\bar p-p+1}{\bar p}}.
\end{align*}
Thus, we have
\begin{align}\label{eq-I1}
  {\rm I}_1
  \ls \left( r^{\frac{n}{\bar p}-h}\einf_{2B} u\right)^{p-1}
  r^{-p(s-h)-h+n\frac{\bar p-p+1}{\bar p}}
  \simeq
 r^{n-sp} \left( \einf_{2B} u\right)^{p-1}.
  \end{align}

   Now, we turn to the estimate of ${\rm I}_2$. Invoking $\supp \phi_r\subseteq 2B$ and $0\le\phi_r\le 1$, we write
   \begin{align*}
   {\rm I}_2
   &=\int_{4B}\int_{(4B)^c} \frac{|u(x)-u(y)|^{p-1}|\phi_r(x)|}{|x-y|^{n+sp}}\,dy\,dx
   \le\int_{2B}\int_{(4B)^c} \frac{|u(x)-u(y)|^{p-1}}{|x-y|^{n+sp}}\,dy\,dx.
   \end{align*}
     Since $u\ge 0$, it follows that either $|u(x)-u(y)|\le u(x)$
   or $|u(x)-u(y)|\le u(y)$, which further gives
    \begin{align*}
   {\rm I}_2
   &\le \int_{2B}\int_{(4B)^c} \frac{u(x)^{p-1}}{|x-y|^{n+sp}}\,dy\,dx
   +\int_{2B}\int_{(4B)^c} \frac{u(y)^{p-1}}{|x-y|^{n+sp}}\,dy\,dx.
   \end{align*}
 For any $x\in 2B$ and $y\notin 4B$,  we observe that $|y-x|>2r$. This, along with \eqref{eq-basic2} and the nonlocal weak  Harnack inequality \eqref{eq-WH} (taking $t$ therein to be $p-1$), further derives
\begin{align*}
\int_{2B}\int_{(4B)^c} \frac{u(x)^{p-1}}{|x-y|^{n+sp}}\,dy\,dx
&\le \int_{2B}\int_{|y-x|>2r} \frac{u(x)^{p-1}}{|x-y|^{n+sp}}\,dy\,dx\\
 & \ls r^{-sp} \int_{2B}  u(x)^{p-1}\,dx\\
&\ls r^{n-sp} \left(\einf_{4B} u\right)^{p-1}\\
&\ls r^{n-sp} \left(\einf_{2B} u\right)^{p-1}.
\end{align*}
Meanwhile, if $x\in 2B$ and $y\notin 4B$, then
 $|y-x_0|\le |x-y| +|x-x_0|\le |x-y|+|y-x_0|/2$ and, hence,
  $|y-x_0|/2\le |x-y|$.
So, upon splitting $(4B)^c$ into annulus, we then utilize  \eqref{eq-WH}  again to derive
\begin{align*}
 \int_{2B}\int_{(4B)^c} \frac{u(y)^{p-1}}{|x-y|^{n+sp}}\,dy\,dx
 &\le 2^{n+sp}\int_{2B}\int_{(4B)^c} \frac{u(y)^{p-1}}{|y-x_0|^{n+sp}}\,dy\,dx\\
 &=2^{n+sp}\omega_{n-1} r^n \sum_{j=1}^\infty \int_{4^j r\le |y-x_0|< 4^{j+1}r} \frac{u(y)^{p-1}}{|y-x_0|^{n+sp}}\,dy\\
 &\le 2^{n+sp}\omega_{n-1} r^n\sum_{j=1}^\infty (4^jr)^{-sp-n}
 \int_{|y-x_0|< 4^{j+1}r} u(y)^{p-1}\,dy\\
 &\ls  r^n\sum_{j=1}^\infty (4^jr)^{-sp}
 \left(\einf_{{\mathbb B}_{4^{j+2}r}} u\right)^{p-1}\\
 &\ls  r^{n-sp} \left(\einf_{2B} u\right)^{p-1}.
\end{align*}
Combining the last two formulae gives
\begin{align}\label{eq-I2}
{\rm I}_2 \ls  r^{n-sp} \left(\einf_{2B} u\right)^{p-1}.
\end{align}

By \eqref{eq-I1} and \eqref{eq-I2}, together with ${\rm I}_2={\rm I}_3$ and
${\rm I}_4=0$, we then deduce that \eqref{eq-mr110} holds.
\end{proof}

The following result concerns the existence and continuity of solutions to a Dirichlet problem,
whose proof is based on some already known results of the obstacle problem for
$(-\Delta)^s_p$ that has been obtained in \cite{KorvenpaaKuusiPalatucci2016CVPDE, DiCastroKuusiPalatucci2016AIHP}.

\begin{proposition}\label{prop-DP}
Let $s\in(0,1)$, $p\in(1,\infty)$ and $\Omega$ be a bounded open set in $\rn$.
Given any function $g\in W^{s,p}(\rn)$, there exists a unique solution $u\in W^{s,p}(\rn)$ to the following
Dirichlet problem
\begin{align}\label{eq-DP0}
\begin{cases}
(-\Delta)^s_p u=0 &\quad\text{weakly in}\ \Omega;\\
u=g  &\quad\text{a.e on}\ \, \rn\setminus \Omega.
\end{cases}
\end{align}
Further, if we assume that $g$ is continuous on $\rn$ and the set $\rn\setminus\Omega$ satisfies the {\it measure density condition}, that is, there exists $\delta_\Omega\in(0,1)$ and $r_\Omega\in(0,\infty)$ such that for all $x\in\partial\Omega$,
\begin{align}\label{eq-mdc}
\inf_{r\in(0, \, r_\Omega)}\frac{|(\rn\setminus\Omega)\cap B(x,r)|}{|B(x,r)|}\ge \delta_\Omega,
\end{align}
then the solution $u$  to \eqref{eq-DP0} is almost everywhere equal to a continuous function on $\rn$.
\end{proposition}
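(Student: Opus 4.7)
The plan is to establish existence and uniqueness first by a direct variational argument, and then to piece together interior and boundary regularity to obtain continuity.

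For the existence and uniqueness portion, consider the closed convex set
$$
\mathcal{K}_g := \{v \in W^{s,p}(\rn) : v = g \ \text{a.e.\ on}\ \rn\setminus\Omega\},
$$
which is non-empty because $g \in \mathcal{K}_g$, and introduce the strictly convex energy functional
$$
\mathcal{E}(v) := \frac{1}{2p}\int_\rn\int_\rn \frac{|v(x)-v(y)|^p}{|x-y|^{n+sp}}\,dy\,dx.
$$
Since $\Omega$ is bounded, a fractional Poincar\'e-type inequality on the affine space $g + W^{s,p}_0(\Omega)$ combined with Fatou's lemma gives coercivity and weak lower semicontinuity of $\mathcal{E}$ on $\mathcal{K}_g$. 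The direct method then produces a minimizer $u \in \mathcal{K}_g$, and strict convexity forces uniqueness. A standard first-variation computation shows that this $u$ is a weak $(s,p)$-solution in $\Omega$ in the sense of Definition \ref{def-weaksolu}. Alternatively, one views the Dirichlet problem as the unconstrained limit (obstacle $\psi\equiv -\infty$) of the obstacle problem whose well-posedness is already established in \cite{KorvenpaaKuusiPalatucci2016CVPDE}. Uniqueness can also be verified independently using the comparison principle of Lemma \ref{lem-KKP-CP2}: if $u_1, u_2$ are two solutions, then $u_1 \ge u_2$ and $u_2 \ge u_1$ a.e.\ on $\rn$, so $u_1 = u_2$ a.e.

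For the continuity assertion, the interior regularity is supplied by the local H\"older continuity of weak $(s,p)$-solutions established in \cite{DiCastroKuusiPalatucci2014JFA, DiCastroKuusiPalatucci2016AIHP}: up to modification on a null set, $u$ has a locally H\"older-continuous representative $\tilde u$ in $\Omega$. It remains to show that at each boundary point $x_0 \in \partial\Omega$ one has $\tilde u(x) \to g(x_0)$ as $x\to x_0$ from within $\Omega$. This is the main technical step, and it is precisely where the measure density condition \eqref{eq-mdc} is essential. The thickness of $\rn\setminus\Omega$ near $\partial\Omega$ allows an oscillation-decay argument in the spirit of the classical Wiener-type boundary regularity theory: on each ball $B(x_0,r)$ with $r<r_\Omega$, a definite fraction of the annulus lies in $\rn\setminus\Omega$ where $u\equiv g$, enabling one to transfer the modulus of continuity of $g$ at $x_0$ into a corresponding modulus for $u$. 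This iteration, carried out for weak $(s,p)$-solutions in \cite{DiCastroKuusiPalatucci2016AIHP} and in its obstacle-problem counterpart in \cite{KorvenpaaKuusiPalatucci2016CVPDE}, yields the desired boundary continuity and thus a continuous representative of $u$ on $\rn$.

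The principal obstacle is the boundary continuity step; the existence statement and the interior H\"older estimate are standard in the fractional $p$-Laplace theory, but transferring the continuity of $g$ across $\partial\Omega$ is delicate and requires exactly the measure density machinery of \cite{DiCastroKuusiPalatucci2016AIHP, KorvenpaaKuusiPalatucci2016CVPDE}, so the proof essentially amounts to verifying that our setup matches the hypotheses of the appropriate boundary regularity theorem there.
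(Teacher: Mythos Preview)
Your proposal is correct and follows the same logical skeleton as the paper---variational existence and uniqueness, interior H\"older regularity from \cite{DiCastroKuusiPalatucci2016AIHP}, and boundary continuity via the measure density condition---but the paper executes the boundary step by a slightly different route. Rather than directly verifying that the global minimizer $u\in W^{s,p}(\rn)$ meets the hypotheses of the boundary regularity theorem, the paper works the other way around: it fixes an auxiliary bounded set $\Omega'\Supset\Omega$, invokes \cite[Theorem~1]{KorvenpaaKuusiPalatucci2016CVPDE} to produce the solution $w\in\mathcal K_g(\Omega,\Omega')$ of the \emph{local} obstacle problem (to which \cite[Theorem~9]{KorvenpaaKuusiPalatucci2016CVPDE} applies directly, yielding continuity on $\Omega'$), and then proves the non-obvious fact that this $w$ actually lies in $W^{s,p}(\rn)$ via an explicit splitting and tail estimate of $[w]_{W^{s,p}(\rn)}$. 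Once $w\in\mathcal K_g(\Omega)$, an energy comparison plus the uniqueness of the global minimizer forces $u=w$ a.e. Your direct approach---checking that $u$ itself solves the obstacle inequality in $\mathcal K_g(\Omega,\Omega')$ and then quoting the boundary theorem---is arguably more natural, but it requires a density step (extending the weak equation from $C_c^\infty(\Omega)$ test functions to all $v-u$ with $v\in\mathcal K_g(\Omega,\Omega')$); the paper's detour through $w$ sidesteps this by trading the density argument for an explicit $W^{s,p}(\rn)$-bound on $w$ and an appeal to uniqueness.
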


\begin{proof}
For any $g\in W^{s,p}(\rn)$, define
$$
\mathcal K_g(\Omega):=\left\{w\in W^{s,p}(\rn):\ \, w=g\ \text{a.e. on}\ \rn\setminus\Omega\right\}.
$$
According to \cite[Theorem~2.3]{DiCastroKuusiPalatucci2016AIHP}, there always exists a unique element $u\in \mathcal K_g(\Omega)$ such that
\begin{align}\label{eq-minimizer}
[u]_{ W^{s,p}(\rn)}=\min\left\{[v]_{ W^{s,p}(\rn)}:\, v\in \mathcal K_g(\Omega)\right\},
\end{align}
and, moreover, this minimizer  $u$ uniquely solves the Dirichlet problem \eqref{eq-DP0}.
If $g$ is continuous in $\Omega$, then it is proved in \cite[Theorem~1.2]{DiCastroKuusiPalatucci2016AIHP} that $u$ is also continuous in $\Omega$. Due to  the continuity of $g$  on $\rn$ and the second condition of \eqref{eq-DP0}, one has the continuity of $u$ on the interior of $\rn\setminus\Omega$. The problem is to obtain the continuity of $u$ on the boundary $\partial\Omega$. This issue has been addressed in \cite{KorvenpaaKuusiPalatucci2016CVPDE}.

Let $\Omega$ and $\Omega'$ be two open bounded sets in $\rn$ such that $\Omega\Subset\Omega'$.
With a prescribed function $g\in W^{s,p}(\Omega')\cap L_{sp}^{p-1}(\rn)$, define
$$
\mathcal K_g(\Omega,\Omega'):=\left\{w\in W^{s,p}(\Omega'):\ \, w=g\ \text{a.e. on}\ \rn\setminus\Omega\right\}.
$$
As was proved in \cite[Theorem~1]{KorvenpaaKuusiPalatucci2016CVPDE},
there is a unique solution $w\in \mathcal K_g(\Omega,\Omega') $ to the obstacle problem that satisfies
\begin{align}\label{eq-obsta}
\langle (-\Delta)^s_p w,\, v-w\rangle\ge 0\quad \text{for all}\ v\in \mathcal K_g(\Omega,\Omega'),
\end{align}
where the bracket $\langle \cdot,\cdot\rangle$ is understood as in \eqref{eq-spuphi}, that is,
\begin{align*}
&\langle (-\Delta)^s_p w,\, v-w\rangle\\
&\quad =
\frac 12 \int_{\rn}\int_{\rn}\frac{|w(x)-w(y)|^{p-2}(w(x)-w(y))[(v-w)(x)-(v-w)(y)]}{|x-y|^{n+sp}}
\,dy\,dx.
\end{align*}
For this solution
$w$ to the obstacle problem \eqref{eq-obsta} in $\mathcal K_g(\Omega,\Omega')$, if we assume that $g$ is continuous on $\Omega'$ and $\rn\setminus\Omega$ satisfies the measure density condition \eqref{eq-mdc},
then it is proved in \cite[Theorem~9]{KorvenpaaKuusiPalatucci2016CVPDE} that
$w$  is continuous on $\Omega'$.

 Observe that
$\mathcal K_g(\Omega)\subseteq \mathcal K_g(\Omega,\Omega')$ by their definitions. Now, since we have a stronger assumption of $g\in W^{s,p}(\rn)$, we claim that
$$w\in \mathcal K_g(\Omega).$$
To show this claim, it suffices to verify that $w\in W^{s,p}(\rn)$.
Indeed, by using
$\Omega\Subset\Omega'$ and $w\in \mathcal K_g(\Omega,\Omega')$, we are able to show that
 \begin{align}\label{eq-w-glob}
\|w\|_{W^{s,p}(\rn)}\ls (1+d^{-1}) \left(\|w\|_{W^{s,p}(\Omega')}+\|g\|_{W^{s,p}(\rn)}\right),
\end{align}
where $d:=\text{dist}(\Omega,\rn\setminus\Omega')$.   The details are as follows.
On the one hand, from $w=g$ a.e. on $\rn\setminus \Omega$
and $g\in W^{s,p}(\rn)\subseteq L^p(\rn)$,
together with
$$w\in \mathcal K_g(\Omega,\Omega')\subseteq W^{s,p}(\Omega')\subseteq L^p(\Omega'),$$
it follows that
$w\in L^p(\rn)$ with
$$
\|w\|_{L^{p}(\rn)}\le \|w\|_{L^{p}(\Omega')}+\|g\|_{L^{p}(\rn)}.
$$
On the other hand, via using $w=g$ a.e. on $\rn\setminus \Omega$, we write
\begin{align*}
[w]_{W^{s,p}(\rn)}^p
&=\int_{\Omega'}\int_{\Omega'} \frac{|w(x)-w(y)|^p}{|x-y|^{n+sp}}\,dy\,dx
+ \int_{\rn\setminus \Omega'}\int_{\rn\setminus\Omega'} \frac{|g(x)-g(y)|^p}{|x-y|^{n+sp}}\,dy\,dx\\
&\quad
+2\int_{\Omega'}\int_{\rn\setminus\Omega'} \frac{|w(x)-w(y)|^p}{|x-y|^{n+sp}}\,dy\,dx
\end{align*}
and
\begin{align*}
&\int_{\Omega'}\int_{\rn\setminus\Omega'} \frac{|w(x)-w(y)|^p}{|x-y|^{n+sp}}\,dy\,dx\\
&\quad=\int_{\Omega'\setminus\Omega}\int_{\rn\setminus\Omega'} \frac{|g(x)-g(y)|^p}{|x-y|^{n+sp}}\,dy\,dx
+\int_{\Omega}\int_{\rn\setminus\Omega'} \frac{|w(x)-w(y)|^p}{|x-y|^{n+sp}}\,dy\,dx,
\end{align*}
which directly implies
$$
[w]_{W^{s,p}(\rn)}^p \le
[u]_{W^{s,p}(\Omega')}^p
+3[g]_{W^{s,p}(\rn)}^p +2^{p}
 \int_{\Omega}\int_{\rn\setminus\Omega'} \frac{|w(x)|^p+|w(y)|^p}{|x-y|^{n+sp}}\,dy\,dx.
$$
Further, upon recalling that $d=\text{dist}(\Omega,\rn\setminus\Omega')$, we adopt \eqref{eq-basic2} to get
\begin{align*}
  \int_{\Omega}\int_{\rn\setminus\Omega'} \frac{|w(x)|^p}{|x-y|^{n+sp}}\,dy\,dx
  &\le \int_{\Omega} |w(x)|^p \left(\int_{|x-y|\ge d } \frac1{|x-y|^{n+sp}}\,dy \right)\,dx\\
  &
  \ls d^{-sp} \|w\|_{L^p(\Omega)}^p
\end{align*}
and
\begin{align*}
  \int_{\Omega}\int_{\rn\setminus\Omega'} \frac{|w(y)|^p}{|x-y|^{n+sp}}\,dy\,dx
  &\le \int_{\rn\setminus\Omega} |g(y)|^p \left(\int_{|x-y|\ge d } \frac1{|x-y|^{n+sp}}\,dx \right)\,dy\\
  &
  \ls d^{-sp} \|g\|_{L^p(\rn)}^p.
\end{align*}
Altogether, we obtain that $w\in W^{s,p}(\rn)$ such that \eqref{eq-w-glob} holds, whence  $w\in \mathcal K_g(\Omega)$.

As a consequence of \eqref{eq-obsta} and the H\"older inequality, we have for all $v\in \mathcal K_g(\Omega)$ that
\begin{align*}
 [w]_{ W^{s,p}(\rn)}^p
 &=\int_{\rn}\int_{\rn} \frac{|w(x)-w(y)|^{p-2}(w(x)-w(y))(w(x)-w(y))}{|x-y|^{n+sp}}\,dy\,dx\\
  &\le \int_{\rn}\int_{\rn}\frac{|w(x)-w(y)|^{p-2}(w(x)-w(y))(v(x)-v(y))}{|x-y|^{n+sp}}
\,dy\,dx\\
&\le \int_{\rn}\int_{\rn}\frac{|w(x)-w(y)|^{p-1}|v(x)-v(y)|}{|x-y|^{n+sp}}
\,dy\,dx\\
&\le \left(\int_{\rn}\int_{\rn}\frac{|w(x)-w(y)|^{(p-1)p'}}{|x-y|^{n+sp}}
\,dy\,dx\right)^{\frac 1{p'}}
\left(\int_{\rn}\int_{\rn}\frac{|v(x)-v(y)|^p}{|x-y|^{n+sp}}
\,dy\,dx\right)^{\frac 1 p}\\
&= [w]_{ W^{s,p}(\rn)}^{p-1}[v]_{ W^{s,p}(\rn)}^p.
\end{align*}
This, along with $w\in  \mathcal K_g(\Omega)$, implies
$$[w]_{ W^{s,p}(\rn)}=\min\left\{[v]_{ W^{s,p}(\rn)}:\, v\in \mathcal K_g(\Omega)\right\}.$$
Recalling the aforementioned fact that $u$ is a unique solution to
\eqref{eq-minimizer}, we are led to the conclusion that
$u=w$ a.e. on $\rn$.

Of course, in the obstacle problem \eqref{eq-obsta}, one can choose  arbitrary  bounded open sets
$\Omega'$ satisfying $\Omega\Subset\Omega'$, which in turn implies that $w$ is continuous on $\rn$. Thus, $u$ is almost everywhere equal to the continuous function $w$ on $\rn$. This ends the proof of Proposition \ref{prop-DP}.
\end{proof}

To prove the nonexistence of nontrivial nonnegative solutions to the fractional $p$-Laplace inequality \eqref{eq-p-Lapineq} under the case $sp<n$ and $q=\frac{n(p-1)}{n-sp}$, we will apply Proposition \ref{prop-DP} together with the comparison principles from Section \ref{ss3.3} to establish lower bounds for weak supersolutions at infinity.


\begin{theorem}\label{thm-mr}
  Let  $s\in (0,1)$ and $p\in (1,\infty)$ such that $sp<n$. Assume that $0\le u\in W_\loc^{s,p}(\rn)$
such that
		\begin{equation*}
		(-\Delta)^s_pu\ge 0\ \ \text{weakly in}\ \ \mathbb R^n.
		\end{equation*}
 If $u$ is  not  almost everywhere zero on $\rn$, then  the following statements hold:
\begin{enumerate}
\item[\rm (i)] there exists  $r_0\in(1,\infty)$ such that for all $r\in[r_0,\infty)$
    and for almost all $x\in\rn$,
    \begin{align*}
u(x)\ge \left(\inf_{\overline{\mathbb B}_{r}\setminus {\mathbb B}_1} u\right)  \left(1-|x|^{\frac{sp-n}{p-1}}\right);
\end{align*}
  \item[\rm (ii)]  there exists a large constant  $R_0\in(r_0,\infty)$, depending on $u$, such that for almost all $|x|\ge R_0$,
       $$u(x)\ge |x|^{\frac{sp-n}{p-1}},$$
 \end{enumerate}
where ${\mathbb B}_r:=\{x\in\rn:\ |x|< r\}$ and $\overline{\mathbb B}_r:=\{x\in\rn:\ |x|\le r\}$ for any $r\in(0,\infty)$.
\end{theorem}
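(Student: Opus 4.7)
The plan for (i) is to compare $\tilde u$ (the lower semicontinuous representative of $u$) against the scaled function $v(x):=M_r(1-u_0(x))$ via the comparison principle. First I would pass from $u$ to $\tilde u$ via Lemma~\ref{lem-SHSL}(ii), so that $\tilde u$ is $(s,p)$-superharmonic in $\rn$. Since $u$ is not a.e.\ zero, some ball $B_0$ satisfies $\int_{B_0}u>0$; the weak Harnack inequality (Lemma~\ref{lem-Harnack}) gives $\einf_{2B_0}u>0$, and a standard chaining-of-balls argument then propagates this positivity to every compact subset of $\rn$. Combining this with the equality $\inf_D\tilde u=\einf_D u$ on bounded open sets from Lemma~\ref{lem-SHSL}(ii), one concludes $M_r:=\inf_{\overline{\mathbb B}_r\setminus\mathbb B_1}\tilde u>0$ for every $r>1$, so any $r_0>1$ is admissible.

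Second, I would observe that $v$ is $(s,p)$-harmonic on $\rn\setminus\{0\}$. This follows from Proposition~\ref{prop-harmonic}, the oddness identity $(-\Delta)^s_p(c-f)=-(-\Delta)^s_p f$ (a consequence of the antisymmetry of $t\mapsto|t|^{p-2}t$), and the $(p-1)$-homogeneity of $(-\Delta)^s_p$. Then I would apply the comparison principle (Lemma~\ref{lem-KKP-CP-general}) on $\Omega:=\rn\setminus\overline{\mathbb B}_r$, with $\tilde u$ as the superharmonic candidate and $v$ as the subharmonic one. The exterior condition $\tilde u\ge v$ a.e.\ on $\overline{\mathbb B}_r$ splits into two cases: inside $\overline{\mathbb B}_1$ one has $|x|^{(sp-n)/(p-1)}\ge 1$, so $v\le 0\le\tilde u$; on $\overline{\mathbb B}_r\setminus\mathbb B_1$ we get $\tilde u\ge M_r\ge v$ from $v\le M_r$ everywhere. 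The boundary comparison on $\{|x|=r\}$ is the strict inequality $\liminf\tilde u\ge\tilde u(x)\ge M_r>M_r(1-r^{(sp-n)/(p-1)})=v(x)=\limsup v$, using lower semicontinuity of $\tilde u$ and continuity of $v$, with both sides finite. The resulting pointwise bound $\tilde u\ge v$ a.e.\ on $\rn$ is exactly statement~(i).

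Part~(ii) is then an immediate consequence of (i): since $sp<n$ forces $u_0(x)\to 0$ as $|x|\to\infty$, taking $R_0:=\max\{r_0+1,\,((1+M_{r_0})/M_{r_0})^{(p-1)/(n-sp)}\}$ ensures that $u_0(x)\le M_{r_0}/(1+M_{r_0})$ for $|x|\ge R_0$, whence (i) yields $u(x)\ge M_{r_0}(1-u_0(x))\ge u_0(x)=|x|^{(sp-n)/(p-1)}$.

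The main technical concern is that the comparison principle is applied on the unbounded domain $\Omega=\rn\setminus\overline{\mathbb B}_r$. If Lemma~\ref{lem-KKP-CP-general} holds as stated (without a boundedness hypothesis), the argument above goes through cleanly; one is only using that $v$ is bounded by $M_r$ on $\Omega$ and that $\tilde u$ is nonnegative. If the underlying result of \cite{KorvenpaaKuusiPalatucci2017MathAnn} does require boundedness, then the natural workaround is to localize on bounded annuli $\mathbb B_R\setminus\overline{\mathbb B}_r$, construct via Proposition~\ref{prop-DP} a continuous solution $v_R$ of a Dirichlet problem whose exterior data match $v$ on $\overline{\mathbb B}_r$ and a truncated version of $v$ on $\rn\setminus\mathbb B_R$, apply Lemma~\ref{lem-KKP-CP2} to obtain $\tilde u\ge v_R$ there, and pass $R\to\infty$ exploiting the decay of $u_0$ at infinity to recover the desired bound.
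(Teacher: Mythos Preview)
Your argument for part~(i) is essentially the paper's: pass to the lower semicontinuous representative, establish positivity of the annular infimum $M_r$ via the weak Harnack inequality, and then apply the general comparison principle (Lemma~\ref{lem-KKP-CP-general}) against the barrier $v=M_r(1-u_0)$ on the unbounded exterior $\Omega=\rn\setminus\overline{\mathbb B}_r$. The paper invokes the same lemma on the same unbounded domain, so your concern about boundedness is legitimate but no more of a gap than in the paper itself; your proposed workaround via Proposition~\ref{prop-DP} is in fact the mechanism the paper uses elsewhere (see below). A minor difference: you obtain $M_r>0$ for \emph{all} $r>1$ by Harnack chaining, whereas the paper argues by contradiction that $M_r>0$ for $r\ge r_0$ (if $M_r=0$ for some $r$, Harnack forces $u\equiv0$ on $\mathbb B_r$); either suffices.

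For part~(ii) you take a genuinely shorter route than the paper. The paper does \emph{not} derive~(ii) directly from~(i). Instead it first extracts from~(i) the uniform lower bound $u\ge c_u:=M_{r_0}(1-2^{(sp-n)/(p-1)})$ on $\rn\setminus\mathbb B_2$, then for each small $\epsilon>0$ solves (via Proposition~\ref{prop-DP}) a Dirichlet problem on $\mathbb B_{R_0}$ with exterior datum the smooth truncation $\Phi_\epsilon(x)=|x|^{(sp-n)/(p-1)}\chi_\epsilon(x)$, bounds the resulting continuous solution $w$ above by $c_u$ using Lemma~\ref{lem-KKP-CP-general}, compares $u\ge w$ via Lemma~\ref{lem-KKP-CP2} on the bounded set $\mathbb B_{R_0}$, and finally lets $\epsilon\to0$. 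Your observation that~(ii) follows immediately from~(i) via the elementary equivalence $M_{r_0}(1-u_0(x))\ge u_0(x)\Leftrightarrow u_0(x)\le M_{r_0}/(1+M_{r_0})$ is correct and bypasses Proposition~\ref{prop-DP} and Lemma~\ref{lem-KKP-CP2} entirely. The paper's longer route has the merit of showing explicitly how to reduce to a comparison on a \emph{bounded} set (this is essentially the localization you sketch at the end), but for the statement as written your one-line derivation is cleaner.
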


\begin{proof}
  According to Lemma \ref{lem-SHSL}(ii) (see also \cite[Theorem~9]{KorvenpaaKuusiPalatucci2017MathAnn}), the weak $(s,p)$-supersolution $u$ has a lower semicontinuous representative $\tilde u$ in $\rn$, that is,
  $$
  u(x)=\tilde u(x)=\liminf_{\gfz{y\in\rn}{y\to x}} u(y) \quad \text{for a.e.}\  x\in\rn.
  $$
 For simplicity, we may as well assume that $u$ itself is lower semicontinuous on $\rn$. Then, by Lemma \ref{lem-SHSL}(ii), we know that $u$ is  $(s,p)$-superharmonic  in $\rn$ and, for any $r\in (0,\infty)$,
\begin{align}\label{eq-inf}
\einf_{\mathbb B_r} u=\inf_{\mathbb B_r} u.
\end{align}
  With this,  we are about to validate (i) and (ii) in turn.

 \medskip

{\bf  Step 1:\, proof of (i).\,}
 Let us first show that there exists some $r_0\in(1,\infty)$ satisfying
\begin{align}\label{eq-ur0}
\inf_{\overline{\mathbb B}_{r_0}\setminus {\mathbb B}_1} u\neq 0.
\end{align}
If, for all $r\in(1,\infty)$, we have
  $$f(r):=\inf_{\overline{\mathbb B}_{r}\setminus {\mathbb B}_1} u= 0,$$
then  by using \eqref{eq-WH} and \eqref{eq-inf} we obtain that for some $t$ satisfying \eqref{eq-bar-t}, it holds that
  \begin{align}\label{eq-WH-x1}
\left(\fint_{{\mathbb B}_r} u(x)^t\, dx\right)^\frac 1t
\le C \einf_{{\mathbb B}_{2r}}u
= C \inf_{{\mathbb B}_{2r}}u  \le  C\inf_{\overline{\mathbb B}_r\setminus {\mathbb B}_1} u =0,
\end{align}
which implies that $u=0$  almost everywhere on ${\mathbb B}_r$ and, hence, $u=0$ almost everywhere on $\rn$ via letting $r\to \infty$. This contradicts to the assumption that $u$ is  not  almost everywhere $0$ on $\rn$. Thus, there exists some $r_0\in(1,\infty)$ such that \eqref{eq-ur0} holds.
The previous argument, along with  \eqref{eq-ur0}, further implies that for all $r\ge r_0$,
\begin{align*}
f(r)=\inf_{\overline{\mathbb B}_{r}\setminus {\mathbb B}_1} u\neq 0.
\end{align*}
Indeed, if $f(r)=0$ for some $r>r_0$, then we have by \eqref{eq-WH-x1} that $u=0$ almost everywhere on the open ball ${\mathbb B}_r$ and, hence, $f(r_0)=0$, which is a contradiction!

For such $r_0$ and $r\in(r_0, \infty)$, it is obvious that
$$
u(x)\ge \inf_{\overline{\mathbb B}_{r}\setminus B_1} u\qquad \text{for all}\ \ x\in\overline{\mathbb B}_{r}\setminus {\mathbb B}_1,
$$
which implies
\begin{align}\label{eq1-u>v}
u(x)\ge \left(\inf_{\overline{\mathbb B}_{r}\setminus {\mathbb B}_1} u\right) \left(1-|x|^{\frac{sp-n}{p-1}}\right)
\qquad \text{for all}\ \ x\in\overline{\mathbb B}_{r}
\end{align}
since $u\ge 0$ and the right hand side of \eqref{eq1-u>v} is negative when $x\in {\mathbb B}_1$.
Denote by $v$ the function in the right hand side of \eqref{eq1-u>v}, which is
$(s,p)$-harmonic in $\rn\setminus\{0\}$ by terms of Proposition \ref{prop-harmonic}.
Since $u$ is lower continuous on $\rn$ and $v$ is continuous on $\rn\setminus\{0\}$, it follows that inequality \eqref{eq1-CP} now reads off
\begin{align*}
\liminf_{\gfz{y\in\Omega}{y\to x}} u(y)\ge \liminf_{\gfz{y\in\rn}{y\to x}} u(y) =u(x)\ge v(x)=\limsup_{\gfz{y\in\Omega}{y\to x}} v(y) \quad \text{for all}\  x\in\partial {\mathbb B}_{r}=\partial\Omega,
\end{align*}
where $\Omega:=\rn\setminus {\overline{\mathbb B}_{r}}$.
In view of the above arguments, applying Lemma \ref{lem-KKP-CP-general} yields
\begin{align}\label{eq11-u>v}
u(x)\ge v(x)=\left(\inf_{\overline{\mathbb B}_{r}\setminus {\mathbb B}_1} u\right) \left(1-|x|^{\frac{sp-n}{p-1}}\right)\qquad \text{for a.e.}\ \ x\in\rn.
\end{align}
Thus, we obtain (i).

\medskip

{\bf  Step 2:\, Proof of (ii).}
Via taking $r=r_0$ in \eqref{eq11-u>v} and setting
$$
c_u:= \left(\inf_{\overline{\mathbb B}_{r_0}\setminus {\mathbb B}_1} u\right) \left(1-2^{\frac{sp-n}{p-1}}\right),
$$
which is a positive number,
we then deduce from \eqref{eq11-u>v} that
\begin{align}\label{eq12-u>v}
u \ge c_u\quad \text{a.e. on}\ \ \rn\setminus\mathbb B_2.
\end{align}

Since $r_0>1$ and $sp<n$, there exists a large number $R_0\in (\max\{4,\,r_0\},\,\infty)$ such that
$$R_0^{\frac{sp-n}{p-1}}\le c_u.$$
Suppose that $\epsilon\in(0,\,(4R_0)^{-1})$.
Choose a radial  function $\chi_\epsilon\in C_c^\infty(\rn)$ such that
 $0\le\chi_\epsilon\le 1$ and
$$
\chi_\epsilon(x)=\begin{cases}
1\quad\text{as} \ \ \epsilon< |x|< \epsilon^{-1};\\
0 \quad \text{as} \ \ |x|\le \epsilon/2 \ \text{or }\  |x|\ge 2\epsilon^{-1}.
\end{cases}
$$
For any $x\in\rn$, define
\begin{align*}
\Phi_\epsilon(x):= |x|^{\frac{sp-n}{p-1}} \chi_\epsilon(x),
\end{align*}
which belongs to $C_c^\infty(\rn)$ and, hence, to $W^{s,p}(\rn).$ Note that for each ball $B$ in $\rn$, the  set $\rn\setminus B$  satisfies the measure density condition \eqref{eq-mdc}.
With these facts, we apply Proposition \ref{prop-DP}
and obtain that the  Dirichlet problem
\begin{align}\label{eq-DP}
\begin{cases}
(-\Delta)^s_p w=0 &\quad\text{weakly in}\ {\mathbb B}_{R_0};\\
w=\Phi_\epsilon  &\quad\text{pointwisely on}\ \rn\setminus {\mathbb B}_{R_0},
\end{cases}
\end{align}
has a continuous solution $w\in W^{s,p}(\rn)$.  From
Lemma \ref{lem-SHSL}(iii), it follows that $w$ is $(s,p)$-harmonic  in ${\mathbb B}_{R_0}$.
If $x\in \rn\setminus {\mathbb B}_{R_0}$, then by the second formula of   \eqref{eq-DP}, the definition of $\Phi_\epsilon$ and $0\le \chi_\epsilon\le 1$, one has
$$
w(x)=\Phi_\epsilon(x)= |x|^{\frac{sp-n}{p-1}} \chi_\epsilon(x)\le |x|^{\frac{sp-n}{p-1}} \le R_0^{\frac{sp-n}{p-1}}\le c_u.
$$
In particular,  the continuity of $w$ on $\rn$ ensures
\begin{align*}
\limsup_{\gfz{y\in {\mathbb B}_{R_0}}{y\to x}} w(y)
= \lim_{\gfz{y\in {\mathbb B}_{R_0}}{y\to x}} w(y)=w(x) \le c_u\quad \text{for all}\ x\in \partial {\mathbb B}_{R_0}.
\end{align*}
Invoking the last two formulae, together with the fact that the constant function $c_u$ is $(s,p)$-harmonic in ${\mathbb B}_{R_0}$,
we then use Lemma  \ref{lem-KKP-CP-general} to derive that
\begin{align}\label{eq13-u>v}
w\le c_u\quad \text{a.e. on } \ \rn.
\end{align}

Now, an combination of \eqref{eq12-u>v} and \eqref{eq13-u>v} gives
\begin{align*}
u\ge w\quad \text{a.e. on} \ \ \rn\setminus {\mathbb B}_{2}.
\end{align*}
Meanwhile, note that $0\le u\in W_\loc^{s,p}(\rn), w\in W^{s,p}(\rn)$ and, by the first formula of   \eqref{eq-DP}, we have
\begin{align*}
\langle (-\Delta)^s_p w,\,\phi\rangle = 0 \le \langle (-\Delta)^s_p u,\,\phi\rangle\quad \text{for all}\ 0\le\phi\in C_c^\infty({\mathbb B}_{R_0}).
\end{align*}
Then, applying Lemma \ref{lem-KKP-CP2}  derives
\begin{align}\label{eq-uw}
u\ge w\quad \text{a.e. on} \ \rn.
\end{align}

In particular, by \eqref{eq-uw} and the second formula of  \eqref{eq-DP},
we see that for almost all $x$ satisfying $R_0\le |x|\le \epsilon^{-1}$,
$$
u(x)\ge w(x)=\Phi_\epsilon(x)= |x|^{\frac{sp-n}{p-1}} \chi_\epsilon(x) = |x|^{\frac{sp-n}{p-1}}.
$$
Via letting $\epsilon\to 0$, we observe that this last estimate remains to be  true for almost all $|x|\ge R_0$,
which implies the desired result of (ii).
\end{proof}

The proof of Theorem \ref{thm-mr} implies the following byproduct, which might has its own interest.

\begin{corollary}\label{cor-mr}
Under the same hypothesis of Theorem \ref{thm-mr}, for any $r\in (0,\infty)$, define the function
\begin{align*}
m(r):=\mathop{\mathrm{essinf}}_{{\mathbb B}_{2r}\setminus \mathbb B_r} u.
\end{align*}
Then,  there exist constants  $R_0\in(0,\infty)$ and $C\in (0,\infty)$, depending on $u$, such that  for all $r\ge R_0$,
$$(2r)^{\frac{sp-n}{p-1}}\le m(r)\le C.$$
\end{corollary}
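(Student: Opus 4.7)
Proof plan.

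Both inequalities are read off from Theorem~\ref{thm-mr}. The lower bound is immediate from part~(ii): with $R_0$ chosen to be the constant furnished by Theorem~\ref{thm-mr}(ii), the annulus $\mathbb{B}_{2r}\setminus\mathbb{B}_r$ lies in $\{|x|\ge R_0\}$ for every $r\ge R_0$, so $u(x)\ge |x|^{(sp-n)/(p-1)}$ almost everywhere on that annulus. Since $sp<n$ and $p>1$ make the exponent $(sp-n)/(p-1)$ strictly negative, the power function $t\mapsto t^{(sp-n)/(p-1)}$ is decreasing, and for $|x|\le 2r$ we have $|x|^{(sp-n)/(p-1)}\ge (2r)^{(sp-n)/(p-1)}$. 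Combining these two observations and passing to essential infima yields $m(r)\ge (2r)^{(sp-n)/(p-1)}$.

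For the upper bound, the plan is to exploit Theorem~\ref{thm-mr}(i) at a single reference point. Let $\tilde u$ denote the lower semicontinuous representative of $u$ furnished by Lemma~\ref{lem-SHSL}(ii), and pick any $x_*\in\rn$ with $|x_*|>1$ and $\tilde u(x_*)<\infty$ (such a point exists since $\tilde u$ is finite everywhere on $\rn$). Substituting $x=x_*$ into the inequality in Theorem~\ref{thm-mr}(i) gives, uniformly in $\rho\ge r_0$,
\[
I(\rho):=\inf_{\overline{\mathbb{B}}_\rho\setminus\mathbb{B}_1}u
\le \frac{\tilde u(x_*)}{1-|x_*|^{(sp-n)/(p-1)}}=:C_*.
\]
Splitting $\overline{\mathbb{B}}_{2r}\setminus\mathbb{B}_1=(\overline{\mathbb{B}}_r\setminus\mathbb{B}_1)\cup(\overline{\mathbb{B}}_{2r}\setminus\mathbb{B}_r)$ and identifying pointwise infimum with essential infimum via Lemma~\ref{lem-SHSL}(ii) produces the identity $I(2r)=\min(I(r),m(r))$, whence $\min(I(r),m(r))\le C_*$. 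To single out $m(r)$, this will be combined with the converse comparison $m(r)\ge I(r)\,\bigl(1-r^{(sp-n)/(p-1)}\bigr)$, obtained by applying Theorem~\ref{thm-mr}(i) with $\rho=r$ at points $x$ of the annulus and using $|x|\ge r$ together with the negativity of the exponent. A short case analysis on whether $m(r)\le I(r)$ or $m(r)>I(r)$ then yields $m(r)\le C$ for a constant $C$ depending only on $u$ (through $\tilde u(x_*)$), after enlarging $R_0$ if necessary so that the factor $1-r^{(sp-n)/(p-1)}$ is bounded away from zero.

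The main obstacle is the case $m(r)>I(r)$: the identity $I(2r)=\min(I(r),m(r))$ then only produces the bound $I(r)\le C_*$, not a bound on $m(r)$ itself. The way around is to use the quantitative near-equality between $I(r)$ and $m(r)$ encoded in the converse inequality above, whose multiplicative defect $1-r^{(sp-n)/(p-1)}$ tends to $1$ as $r\to\infty$; this lets one transfer the bound on $I$ to $m$. Care must also be taken to verify that the resulting constant $C$ depends on $u$ only through the single value $\tilde u(x_*)$ and not on $r$.
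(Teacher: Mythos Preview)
Your lower bound is fine and matches the paper. The first half of your upper-bound argument is also on track and is essentially what the paper does: you pick a reference point $x_*$ with $|x_*|>1$ and read off from Theorem~\ref{thm-mr}(i) the uniform bound
\[
I(\rho)=\inf_{\overline{\mathbb B}_\rho\setminus\mathbb B_1} u \le \frac{\tilde u(x_*)}{1-|x_*|^{(sp-n)/(p-1)}}=:C_*\qquad (\rho\ge r_0).
\]
(The paper takes the essential infimum over $\mathbb B_4\setminus\mathbb B_2$ instead of evaluating at a single point, but the idea is the same.)

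The gap is in passing from $I(r)\le C_*$ to $m(r)\le C$. Your ``converse comparison''
\[
m(r)\ \ge\ I(r)\bigl(1-r^{(sp-n)/(p-1)}\bigr)
\]
is a \emph{lower} bound on $m(r)$, not an upper bound; no matter how close the factor is to $1$, it only says $m(r)$ is at least roughly $I(r)$, and in your Case~2 you already know $m(r)>I(r)$ anyway. Nothing in Theorem~\ref{thm-mr}(i) bounds $u$ (hence $m(r)$) from above, so from these ingredients alone $m(r)$ could in principle be arbitrarily large while $I(r)$ stays bounded. The ``near-equality'' you invoke does not exist in the direction you need.

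What closes the gap in the paper is the weak Harnack inequality (Lemma~\ref{lem-Harnack}): for admissible $t$,
\[
(1-2^{-n})^{1/t}\,m(r)\ \le\ \Bigl(\fint_{\mathbb B_{2r}} u^t\Bigr)^{1/t}\ \le\ C\,\einf_{\mathbb B_{4r}} u\ \le\ C\,I(r)\ \le\ C\,C_*,
\]
using $\overline{\mathbb B}_r\setminus\mathbb B_1\subset\mathbb B_{4r}$ for the penultimate inequality. Insert this Harnack step in place of your case analysis and the proof is complete.
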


\begin{proof}
Let $R_0$ be the constant determined in the proof of Theorem \ref{thm-mr}.
As a consequence of Theorem \ref{thm-mr}(ii), we have for all $r\ge R_0$ that
$$
m(r)=\einf_{{\mathbb B}_{2r}\setminus {\mathbb B}_r} u \ge  \einf_{{\mathbb B}_{2r}\setminus {\mathbb B}_r} |x|^{\frac{sp-n}{p-1}}
= (2r)^{\frac{sp-n}{p-1}},
$$
which gives the lower estimate of $m$.

The upper estimate of $m$ is a consequence of \eqref{eq11-u>v}. Indeed, if $r\ge R_0$, then
\begin{align*}
  \einf_{{\mathbb B}_4\setminus {\mathbb B}_2} u \ge \einf_{x\in{\mathbb B}_4\setminus {\mathbb B}_2}
  \left(\left(\inf_{\overline{\mathbb B}_{r}\setminus {\mathbb B}_1} u\right)\left(1-|x|^{\frac{sp-n}{p-1}}\right)\right)=\left(\inf_{\overline{\mathbb B}_{r}\setminus {\mathbb B}_1} u\right)\left(1-2^{\frac{sp-n}{p-1}}\right),
\end{align*}
From this, \eqref{eq-inf} and the Harnack inequality in Lemma \ref{lem-Harnack}, it follows that
\begin{align*}
  \left(\einf_{{\mathbb B}_4\setminus {\mathbb B}_2} u\right)
  \left(1-2^{\frac{sp-n}{p-1}}\right)^{-1}
\ge \inf_{\overline{\mathbb B}_{r}\setminus {\mathbb B}_1} u
  &\ge \inf_{{\mathbb B}_{4r}} u= \einf_{{\mathbb B}_{4r}} u
  \end{align*}
  and
  \begin{align*}
  C\einf_{{\mathbb B}_{4r}} u &\ge \left(\fint_{{\mathbb B}_{2r}} u(x)^t\,dx\right)^{\frac 1t}\\
  &\ge  \left(\frac1{|{\mathbb B}_{2r}|}\int_{{\mathbb B}_{2r}\setminus {\mathbb B}_{r}} u(x)^t\,dx\right)^{\frac 1t}\\
  &\ge \left(\einf_{{\mathbb B}_{2r}\setminus {\mathbb B}_{r}} u\right)
  \left(\frac{|{\mathbb B}_{2r}\setminus {\mathbb B}_{r}|}{|{\mathbb B}_{2r}|}\right)^\frac 1t\\
  &= m(r)(1-2^{-n})^{\frac 1t},
\end{align*}
where $t$ and $C$ are the same constants appearing in Lemma \ref{lem-Harnack}.
Combining  the last two formulae indicates that $m(r)$ is bounded by a uniform constant whenever $r\ge R_0$.
\end{proof}

\subsection{Proof of Theorem \ref{thm-Liouville}(i)-(ii)}\label{ss3.4}

\begin{proof}[Proof of Theorem \ref{thm-Liouville} (i) and (ii)]
Let $\eta\in C_c^\infty(\rn)$ such that
$\supp\eta\subseteq \mathbb B_2$, $\eta\equiv 1$ on $\mathbb B_1$ and $0\le\eta\le 1$.
For any $x\in\rn$ and $r\in(0,\infty)$, define
$$
\phi_r(x):=\eta\left(\frac{x}{ r}\right).
$$
Every $\phi_r$ is nonnegative and belongs to $C_c^\infty(\rn)$. Moreover, $\supp\phi_r\subseteq \mathbb B_{2r}$ and $\phi_r\equiv 1$ on $\mathbb B_r$.

Let $s\in(0,1)$, $p\in(0,1)$ and $q\in(0,\infty)$ such that
$0\le u\in W_\loc^{s,p}(\rn)$ satisfies  \eqref{eq-p-Lapineq}, so that we can apply \eqref{eq-phi>} with test functions $\phi_r$, which gives
\begin{align}\label{eq-add}
\int_{\mathbb B_r} u(x)^q\, dx
&\le\int_\rn u(x)^q\phi_r(x)\, dx\\
&\le \frac 12\int_{\rn}\int_{\rn}\frac{|u(x)-u(y)|^{p-2}(u(x)-u(y))(\phi_r(x)-\phi_r(y))}{|x-y|^{n+sp}}\,dy\,dx \notag\\
&\le  \frac 12 \int_{\rn}\int_{\rn}\frac{|u(x)-u(y)|^{p-1}|\phi_r(x)-\phi_r(y)|}{|x-y|^{n+sp}}\,dy\,dx. \notag
\end{align}
A further combination of  \eqref{eq-mr110} and  \eqref{eq-add} yields
\begin{align}\label{eq-add1}
\int_{\mathbb B_r} u(x)^q\, dx
\ls
r^{n-sp}\left(\einf_{\mathbb B_{2r}} u\right)^{p-1},
\end{align}
with implicit constant independent of $r$. Below we show (i) and (ii) of Theorem \ref{thm-Liouville} by considering the following two cases: $p-1\ge q$ and $p-1< q$.

\medskip

{\bf Case 1:\, $p-1\ge q$.\,} In this case, it follows from \eqref{eq-add1} that
$$
\left(\einf_{\mathbb B_{r}} u\right)^q
\ls \fint_{\mathbb B_r} u(x)^q\, dx
\ls
r^{-sp}\left(\einf_{\mathbb B_{2r}} u\right)^{p-1}
\ls r^{-sp}\left(\einf_{\mathbb B_{r}} u\right)^{p-1}.
$$
If $\einf_{\mathbb B_{r}} u\neq 0$ for some $r\in(1,\infty)$, then
$$
r^{sp}\ls \left(\einf_{\mathbb B_{r}} u\right)^{p-1-q}\ls \left(\einf_{\mathbb B_1} u\right)^{p-1-q}.
$$
Consequently, if there exists an increasing sequence $\{r_j\}_{j\in\nn}$ tends to $\infty$ such that $\einf_{\mathbb B_{r_j}} u\neq 0$ for all $j\in\nn$, then
$$
\infty=\lim_{j\to\infty}r_j^{sp}\ls  \left(\einf_{\mathbb B_1} u\right)^{p-1-q},
$$
which contradicts to the assumption of $u\in W_\loc^{s,p}(\rn)$ (that ensures the locality integrability of $u$).
This illustrates that
$$\sup\left\{r\in(0,\infty):\ \einf_{\mathbb B_{r}} u \neq 0\right\} <\infty.$$
In other words, for some large $R$, it holds that
$$\einf_{\mathbb B_R} u= 0$$
This latter fact, along with  \eqref{eq-add1}, further derives that for any $r\in (R,\infty)$,
$$
\int_{\mathbb B_r} u(x)^q\, dx
\ls
r^{n-sp}\left(\einf_{\mathbb B_{2r}} u\right)^{p-1}
\ls
r^{n-sp}\left(\einf_{\mathbb B_{R}} u\right)^{p-1}=0.
$$
Thus, we obtain that $u=0$ almost everywhere on $\mathbb B_r$ and, hence, on $\rn$ by letting $r\to\infty$.

\medskip

{\bf Case 2:\, $p-1< q$.\,}
Again, using \eqref{eq-add1} yields
$$
\int_{\mathbb B_r} u(x)^q\, dx
\ls
r^{n-sp}\left(\einf_{\mathbb B_{2r}} u\right)^{p-1}
\ls r^{n-sp}\left(\einf_{\mathbb B_{r}} u\right)^{p-1}
\ls r^{n-sp}\left(\fint_{\mathbb B_r} u(x)^q\, dx\right)^{\frac{p-1} q},
$$
that is,
\begin{align}\label{eq-new0}
\left(\int_{\mathbb B_r} u(x)^q\, dx\right)^{1-\frac{p-1} q}
\ls  r^{n-sp-\frac{n(p-1)}{q}}.
\end{align}
Now, we let $r\to\infty$ in both sides of \eqref{eq-new0}.
If,  either $sp\ge n$ or $sp<n$ and $q<\frac{n(p-1)}{n-sp}$, then
$$
\lim_{r\to\infty} r^{n-sp-\frac{n(p-1)}{q}}=0,
$$
which induces
$$
\left(\int_\rn u(x)^q\,dx\right)^{1-\frac{p-1} q}=0,
$$
thereby leading to that $u=0$ almost everywhere on $\rn$.

Consider now the case $sp<n$ and $q=\frac{n(p-1)}{n-sp}$, in which case the inequality \eqref{eq-new0} reads off
$$
\left(\int_{\mathbb B_r} u(x)^q\, dx\right)^{1-\frac{p-1} q}
\ls  r^{n-sp-\frac{n(p-1)}{q}}\simeq 1.
$$
Letting $r\to\infty$ gives $u\in L^q(\rn)$.
With this, we will show that $u=0$ almost everywhere on $\rn$ by using
the method of reduction to absurdity. To this end, we assume on the contrary that $u$ is not  almost everywhere zero on $\rn$, so that Theorem \ref{thm-mr} can be applied to derive that
\begin{align}\label{eq-mru-UL}
u(x)\ge |x|^{\frac{sp-n}{p-1}}\quad \text{for all}\ \ |x|\ge R_0,
\end{align}
where  $R_0\in(0,\infty)$ is a large constant depending on $u$.  Since now $q=\frac{n(p-1)}{n-sp}$, the lower estimate of $u$ in \eqref{eq-mru-UL} implies
\begin{align*}
  \int_{|x|\ge R_0} u(x)^q\,dx \ge  \int_{|x|\ge R_0} |x|^{\frac{q(sp-n)}{p-1}} \,dx= \int_{|x|\ge R_0} |x|^{-n}\,dx=\infty.
\end{align*}
This contradicts to the aforementioned conclusion $u\in L^q(\rn)$. Thus, we obtain that $u=0$ almost everywhere on $\rn$ when $sp<n$ and $q=\frac{n(p-1)}{n-sp}$.

Summarizing all the above arguments, we conclude the proofs of (i) and (ii) of Theorem \ref{thm-Liouville}.
\end{proof}

\section{Existence of positive solutions to the fractional $p$-Laplace inequality}\label{sec5}

The aim of this section is to show Theorem \ref{thm-Liouville}(iii).
In other words, under the case $sp<n$ and $q>\frac{n(p-1)}{n-sp}$, we are required to construct a positive solution  $u\in W_\loc^{s,p}(\rn)$ such that
		\begin{equation}\label{eq-p-Laplace=}
		(-\Delta)^s_pu\ge u^q \qquad \text{pointwisely on}\ \ \rn.
		\end{equation}
The idea is to consider the smooth perturbation of the fundamental solution in \eqref{eq-U}. To this end,
for any $x\in\rn$, we define
\begin{align}\label{eq-uf}
f(x):=(1+|x|^2)^\frac{sp-n+\sigma}{2(p-1)},
\end{align}
where $\sigma>0$ is a small value satisfying
\begin{align}\label{eq-sigma}
\begin{cases}
\sigma(p-1)+q(n-sp-\sigma)>n(p-1);\\
2\sigma+sp<n.
\end{cases}
\end{align}
It is easy to check that $f\in W_\loc^{s,p}(\rn)$.
The approach is to show that the smooth function $f$ satisfies \eqref{eq-p-Laplace=} at infinity, thereby getting that the desired solution $u$ will be a positive constant multiple of  $f$.

\subsection{Continuity and positivity of $(-\Delta)^s_p f$}\label{ss5.2}

Throughout this section, we let $f$ be the function defined as in \eqref{eq-uf}, where $\sigma$ is  a small positive number satisfying \eqref{eq-sigma}. The main aim of this section is to show that $(-\Delta)^s_p f$ is a continuous positive function on $\rn$.
Since  $f$ is a radial function, with a little abuse of notation, we will occasionally write $f(x)$ in \eqref{eq-uf} as $f(r)$ whenever  $|x|=r$.

\begin{lemma}
 Let  $s\in(0,1)$ and $p\in (1,\infty)$ such that $sp<n$.
  Assume that $f$ is defined as in \eqref{eq-uf}, where $\sigma$ is  a positive number satisfying \eqref{eq-sigma}.
Then, for any $t\in(0,1)$ and $r\in(0,\infty)$,
\begin{align}\label{eq-f1}
f(rt)>f(r)>f(rt^{-1})
\end{align}
and
\begin{align}\label{eq-f2}
  t^{sp-n}[f(r)-f(rt^{-1})]^{p-1}>[f(rt)-f(r)]^{p-1}.
\end{align}
\end{lemma}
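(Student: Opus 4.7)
My plan is to handle \eqref{eq-f1} by bare monotonicity and to reduce \eqref{eq-f2} to a clean one-variable strict-convexity lemma. Set $\alpha := \frac{sp-n+\sigma}{2(p-1)}$. The second condition of \eqref{eq-sigma} gives $2\sigma<n-sp$, hence $\alpha<0$, so $f(r)=(1+r^2)^\alpha$ is strictly decreasing on $(0,\infty)$. For $t\in(0,1)$, the ordering $rt<r<rt^{-1}$ yields \eqref{eq-f1} at once.

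For \eqref{eq-f2}, both differences are positive by \eqref{eq-f1} and $p-1>0$, so taking $(p-1)$-th roots reduces the target to
\[
t^{\beta}\bigl[f(r)-f(rt^{-1})\bigr]>f(rt)-f(r),\qquad \beta:=\tfrac{sp-n}{p-1}<0.
\]
Writing $g(x):=(1+x)^\alpha$ (so that $f(r)=g(r^2)$) and introducing $w:=r^2>0$, $v:=t^2\in(0,1)$, $\mu:=\beta/2=\alpha-\tfrac{\sigma}{2(p-1)}$, together with the identity $t^\beta=v^\mu$, the inequality becomes
\[
v^\mu\bigl[g(w)-g(w/v)\bigr]>g(wv)-g(w),
\]
i.e.\ the positive ratio $R(w):=\dfrac{g(wv)-g(w)}{g(w)-g(w/v)}$ satisfies $R(w)<v^\mu$.

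The crucial step is to prove the stronger bound $R(w)<v^\alpha$ for every $w>0$. Using $v^\alpha(1+w/v)^\alpha=(v+w)^\alpha$, this rearranges to
\[
(1+wv)^\alpha+(v+w)^\alpha<(1+v^\alpha)(1+w)^\alpha.
\]
Dividing by $(1+w)^\alpha$ and setting $x:=\tfrac{1-v}{1+w}\in(0,1-v)$, one checks $(1+wv)/(1+w)=v+x$ and $(v+w)/(1+w)=1-x$, so the inequality reduces to $\Psi(x)<1+v^\alpha$ on $(0,1-v)$ for
\[
\Psi(x):=(v+x)^\alpha+(1-x)^\alpha.
\]
A direct computation gives $\Psi(0)=\Psi(1-v)=1+v^\alpha$ and
\[
\Psi''(x)=\alpha(\alpha-1)\bigl[(v+x)^{\alpha-2}+(1-x)^{\alpha-2}\bigr]>0,
\]
since $\alpha<0$ forces $\alpha(\alpha-1)>0$. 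Strict convexity of $\Psi$ combined with equal endpoint values forces $\Psi<1+v^\alpha$ throughout the open interval, giving $R(w)<v^\alpha$. Finally, $\alpha-\mu=\sigma/(2(p-1))>0$ and $v\in(0,1)$ give $v^\alpha<v^\mu$, so $R(w)<v^\alpha<v^\mu$ and \eqref{eq-f2} follows.

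The main obstacle is conceptual rather than computational: for the pure power $u_0(r)=r^{(sp-n)/(p-1)}$, the analog of \eqref{eq-f2} holds with \emph{equality}, so the proof must use the perturbation $\sigma>0$ (which is exactly what separates $\alpha$ from $\mu$) in an essential way. Once one isolates this gap and recognizes the strict-convexity-with-equal-endpoints pattern hidden in $\Psi$, the remaining manipulations are routine.
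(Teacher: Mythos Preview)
Your proof is correct. For \eqref{eq-f1} you argue exactly as the paper does (strict monotonicity of $r\mapsto(1+r^2)^\alpha$ with $\alpha<0$), but for \eqref{eq-f2} your route genuinely differs from the paper's. The paper sets $\lambda=1/(1+r^2)$, defines an auxiliary function $\varphi_t(\lambda)$, computes $\varphi_t'$, locates the unique critical point $\lambda_*$ explicitly, and deduces positivity from the unimodal shape together with $\varphi_t(0)>0$, $\varphi_t(1)=0$. You instead pass to the variable $x=(1-t^2)/(1+r^2)$, reduce to $\Psi(x)=(v+x)^\alpha+(1-x)^\alpha<1+v^\alpha$ on $(0,1-v)$ with $v=t^2$, and finish by a second-derivative argument: $\Psi$ is strictly convex with equal endpoint values, hence lies strictly below that common value on the interior. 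This is shorter and avoids solving for the extremum; the paper's first-derivative computation, on the other hand, makes the precise location of the turning point visible. Both arguments make essential use of $\sigma>0$: in the paper it is what forces $\varphi_t(0)>0$, while for you it is exactly the strict gap $\alpha-\mu=\sigma/(2(p-1))>0$ that upgrades $R(w)<v^\alpha$ to the required $R(w)<v^\mu$.
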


\begin{proof}
From the radially decreasing property of $f$, it follows  that \eqref{eq-f1} holds for all $t\in(0,1)$ and $r\in(0,\infty)$. Thus, in \eqref{eq-f2}, it makes sense to write $[f(r)-f(rt^{-1})]^{p-1}$ and $[f(rt)-f(r)]^{p-1}$.
Observe that \eqref{eq-f2} holds if and only if
$$t^{\frac{sp-n}{p-1}} \left(1-\frac{f(rt^{-1})}{f(r)}\right)>\frac{f(rt)}{f(r)}-1.$$
According to  \eqref{eq-uf}, we have
$$f(r)=(1+r^2)^\frac{sp-n+\sigma}{2(p-1)}.$$
In other words, the inequality \eqref{eq-f2} holds if and only if
$$
t^{\frac{sp-n}{p-1}} \left(1-\left(\frac{1+r^2t^{-2}}{1+r^2}\right)^\frac{sp-n+\sigma}{2(p-1)}\right)
>\left(\frac{1+r^2t^2}{1+r^2}\right)^\frac{sp-n+\sigma}{2(p-1)}-1.
$$
Let $\lambda:=\frac 1{1+r^2}$. Then $1-\lambda=\frac {r^2}{1+r^2}.$
In this way, the proof of \eqref{eq-f2} reduces to verifying that for every  $t\in (0,1)$, the function
\begin{align*}
\varphi_t(\lambda)
&:= t^{\frac{sp-n}{p-1}}  \left(1-\left(\lambda +(1-\lambda) t^{-2}\right)^\frac{sp-n+\sigma}{2(p-1)}\right)
- \left(\lambda+(1-\lambda) t^{2}\right)^\frac{sp-n+\sigma}{2(p-1)}+1\\
&=t^{-\frac{\sigma}{p-1}}  \left(t^\frac{sp-n+\sigma}{p-1}-\left(\lambda t^{2}+1-\lambda \right)^\frac{sp-n+\sigma}{2(p-1)}\right)
- \left(\lambda+(1-\lambda) t^{2}\right)^\frac{sp-n+\sigma}{2(p-1)}+1
\end{align*}
is strictly positive for all $\lambda\in(0,1)$.

Fix $t\in(0,1)$. A straightforward calculation shows that for all $\lambda\in(0,1)$,
\begin{align*}
  \frac{d\varphi_t(\lambda)}{d\lambda}= c_{s,p,n,\sigma}(1-t^2)
  \left(
   \left(\lambda+(1-\lambda) t^{2}\right)^{\frac{sp-n+\sigma}{2(p-1)}-1}
   -t^{-\frac{\sigma}{p-1}}   \left(\lambda t^{2}+1-\lambda \right)^{\frac{sp-n+\sigma}{2(p-1)}-1}
   \right),
\end{align*}
where
$$c_{s,p,n,\sigma}=\frac{n-sp-\sigma}{p-1}.$$
Suppose that $\lambda_\ast\in(0,1)$ is the extreme point of $\varphi_t$, that is,
\begin{align*}
&\frac{d\varphi_t(\lambda)}{d\lambda}\bigg|_{\lambda=\lambda_\ast}=0\\
 &\ \ \Leftrightarrow \ \
   \left(\lambda_\ast+(1-\lambda_\ast) t^{2}\right)^{\frac{sp-n+\sigma}{2(p-1)}-1}
  =t^{-\frac{\sigma}{p-1}}   \left(\lambda_\ast t^{2}+1-\lambda_\ast \right)^{\frac{sp-n+\sigma}{2(p-1)}-1}\\
  &\ \ \Leftrightarrow \ \
  \left(\frac{\lambda_\ast(1-t^2)+t^2}{1-\lambda_\ast(1-t^2)}\right)^{\frac{n-sp-\sigma}{2(p-1)}+1} = t^{\frac{\sigma}{p-1}}\\
   &\ \ \Leftrightarrow \  \ \lambda_\ast = \left(1+ t^\frac{2\sigma}{n-sp-\sigma+2(p-1)}\right)^{-1}
   \left(\frac{t^\frac{2\sigma}{n-sp-\sigma+2(p-1)}-t^2}{1-t^2}\right)
\end{align*}
Due to the second assumption of \eqref{eq-sigma}, we see that $\lambda_\ast\in(0,1)$. But, the function
$$\lambda\mapsto \left(\frac{\lambda(1-t^2)+t^2}{1-\lambda(1-t^2)}\right)^{\frac{n-sp-\sigma}{2(p-1)}+1}$$
is strictly increasing on $(0,1)$, which implies that
$$\lambda<\lambda_\ast\ \ \ \Leftrightarrow \ \ \
  \left(\frac{\lambda(1-t^2)+t^2}{1-\lambda(1-t^2)}\right)^{\frac{n-sp-\sigma}{2(p-1)}+1} < t^{\frac{\sigma}{p-1}} \ \ \ \Leftrightarrow \ \  \ \frac{d\varphi_t(\lambda)}{d\lambda}>0.$$
  Thus $\varphi_t$ is strictly increasing on $(0,\lambda_\ast)$ and strictly decreasing on $(\lambda_\ast, 1)$. Combining the facts $$\varphi_t(1)=0$$ and
$$\varphi_t(0)=\left(t^\frac{sp-n+\sigma}{p-1}-1\right)\left(t^{-\frac{\sigma}{p-1}}-1\right)>0,$$
we conclude that $\varphi_t(\lambda)>0$ for all $\lambda\in(0,1)$. This completes the proof of
\eqref{eq-f2}.
\end{proof}

\begin{lemma}\label{lem-Deltaspf>0}
   Let  $s\in(0,1)$ and $p\in (1,\infty)$ such that $sp<n$.
  Assume that $f$ is defined as in \eqref{eq-uf}, where $\sigma$ is  a positive number satisfying \eqref{eq-sigma}. Then, $(-\Delta)^s_p f$ is a continuous positive function on $\rn$.
\end{lemma}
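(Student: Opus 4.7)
The plan is to establish continuity via Proposition \ref{prop-ChenLi} and then pointwise positivity via a Kelvin-type inversion that reduces the matter to the already-verified inequality \eqref{eq-f2}.

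For continuity, I would observe that $f\in C^\infty(\rn)$, so the H\"older hypothesis of Proposition \ref{prop-ChenLi} is satisfied by any $\sigma_0\in(0,1]$ with $\sigma_0>1-(1-s)p$. The tail condition $f\in L_{sp}^{p-1}(\rn)$ follows from
$$
\int_\rn\frac{f(x)^{p-1}}{(1+|x|)^{n+sp}}\,dx \simeq \int_\rn(1+|x|)^{\sigma-2n}\,dx<\infty,
$$
since $\sigma<(n-sp)/2<n$. Proposition \ref{prop-ChenLi} then yields continuity of $(-\Delta)^s_pf$ at every point.

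Pointwise positivity at $x_0=0$ is immediate: by radiality $\nabla f(0)=0$, so $f(0)-f(y)=O(|y|^2)$ near the origin and the defining integrand of $(-\Delta)^s_pf(0)$ is absolutely integrable; since \eqref{eq-f1} forces $f$ to be strictly radially decreasing, the integrand is strictly positive on $\rn\setminus\{0\}$. For $x_0\ne 0$, radiality of $f$ makes $(-\Delta)^s_pf$ radial, so I may assume $x_0=r\mathbf{e}_1$ with $r>0$. The central idea is to apply the Kelvin inversion through $\partial B(0,r)$,
$$
y \mapsto y^* := \frac{r^2}{|y|^2}\,y,
$$
which fixes $x_0$, interchanges $\{|y|<r\}$ with $\{|z|>r\}$, and satisfies the identities $|y^*-x_0|=r|y-x_0|/|y|$ and $dy^*=(r/|y|)^{2n}\,dy$. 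I would split the principal-value integral defining $(-\Delta)^s_pf(x_0)$ into its outer and inner portions, apply this inversion on the inner portion to rewrite it over $\{|z|>r\}$, and combine. Writing $t:=r/|y|\in(0,1)$, this should yield
$$
(-\Delta)^s_pf(x_0)=\int_{|y|>r}\frac{[f(r)-f(|y|)]^{p-1}-t^{n-sp}[f(rt)-f(r)]^{p-1}}{|x_0-y|^{n+sp}}\,dy,
$$
after which \eqref{eq-f2} forces the bracketed integrand to be strictly positive and hence $(-\Delta)^s_pf(x_0)>0$.

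The hard part will be justifying this representation through the principal value: each of the two individual integrals $\int_{|y|>r}[f(r)-f(|y|)]^{p-1}|y-x_0|^{-n-sp}\,dy$ and its inversion counterpart may diverge near $y=x_0$, since both integrands are $\sim|y-x_0|^{p-1-n-sp}$ there, which is non-integrable once $(1-s)p\le 1$. A Taylor expansion around $|y|=r$ reveals, however, that the bracket cancels this leading-order behaviour and is of size $(|y|-r)^p$, so the combined integrand is only $O(|y-x_0|^{p(1-s)-1-n})$, which is integrable. To turn this into a rigorous argument I would truncate by $|y-x_0|>\varepsilon$ before inverting, observe that this cutoff becomes the Apollonius condition $|z-x_0|>\varepsilon|z|/r$ on the inverted side, and then bound the resulting mismatch on the thin annular region $\{\varepsilon<|y-x_0|<\varepsilon|y|/r\}$. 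This annulus has thickness $O(\varepsilon^2/r)$ at distance $\sim\varepsilon$ from $x_0$, while the integrand is $O(\varepsilon^{p-1-n-sp})$ there, so the total mismatch is $O(\varepsilon^{p(1-s)+1})\to 0$. Once this vanishing is verified, the positivity of the bracket via \eqref{eq-f2} completes the argument.
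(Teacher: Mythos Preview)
Your approach is essentially the same as the paper's: both establish continuity via Proposition~\ref{prop-ChenLi}, handle $x_0=0$ directly, and for $x_0\ne 0$ pair the contributions from $\{|y|<r\}$ and $\{|y|>r\}$ via the inversion $|y|\mapsto r^2/|y|$ (the paper does this in polar coordinates as the substitution $\rho\mapsto 1/t$ after scaling, which is exactly your Kelvin inversion) and then invoke \eqref{eq-f2}. The only minor difference is in the principal-value bookkeeping---the paper simply observes that the leftover annular term is nonnegative and keeps it as part of a lower bound (see \eqref{eq-f01}), whereas you show it vanishes; your stated rate $O(\varepsilon^{p(1-s)+1})$ should be $O(\varepsilon^{p(1-s)})$ (the annulus has volume $\sim\varepsilon^{n+1}$ and the integrand is $\sim\varepsilon^{p-1-n-sp}$), but this is still $o(1)$ so the plan goes through.
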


\begin{proof}
Since $f\in C^\infty(\rn)$, it follows from Proposition \ref{prop-ChenLi} that $(-\Delta)^s_pf$ is  continuous on $\rn$. So, it remains to show the positivity of $(-\Delta)^s_p f$ on $\rn$.

At the point $x=0$, since for all $y\in\rn\setminus\{0\}$, $$f(0)-f(y)=1-(1+|y|^2)^{\frac{sp-n+\sigma}{2(p-1)}}>0,$$ it is obvious that
\begin{align*}
  (-\Delta)^s_pf(0)
  &=\mathrm{p.v.}\int_{\rn}\frac{|f(0)-f(y)|^{p-2}(f(0)-f(y))}{|y|^{n+sp}}\,dy>0.
  \end{align*}

Consider now the case $x\in\rn$ but $x\neq 0$. Set $r=|x|$ and write $x=rA_x{\bf e}_1$, where $A_x$ is a rotation matrix in $\rn$.
Then, by changing of variables $y=rA_x z$ and $z=\rho\theta$, we write
\begin{align*}
  (-\Delta)^s_pf(x)
  &=\mathrm{p.v.}\int_{\rn}\frac{|f(x)-f(y)|^{p-2}(f(x)-f(y))}{|x-y|^{n+sp}}\,dy\\
  &=r^{-sp}\mathrm{p.v.}\int_{\rn}\frac{|f(r)-f(r|z|)|^{p-2}(f(r)-f(r|z|))}{|{\bf e}_1-z|^{n+sp}}\,dz\\
  &=r^{-sp}\lim_{\epsilon\to 0}\int_0^\infty\int_{\gfz{\theta\in\mathbb S^{n-1}}{|{\bf e}_1-\rho\theta|>\epsilon}}\frac{|f(r)-f(r\rho)|^{p-2}(f(r)-f(r\rho))}{|{\bf e}_1-\rho\theta|^{n+sp}}\,\rho^{n-1}\,d\sigma(\theta)\,d\rho
\end{align*}
So, if $r=|x|\neq 0$, then the proof of $(-\Delta)^s_pf(x)>0$ falls into verifying that
\begin{align}\label{eq-Delta-fr}
\lim_{\epsilon\to 0} \int_0^\infty\int_{\gfz{\theta\in\mathbb S^{n-1}}{|{\bf e}_1-\rho\theta|>\epsilon}}\frac{|f(r)-f(r\rho)|^{p-2}(f(r)-f(r\rho))}
{|{\bf e}_1-\rho\theta|^{n+sp}}\,\rho^{n-1}\,d\sigma(\theta)\,d\rho>0.
\end{align}

Below, let us show \eqref{eq-Delta-fr}. Fix $\epsilon\in(0,1)$. On the one hand,  by a change of variables $\rho=\frac 1t$ and observing that $|\theta|=|{\bf e}_1|=1$ implies
\begin{align}\label{eq-te1theta}
|t{\bf e}_1-\theta|^2=t^2+1-2t{\bf e}_1\cdot\theta=|{\bf e}_1-t\theta|^2,
\end{align}we then derive that
\begin{align*}
 & \int_1^\infty\int_{\gfz{\theta\in\mathbb S^{n-1}}{|{\bf e}_1-\rho\theta|>\epsilon}}\frac{|f(r)-f(r\rho)|^{p-2}(f(r)-f(r\rho))}{|{\bf e}_1-\rho\theta|^{n+sp}}\,\rho^{n-1}\,d\sigma(\theta)\,d\rho\\
 &\quad = \int_0^1 \int_{\gfz{\theta\in\mathbb S^{n-1}}{|t{\bf e}_1-\theta|>t\epsilon}}
 \frac{|f(r)-f(rt^{-1})|^{p-2}(f(r)-f(rt^{-1}))}{|{\bf e}_1-t^{-1}\theta|^{n+sp}}\,t^{-n-1}\,d\sigma(\theta)\,dt\\
 &\quad = \int_0^1 \int_{\gfz{\theta\in\mathbb S^{n-1}}{|{\bf e}_1- t\theta|>t\epsilon}}
 \frac{t^{sp-n}|f(r)-f(rt^{-1})|^{p-2}(f(r)-f(rt^{-1}))}{|{\bf e}_1-t\theta|^{n+sp}}\,t^{n-1}\,d\sigma(\theta)\,dt\\
 &\quad = \int_0^1 \int_{\gfz{\theta\in\mathbb S^{n-1}}{|{\bf e}_1- t\theta|>t\epsilon}}
 \frac{t^{sp-n}[f(r)-f(rt^{-1})]^{p-1}}{|{\bf e}_1-t\theta|^{n+sp}}\,t^{n-1}\,d\sigma(\theta)\,dt,
\end{align*}
where the last step follows from \eqref{eq-f1}.
On the other hand, from \eqref{eq-f1}, it also follows that
\begin{align*}
 &\int_0^1\int_{\gfz{\theta\in\mathbb S^{n-1}}{|{\bf e}_1-\rho\theta|>\epsilon}}\frac{|f(r)-f(r\rho)|^{p-2}(f(r)-f(r\rho))}
{|{\bf e}_1-\rho\theta|^{n+sp}}\,\rho^{n-1}\,d\sigma(\theta)\,d\rho \\
&\quad = - \int_0^1\int_{\gfz{\theta\in\mathbb S^{n-1}}{|{\bf e}_1-\rho\theta|>\epsilon}}\frac{[f(r\rho)-f(r)]^{p-1}}
{|{\bf e}_1-\rho\theta|^{n+sp}}\,\rho^{n-1}\,d\sigma(\theta)\,d\rho.
\end{align*}
 Combining the last two formulae gives
\begin{align}\label{eq-f01}
  &\int_0^\infty\int_{\gfz{\theta\in\mathbb S^{n-1}}{|{\bf e}_1-\rho\theta|>\epsilon}}\frac{|f(r)-f(r\rho)|^{p-2}(f(r)-f(r\rho))}
{|{\bf e}_1-\rho\theta|^{n+sp}}\,\rho^{n-1}\,d\sigma(\theta)\,d\rho\\
&\quad =\int_0^1 \int_{\gfz{\theta\in\mathbb S^{n-1}}{\epsilon\ge |{\bf e}_1- t\theta|>t\epsilon}}
 \frac{t^{sp-n}[f(r)-f(rt^{-1})]^{p-1}}{|{\bf e}_1-t\theta|^{n+sp}}\,t^{n-1}\,d\sigma(\theta)\,dt\notag\\
 &\qquad + \int_0^1\int_{\gfz{\theta\in\mathbb S^{n-1}}{|{\bf e}_1-t\theta|>\epsilon}}\frac{t^{sp-n}[f(r)-f(rt^{-1})]^{p-1}-[f(rt)-f(r)]^{p-1}}
{|{\bf e}_1-t\theta|^{n+sp}}\,t^{n-1}\,d\sigma(\theta)\,dt.\notag
\end{align}
Due to \eqref{eq-f1} and \eqref{eq-f2}, both the integrands in the right hand side of \eqref{eq-f01} are nonnegative. Moreover, if $\epsilon<1/2$, then the second double-integral in  the right hand side of \eqref{eq-f01} is greater than
\begin{align*}
  \int_0^{1/2}\int_{\theta\in\mathbb S^{n-1}}\frac{t^{sp-n}[f(r)-f(rt^{-1})]^{p-1}-[f(rt)-f(r)]^{p-1}}
{|{\bf e}_1-t\theta|^{n+sp}}\,t^{n-1}\,d\sigma(\theta)\,dt,
\end{align*}
which is a positive value. This proves  \eqref{eq-Delta-fr} and, hence,  $(-\Delta)^s_pf(x)>0$ for all $x\neq 0$. Thus, we complete the proof of Lemma \ref{lem-Deltaspf>0}.
\end{proof}

\subsection{An approximation lemma}\label{ss5.1}

 The main aim of
 this section is to prove the following approximation lemma.

\begin{lemma}\label{lem-approx}
  Let  $s\in(0,1)$ and $p\in (1,\infty)$ such that $sp<n$.
  Choose a small positive number $\sigma$ satisfying \eqref{eq-sigma}.
  For any $r\in(0,\infty)$ and $z\in\rn$, set
\begin{align}\label{eq-gr}
g_r(z):=\left(\frac 1{r^2}+|z|^2\right)^{\frac{sp-n+\sigma}{2(p-1)}}
\end{align}
and
\begin{align}\label{eq-Gr0}
  G(r):=\mathrm{p.v.}\int_\rn \frac{|g_r({\bf e}_1)-g_r(z)|^{p-2}(g_r({\bf e}_1)-g_r(z))}
  {|{\bf e}_1-z|^{n+sp}}\,dz.
\end{align}
Then,
\begin{align*}
  \lim_{r\to +\infty}G(r)=\mathrm{p.v.}\int_\rn \frac{|1-|z|^{\frac{sp-n+\sigma}{p-1}}|^{p-2}(1-|z|^{\frac{sp-n+\sigma}{p-1}})}
  {|{\bf e}_1-z|^{n+sp}}\,dz,
\end{align*}
where the principle value integral in the right hand side converges to a positive finite  number.
\end{lemma}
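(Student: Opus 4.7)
Set $\alpha:=\frac{sp-n+\sigma}{p-1}$, which is strictly negative by \eqref{eq-sigma}. The plan is to combine pointwise convergence ($g_r({\bf e}_1)=(1+r^{-2})^{\alpha/2}\to 1$ and $g_r(z)\to|z|^\alpha$ for each $z\ne 0$, with $g_r$ converging to $z\mapsto|z|^\alpha$ in $C^k$ on compact subsets of $\rn\setminus\{0\}$) with dominated convergence on different regions and with an odd-symmetry cancellation near the singularity at $z={\bf e}_1$. Strict positivity of the limit will then be extracted by running the scheme from the proof of Lemma \ref{lem-Deltaspf>0} with $f$ replaced by the homogeneous function $\psi(\rho):=\rho^\alpha$.

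Fix a small $\delta\in(0,1/4)$ and split $\rn=\Omega_1\cup\Omega_2\cup\Omega_3$ where $\Omega_1:=\{|z-{\bf e}_1|<\delta\}$, $\Omega_2:=\{|z|<\delta\}$, and $\Omega_3:=\rn\setminus(\Omega_1\cup\Omega_2)$. On $\Omega_2$ one has $|{\bf e}_1-z|\ge 1-\delta$, and $g_r(z)\le|z|^\alpha$ (from $\alpha<0$) gives
\begin{equation*}
|g_r({\bf e}_1)-g_r(z)|^{p-1}\le C\bigl(1+|z|^{\alpha(p-1)}\bigr)=C\bigl(1+|z|^{sp-n+\sigma}\bigr),
\end{equation*}
which is integrable on $\Omega_2$ because $sp+\sigma>0$. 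On $\Omega_3$ the same pointwise bound, together with $|{\bf e}_1-z|^{-n-sp}\sim|z|^{-n-sp}$ at infinity, furnishes an $r$-uniform integrable majorant. Dominated convergence then passes to the limit on $\Omega_2$ and $\Omega_3$ with no further work.

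The technical core is $\Omega_1$, where the principal value must be treated. With $h(t):=|t|^{p-2}t$ and the linear approximation $\ell_r(z):=-\nabla g_r({\bf e}_1)\cdot(z-{\bf e}_1)$, the oddness of $h$ together with the reflection $z\mapsto 2{\bf e}_1-z$ gives
\begin{equation*}
\mathrm{p.v.}\int_{\Omega_1}\frac{h(\ell_r(z))}{|{\bf e}_1-z|^{n+sp}}\,dz=0,
\end{equation*}
so the $\Omega_1$ part of $G(r)$ rewrites as an absolutely convergent integral of $h(g_r({\bf e}_1)-g_r(z))-h(\ell_r(z))$. The Taylor remainder satisfies $|g_r({\bf e}_1)-g_r(z)-\ell_r(z)|\le C|z-{\bf e}_1|^2$ uniformly in $r\ge 1$. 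For $p\ge 2$, \eqref{eq-ChenLi57} at once yields an $r$-uniform integrable dominating function of order $|z-{\bf e}_1|^{(1-s)p-n}$. For $1<p<2$ I would split $\Omega_1$ further according to whether $|\ell_r|\ge|g_r({\bf e}_1)-g_r(z)-\ell_r|$: on the first region \eqref{eq-ChenLi57} applies with $(|A|+|B|)^{p-2}\le C|\ell_r|^{p-2}$ and yields an integrable bound via $p>1$; on the thin complementary region the $(p-1)$-H\"older continuity of $h$ gives $C|z-{\bf e}_1|^{2(p-1)-n-sp}$, whose integral over that region is finite thanks to $(2-s)p>1$. Dominated convergence on $\Omega_1$ then identifies the limit with the principal value of $h(1-|z|^\alpha)/|{\bf e}_1-z|^{n+sp}$ over $\Omega_1$, and summing the three pieces delivers the stated identity. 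Finally, for the finiteness and strict positivity of the right-hand side I would apply to $\psi(\rho):=\rho^\alpha$ the identities and inequalities used in the proof of Lemma \ref{lem-Deltaspf>0}: the monotonicity analogue of \eqref{eq-f1} is immediate from $\alpha<0$, while the analogue of \eqref{eq-f2} reduces to $t^{sp-n-\alpha(p-1)}=t^{-\sigma}>1$ for $t\in(0,1)$, which holds since $\sigma>0$; the exact rearrangement via $\rho=1/t$ used in that proof then exhibits the limit as a strictly positive quantity.
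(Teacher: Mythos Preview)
Your proposal is correct and follows essentially the same approach as the paper: split the domain, subtract the odd linear part on the singular region near ${\bf e}_1$ and apply \eqref{eq-ChenLi57} together with dominated convergence, then use the polar-coordinate inversion $\rho=1/t$ from Lemma~\ref{lem-Deltaspf>0} to extract strict positivity (your reduction of the analogue of \eqref{eq-f2} to $t^{-\sigma}>1$ is exactly the mechanism the paper isolates in \eqref{eq-xx0xx}). The only differences are cosmetic---your $\Omega_2$ isolates the origin while the paper uses annuli around ${\bf e}_1$---and your additional splitting in the case $1<p<2$ by the relative size of $\ell_r$ versus the Taylor remainder is in fact more careful than the paper's direct substitution of upper bounds into $(|A|+|B|)^{p-2}$.
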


\begin{proof}
According to Proposition \ref{prop-ChenLi}, the principle value integral in defining $G(r)$ converges for all $r\in(0,\infty)$.
Let us split the integral domain of $G(r)$ into three parts:
\begin{align*}
\begin{cases}
  \Omega_1: = \{z\in\rn:\, |{\bf e}_1-z|<1/2\};\\
  \Omega_2: = \{z\in\rn:\, 1/2\le|{\bf e}_1-z|\le 2\};\\
  \Omega_3: = \{z\in\rn:\, |{\bf e}_1-z|> 2\}.
    \end{cases}
\end{align*}
As a consequence, we write
\begin{align}\label{eq-Gr}
  G(r) &= \left(\mathrm{p.v.}\int_{{\Omega_1}}+ \int_{{\Omega_2}}+\int_{{\Omega_3}}\right)\frac{|g_r({\bf e}_1)-g_r(z)|^{p-2}(g_r({\bf e}_1)-g_r(z))}
  {|{\bf e}_1-z|^{n+sp}}\,dz.
\end{align}
Since we will consider the limit of $G(r)$ as $r\to\infty$, we may as well assume that $r\ge 1$ in what follows. The subsequent arguments is split into four steps.

\medskip

{\bf  Step 1:\, limit of the principle integral over the domain $\Omega_1$.} As in \eqref{eq-h}, we again use the notation
\begin{align*}
h(t):=|t|^{p-2}t\quad \text{for all}\ \, t\in\rr,
\end{align*}
so that
$$
|g_r({\bf e}_1)-g_r(z)|^{p-2}(g_r({\bf e}_1)-g_r(z))= h\big(g_r({\bf e}_1)-g_r(z)\big)
$$
and
$$
|\big(z-{\bf e}_1\big)\cdot \nabla g_r({\bf e}_1)|^{p-2}\big(z-{\bf e}_1\big)\cdot \nabla g_r({\bf e}_1)= h\big(\big(z-{\bf e}_1\big)\cdot \nabla g_r({\bf e}_1)\big).
$$
 Due to symmetry, it holds that
$$
\mathrm{p.v.}\int_{\Omega_1} \frac{|\big(z-{\bf e}_1\big)\cdot \nabla g_r({\bf e}_1)|^{p-2}\big(z-{\bf e}_1\big)\cdot \nabla g_r({\bf e}_1)}
  {|{\bf e}_1-z|^{n+sp}}\,dz=0,
$$
whence
\begin{align}\label{eq-Omega1}
&\mathrm{p.v.}\int_{{\Omega_1}}\frac{|g_r({\bf e}_1)-g_r(z)|^{p-2}(g_r({\bf e}_1)-g_r(z))}
  {|{\bf e}_1-z|^{n+sp}}\,dz \\
  &\quad=  \int_{{\Omega_1}}
  \frac{h\big(g_r({\bf e}_1)-g_r(z)\big)-h\big(\big(z-{\bf e}_1\big)\cdot \nabla g_r({\bf e}_1)\big)}
  {|{\bf e}_1-z|^{n+sp}}\,dz.\notag
\end{align}
To deal with the numerator
 in the right hand integral of \eqref{eq-Omega1}, we apply \eqref{eq-ChenLi57} with
$$A=\big(z-{\bf e}_1\big)\cdot \nabla g_r({\bf e}_1)$$
and
$$B=g_r(z)-g_r({\bf e}_1)-(z-{\bf e}_1)\cdot \nabla g_r({\bf e}_1).$$
A straightforward calculation shows that for all $z=(z_1,\dots, z_n)\in\rn$ and $i,j\in\{1,2,\dots,n\}$,
$$
\partial_i g_r(z)=-c_{s,p,n,\sigma}\, z_i \left(\frac 1{r^2}+|z|^2\right)^{\frac{sp-n+\sigma}{2(p-1)}-1}
$$
and
$$
\partial_j\partial_i g_r(z)=
c_{s,p,n,\sigma}
\left[\left(c_{s,p,n,\sigma}+2\right) z_iz_j
-\left(\frac 1{r^2}+|z|^2\right)\delta_{ij}\right]\left(\frac 1{r^2}+|z|^2\right)^{\frac{sp-n+\sigma}{2(p-1)}-2},
$$
where
$$c_{s,p,n,\sigma}=\frac{n-sp-\sigma}{p-1}$$
and
$$
\delta_{ij}=
\begin{cases}
 0  & \mbox{as}\ \, i\neq j;\\
 1& \mbox{as}\ \, i=j.
\end{cases}
$$
Since $r\in(1,\infty)$, it follows that $|\nabla g_r({\bf e}_1)|\simeq 1$, which in turn gives
\begin{align}\label{eq-A}
|A|\le|z-{\bf e}_1|\left|\nabla g_r({\bf e}_1)\right|\ls |z-{\bf e}_1|.
\end{align}
Meanwhile,  by using the Taylor expansion of $g_r$ at the point ${\bf e}_1$, we then have
$$
B=g_r(z)-g_r({\bf e}_1)-(z-{\bf e}_1)\cdot \nabla g_r({\bf e}_1) =\frac 12 \sum_{i=1}^n\sum_{j=1}^n (z_i-{\bf e}_{1i})(z_j-{\bf e}_{1j}) \partial_{i}\partial_{j}g_r(\xi),
$$
where $\xi={\bf e}_1+\theta (z-{\bf e}_1)$ for some $\theta\in (0,1)$ and $z_i, {\bf e}_{1i}$ denote the $i$-th entry of $z$ and ${\bf e}_1$, respectively.
For any $z\in \Omega_1$, we have
$$
 |\xi|\ge |{\bf e}_1|-|z-{\bf e}_1|>1/2
$$
and
$$
|\xi|\le |{\bf e}_1|-|z-{\bf e}_1|<2,
$$
which leads to
$$|\partial_{i}\partial_{j}g_r(\xi)|\le C$$
for some constant $C$ depending only on $s,p,n$ and $\sigma$. This further implies
\begin{align}\label{eq-B}
 |B|= \left|g_r(z)-g_r({\bf e}_1)-(z-{\bf e}_1)\cdot \nabla g_r({\bf e}_1)\right|\le \frac {C n^2} 2|z-{\bf e}_1|^2.
\end{align}
From \eqref{eq-ChenLi57}, \eqref{eq-A}, \eqref{eq-B}  and $|z-{\bf e}_1|<1/2$,  it follows that
\begin{align}\label{eq-gr1}
  \left|\frac{h\big(g_r({\bf e}_1)-g_r(z)\big)-h\big(\big(z-{\bf e}_1\big)\cdot \nabla g_r({\bf e}_1)\big)}
  {|{\bf e}_1-z|^{n+sp}}\right|
  & \ls \frac{(|z-{\bf e}_1|+|z-{\bf e}_1|^2)^{p-2} |z-{\bf e}_1|^2}
  {|{\bf e}_1-z|^{n+sp}}\\
  &\ls |z-{\bf e}_1|^{(1-s)p-n} \notag
\end{align}
holds uniformly in $r\in (1,\infty)$ and $z\in\Omega_1$.  Of course,
\begin{align}\label{eq-gr11}
\int_{\Omega_1}|{\bf e}_1-z|^{(1-s)p-n}\,dz=\int_{|w|<1/2} |w|^{(1-s)p-n}\,dw<\infty.
\end{align}
A combination of \eqref{eq-gr1} and \eqref{eq-gr11}
enables us to use
the Lebesgue dominated convergence theorem. So, by \eqref{eq-Omega1}, together with
\begin{align*}
\lim_{r\to\infty}  g_r(z)=|z|^{\frac{sp-n+\sigma}{p-1}}
\end{align*}
and
\begin{align*}
\lim_{r\to\infty} \nabla g_r({\bf e}_1)
= \left(\frac{sp-n+\sigma}{p-1}\right) \left( z \lim_{r\to\infty} \left(\frac 1{r^2}+|z|^2\right)^{\frac{sp-n+\sigma}{2(p-1)}-1}\right)\bigg|_{z={\bf e_1}}
= \left(\frac{sp-n+\sigma}{p-1}\right) {\bf e}_1,
\end{align*}
we
further obtain
\begin{align}\label{eq-Gr1}
&\lim_{r\to\infty}\mathrm{p.v.}\int_{{\Omega_1}}\frac{|g_r({\bf e}_1)-g_r(z)|^{p-2}(g_r({\bf e}_1)-g_r(z))}
  {|{\bf e}_1-z|^{n+sp}}\,dz\\
  &=\lim_{r\to\infty}\int_{\Omega_1}
  \frac{h\big(g_r({\bf e}_1)-g_r(z)\big)-h\big(\big(z-{\bf e}_1\big)\cdot \nabla g_r({\bf e}_1)\big)}
  {|{\bf e}_1-z|^{n+sp}}\,dz\notag\\
  &=\int_{\Omega_1}\lim_{r\to\infty}
  \frac{h\big(g_r({\bf e}_1)-g_r(z)\big)-h\big(\big(z-{\bf e}_1\big)\cdot \nabla g_r({\bf e}_1)\big)}
  {|{\bf e}_1-z|^{n+sp}}\,dz\notag\\
  &=\int_{\Omega_1}\frac{h\big(1-|z|^{\frac{sp-n+\sigma}{p-1}}\big)-h\big(
  (z-{\bf e}_1)\cdot (\frac{sp-n+\sigma}{p-1} {\bf e}_1)
  \big)}
  {|{\bf e}_1-z|^{n+sp}}\,dz\notag\\
  &=\int_{\Omega_1} \frac{|1-|z|^{\frac{sp-n+\sigma}{p-1}}|^{p-2}\big(
  1-|z|^{\frac{sp-n+\sigma}{p-1}}\big)-(\frac{sp-n+\sigma}{p-1})^{p-1}|(z-{\bf e}_1)\cdot {\bf e}_1|^{p-2}(z-{\bf e}_1)\cdot {\bf e}_1}
  {|{\bf e}_1-z|^{n+sp}}\,dz\notag\\
 &=\mathrm{p.v.}\int_{\Omega_1} \frac{|1-|z|^{\frac{sp-n+\sigma}{p-1}}|^{p-2}\big(
  1-|z|^{\frac{sp-n+\sigma}{p-1}}\big)}
  {|{\bf e}_1-z|^{n+sp}}\,dz,\notag
  \end{align}
  where in the last step we used  the fact that
  $$
  \mathrm{p.v.}\int_{\Omega_1}
  \frac{|(z-{\bf e}_1)\cdot {\bf e}_1|^{p-2}(z-{\bf e}_1)\cdot {\bf e}_1}
  {|{\bf e}_1-z|^{n+sp}}\,dz=0.
  $$

\medskip

{\bf  Step 2:\, limit of the integral over the domain $\Omega_2$.}
Note that any $z\in\Omega_2$ satisfies $1/2\le |z-{\bf e}_1|\le 2$ and, hence,
$$
|z|\le |z-{\bf e}_1|+|{\bf e}_1|\le 3.
$$
This, combined with the fact that $sp-n+\sigma<0$ (see  \eqref{eq-sigma}), implies that  for all $r\in(1,\infty)$
and $z\in\Omega_2$,
$$
g_r({\bf e_1}) =\left(\frac 1{r^2}+1\right)^{\frac{sp-n+\sigma}{2(p-1)}}
\le 3^{\frac{n-sp-\sigma}{p-1}} \left(\frac 1{r^2}+|z|^2\right)^{\frac{sp-n+\sigma}{2(p-1)}}
=3^{\frac{n-sp-\sigma}{p-1}} g_r(z).
$$
Thus, for any $z\in\Omega_2$,  we have
\begin{align*}
    \left|\frac{|g_r({\bf e}_1)-g_r(z)|^{p-2}(g_r({\bf e}_1)-g_r(z))}
  {|{\bf e}_1-z|^{n+sp}}\right| \ls g_r(z)^{p-1} \ls |z|^{sp-n+\sigma}.
\end{align*}
Moreover, observe that
\begin{align*}
\int_{\Omega_2}|z|^{sp-n+\sigma}\,dz=\int_{|z|<3} |z|^{sp-n+\sigma}\,dz<\infty.
\end{align*}
Due to the last two estimates, we apply the Lebesgue dominated convergence theorem to derive
\begin{align}\label{eq-Gr2}
&\lim_{r\to\infty}\int_{{\Omega_2}}\frac{|g_r({\bf e}_1)-g_r(z)|^{p-2}(g_r({\bf e}_1)-g_r(z))}
  {|{\bf e}_1-z|^{n+sp}}\,dz\\
  &\quad=\int_{\Omega_2}\lim_{r\to\infty}
  \frac{|g_r({\bf e}_1)-g_r(z)|^{p-2}(g_r({\bf e}_1)-g_r(z))}
  {|{\bf e}_1-z|^{n+sp}}\,dz\notag\\
 &\quad=\int_{\Omega_2} \frac{|1-|z|^{\frac{sp-n+\sigma}{p-1}}|^{p-2}\big(
  1-|z|^{\frac{sp-n+\sigma}{p-1}}\big)}
  {|{\bf e}_1-z|^{n+sp}}\,dz.\notag
  \end{align}

\medskip

{\bf  Step 3:\, limit of the integral over the domain $\Omega_3$.} In this case,  any $z\in\Omega_3$ satisfies
$$|z|\ge |z-{\bf e}_1|-|{\bf e}_1|>1.$$
Moreover, note that
$0<g_r(z)<1$ and $ 0<g_r({\bf e}_1)< 1,$
which gives
\begin{align*}
    \left|\frac{|g_r({\bf e}_1)-g_r(z)|^{p-2}(g_r({\bf e}_1)-g_r(z))}
  {|{\bf e}_1-z|^{n+sp}}\right| \le \frac 1 {|{\bf e}_1-z|^{n+sp}}.
\end{align*}
Further, invoking the fact that
\begin{align*}
\int_{\Omega_3}\frac 1 {|{\bf e}_1-z|^{n+sp}}\,dz=\int_{|w|>2} \frac 1 {|w|^{n+sp}}\,dw<\infty,
\end{align*}
we then apply again the Lebesgue dominated convergence theorem, thereby deriving
\begin{align}\label{eq-Gr3}
&\lim_{r\to\infty}\int_{{\Omega_3}}\frac{|g_r({\bf e}_1)-g_r(z)|^{p-2}(g_r({\bf e}_1)-g_r(z))}
  {|{\bf e}_1-z|^{n+sp}}\,dz\\
  &\quad=\int_{\Omega_3}\lim_{r\to\infty}
  \frac{|g_r({\bf e}_1)-g_r(z)|^{p-2}(g_r({\bf e}_1)-g_r(z))}
  {|{\bf e}_1-z|^{n+sp}}\,dz\notag\\
 &\quad=\int_{\Omega_3} \frac{|1-|z|^{\frac{sp-n+\sigma}{p-1}}|^{p-2}\big(
  1-|z|^{\frac{sp-n+\sigma}{p-1}}\big)}
  {|{\bf e}_1-z|^{n+sp}}\,dz.\notag
  \end{align}

\medskip

{\bf  Step 4:\, positivity of the limit of $G(r)$ when $r\to \infty$.}
Combining \eqref{eq-Gr}-\eqref{eq-Gr1}-\eqref{eq-Gr2}-\eqref{eq-Gr3} yields
\begin{align*}
  \lim_{r\to \infty}G(r)
  =\mathrm{p.v.}\int_\rn \frac{|1-|z|^{\frac{sp-n+\sigma}{p-1}}|^{p-2}(1-|z|^{\frac{sp-n+\sigma}{p-1}})}
  {|{\bf e}_1-z|^{n+sp}}\,dz,
\end{align*}
whose right hand side integral converges to a finite number by terms of \eqref{eq-finite} in
Proposition \ref{prop-ChenLi}. Now, we are left to validate that it is a positive number.
This is almost the same argument as the proof of Lemma \ref{lem-Deltaspf>0}.
Indeed, for any $\ez\in(0,1)$, by using the polar coordinate transformation, we write
\begin{align}\label{eq-xx0}
 &\int_{|{\bf e}_1-z|>\epsilon} \frac{|1-|z|^{\frac{sp-n+\sigma}{p-1}}|^{p-2}(1-|z|^{\frac{sp-n+\sigma}{p-1}})}
  {|{\bf e}_1-z|^{n+sp}}\,dz\\
  &\quad =\int_0^\infty \int_{\gfz{\theta\in\mathbb S^{n-1}}{|{\bf e}_1-\rho\theta|>\epsilon}}
  \frac{|1-\rho^{\frac{sp-n+\sigma}{p-1}}|^{p-2}(1-\rho^{\frac{sp-n+\sigma}{p-1}})}
  {|{\bf e}_1-\rho\theta|^{n+sp}}\rho^{n-1}\,d\sigma(\theta)\,d\rho\notag\\
  &\quad =\left(\int_0^1 \int_{\gfz{\theta\in\mathbb S^{n-1}}{|{\bf e}_1-\rho\theta|>\epsilon}}
  +\int_1^\infty \int_{\gfz{\theta\in\mathbb S^{n-1}}{|{\bf e}_1-\rho\theta|>\epsilon}}\right)
  \frac{|1-\rho^{\frac{sp-n+\sigma}{p-1}}|^{p-2}(1-\rho^{\frac{sp-n+\sigma}{p-1}})}
  {|{\bf e}_1-\rho\theta|^{n+sp}}\rho^{n-1}\,d\sigma(\theta)\,d\rho.\notag
\end{align}
To deal with the second double integral in the right hand side of \eqref{eq-xx0}, note that a change of variables $\rho=1/t$ gives
\begin{align*}
\frac{|1-\rho^{\frac{sp-n+\sigma}{p-1}}|^{p-2}(1-\rho^{\frac{sp-n+\sigma}{p-1}})}
  {|{\bf e}_1-\rho\theta|^{n+sp}}
  &=  \frac{|1-t^{-\frac{sp-n+\sigma}{p-1}}|^{p-2}(1-t^{-\frac{sp-n+\sigma}{p-1}})}
  {|{\bf e}_1-t^{-1}\theta|^{n+sp}}\\
  &= t^{2n-\sigma}\frac{|t^{\frac{sp-n+\sigma}{p-1}}-1|^{p-2}(t^{\frac{sp-n+\sigma}{p-1}}-1)}
  {|t{\bf e}_1-\theta|^{n+sp}}.
\end{align*}
Moreover, invoking the equality $|t{\bf e}_1-\theta|=|{\bf e}_1-t\theta|$ given in \eqref{eq-te1theta},
we then utilize a change of variables $\rho=1/t$ to deduce that
\begin{align*}
  &\int_1^\infty \int_{\gfz{\theta\in\mathbb S^{n-1}}{|{\bf e}_1-\rho\theta|>\epsilon}}
  \frac{|1-\rho^{\frac{sp-n+\sigma}{p-1}}|^{p-2}(1-\rho^{\frac{sp-n+\sigma}{p-1}})}
  {|{\bf e}_1-\rho\theta|^{n+sp}}\rho^{n-1}\,d\sigma(\theta)\,d\rho\\
   &\quad=\int_0^1 \int_{\gfz{\theta\in\mathbb S^{n-1}}{|{\bf e}_1-t^{-1}\theta|>\epsilon}}
  \frac{|t^{\frac{sp-n+\sigma}{p-1}}-1|^{p-2}(t^{\frac{sp-n+\sigma}{p-1}}-1)}
  {|{\bf e}_1-t\theta|^{n+sp}}t^{n-\sigma-1}\,d\sigma(\theta)\,dt.
\end{align*}
Observe that $|{\bf e}_1-t^{-1}\theta|>\epsilon$ in the integral domain can be equivalently written as $|{\bf e}_1-t\theta|>t\epsilon$.
This, combined with  \eqref{eq-xx0}, further induces
\begin{align}\label{eq-xx0xx}
 &\int_{|{\bf e}_1-z|>\epsilon} \frac{|1-|z|^{\frac{sp-n+\sigma}{p-1}}|^{p-2}(1-|z|^{\frac{sp-n+\sigma}{p-1}})}
  {|{\bf e}_1-z|^{n+sp}}\,dz\\
  &\quad = \int_0^1 \int_{\gfz{\theta\in\mathbb S^{n-1}}{\epsilon\ge |{\bf e}_1-t\theta|>t\epsilon}}
  \frac{|t^{\frac{sp-n+\sigma}{p-1}}-1|^{p-2}(t^{\frac{sp-n+\sigma}{p-1}}-1)}
  {|{\bf e}_1-t\theta|^{n+sp}}t^{n-\sigma-1}\,d\sigma(\theta)\,dt\notag\\
  &\qquad + \int_0^1 \int_{\gfz{\theta\in\mathbb S^{n-1}}{|{\bf e}_1-t\theta|>\epsilon}} (t^{-\sigma}-1)
  \frac{|t^{\frac{sp-n+\sigma}{p-1}}-1|^{p-2}(t^{\frac{sp-n+\sigma}{p-1}}-1)}
  {|{\bf e}_1-t\theta|^{n+sp}}t^{n-1}\,d\sigma(\theta)\,dt.\notag
  \end{align}
  Due to $sp-n+\sigma<0$ and $0<t\le 1$, both the two integrals in
  in the right hand side of \eqref{eq-xx0xx} have their integrands being nonnegative, which gives that for any $\epsilon\in(0,1/2)$,
\begin{align*}
  &\int_{|{\bf e}_1-z|>\epsilon} \frac{|1-|z|^{\frac{sp-n+\sigma}{p-1}}|^{p-2}(1-|z|^{\frac{sp-n+\sigma}{p-1}})}
  {|{\bf e}_1-z|^{n+sp}}\,dz\\
  &\quad\ge\int_0^{1/2} \int_{\theta\in\mathbb S^{n-1}} (t^{-\sigma}-1)
  \frac{|t^{\frac{sp-n+\sigma}{p-1}}-1|^{p-2}(t^{\frac{sp-n+\sigma}{p-1}}-1)}
  {|{\bf e}_1-t\theta|^{n+sp}}t^{n-1}\,d\sigma(\theta)\,dt>0,
\end{align*}
  Upon letting $\epsilon\to0$, we obtain
  $$
  \mathrm{p.v.}\int_\rn \frac{|1-|z|^{\frac{sp-n+\sigma}{p-1}}|^{p-2}(1-|z|^{\frac{sp-n+\sigma}{p-1}})}
  {|{\bf e}_1-z|^{n+sp}}\,dz>0,
  $$
  as desired.
\end{proof}

\subsection{Proof of Theorem \ref{thm-Liouville}(iii)}\label{ss5.3}

\begin{proof}[Proof of Theorem \ref{thm-Liouville}(iii)]
Let $f$ be the positive function defined in \eqref{eq-uf}, with the parameter  $\sigma$ therein satisfying \eqref{eq-sigma}. Obviously, it holds that $f\in W_\loc^{s,p}(\rn)$.
Our first aim is to show that
\begin{align}\label{eq-aim}
\lim_{|x|\to\infty} \frac{(-\Delta)^s_pf(x)}{f(x)^q}=+\infty.
\end{align}

Fix $x\in\rn$ such that $x\neq 0$. Set $r=|x|$ and write $x=rA_x{\bf e}_1$, where $A_x$ is a rotation matrix in $\rn$.
Upon writing $y=rA_x z$, we have
\begin{align*}
f(x)-f(y)= r^{\frac{sp-n+\sigma}{p-1}} \left(
\left(\frac 1{r^2}+1\right)^{\frac{sp-n+\sigma}{2(p-1)}}
  -\left(\frac 1{r^2}+|z|^2\right)^{\frac{sp-n+\sigma}{2(p-1)}}\right).
\end{align*}
To simplify the notation, for any $r\in(0,\infty)$, let the function $g_r$ be  as in \eqref{eq-gr}, that is,
$$
g_r(z)=\left(\frac 1{r^2}+|z|^2\right)^{\frac{sp-n+\sigma}{2(p-1)}}\quad \text{for all}\ z\in\rn.
$$
Consequently, we obtain
\begin{align*}
|f(x)-f(y)|^{p-2}(f(x)-f(y))
= r^{sp-n+\sigma} |g_r({\bf e}_1)-g_r(z)|^{p-2}(g_r({\bf e}_1)-g_r(z)),
  \end{align*}
which further implies
\begin{align*}
  (-\Delta)^s_pf(x)
  &=\mathrm{p.v.}\int_{\rn}\frac{|f(x)-f(y)|^{p-2}(f(x)-f(y))}{|x-y|^{n+sp}}\,dy\\
  &=r^{\sigma-n}\, \mathrm{p.v.}
  \int_\rn \frac{|g_r({\bf e}_1)-g_r(z)|^{p-2}(g_r({\bf e}_1)-g_r(z))}
  {|{\bf e}_1-z|^{n+sp}}\,dz\\
  &=r^{\sigma-n}G(r),
\end{align*}
with $G(r)$ being defined as in \eqref{eq-Gr0}.
Now,
\begin{align}\label{eq-aim1}
  \lim_{|x|\to\infty} \frac{(-\Delta)^s_pf(x)}{f(x)^q}
  &=
  \lim_{r\to\infty} \frac{r^{\sigma-n} G(r)}{(1+r^2)^{\frac{q(sp-n+\sigma)}{2(p-1)}}}=
  \lim_{r\to\infty} r^{\sigma-n-\frac{q(sp-n+\sigma)}{p-1}} G(r).
\end{align}
As was proved in Lemma \ref{lem-approx}, if $r\to \infty$, then $G(r)$
converges to a positive real number. However, the choice of $\sigma$ in the first condition of \eqref{eq-sigma} implies that
$$
 \lim_{r\to\infty} r^{\sigma-n-\frac{q(sp-n+\sigma)}{p-1}}=+\infty.
$$
 From this and \eqref{eq-aim1}, it follows that \eqref{eq-aim} holds.

By \eqref{eq-aim}, there exists a large positive number $M$ such that when  $|x|\ge M$,
$$
(-\Delta)^s_pf(x)\ge f(x)^q.
$$
Based on Lemma \ref{lem-Deltaspf>0}, the function $(-\Delta)^s_p f$ is a continuous positive function on $\rn$.
Invoking the positivity and continuity of $f^q$ on $\rn$, it makes sense to define
$$
c_f:=\max\left\{1,\ \max_{|x|\le M}\frac{f(x)^q}{(-\Delta)^s_pf(x)}\right\}.
$$
The previous construction of $f$ implies that
$$
 f(x)^q \le c_f\, (-\Delta)^s_pf(x)
 \ \ \text{ for all }\ x\in\rn.
$$
Now, letting
$$u:= c_f^{\frac 1{p-q-1}} f,$$
we see that such $u$ is a positive  pointwise solution to
$$
 (-\Delta)^s_pu(x)\ge u(x)^q
 \ \ \text{ for all }\ x\in\rn.
$$
This completes the proof of Theorem \ref{thm-Liouville}(iii).
\end{proof}

\medskip

\noindent{\bf Conflict of Interest and Data Availability.}
The author declares that there are no known conflicts of interest associated with this paper. Furthermore, no data are associated with this paper.

\medskip

\noindent{\bf Acknowledgements.\,} The author is very grateful to Professor Yuhua Sun for many helpful discussions on this paper.

\providecommand{\bysame}{\leavevmode\hbox to3em{\hrulefill}\thinspace}

\end{document}